\newcommand*{\claimproofname}{Proof of Claim}
\newenvironment{claimproof}[1][\claimproofname]{\begin{proof}[#1]}{\end{proof}}
\newtheorem{theorem}{Theorem}[section]
\newtheorem{lemma}[theorem]{Lemma}
\newtheorem{corollary}[theorem]{Corollary}
\newtheorem{fact}[theorem]{Fact}
\newtheorem{proposition}[theorem]{Proposition}
\newtheorem{claim}[theorem]{Claim}
\theoremstyle{definition}
\newtheorem{example}[theorem]{Example}
\newtheorem{question}[theorem]{Question}
\newtheorem{remark}[theorem]{Remark}
\newtheorem{definition}[theorem]{Definition}
\def\K{\mathcal K}
\def\bider{\{\cdot,\cdot\}}
\def\id{\operatorname{id}}
\def\Frac{\operatorname{Frac}}
\def\Spec{\operatorname{Spec}}
\def\alg{\operatorname{alg}}
\def\sep{\operatorname{sep}}
\def\Ind#1#2{#1\setbox0=\hbox{$#1x$}\kern\wd0\hbox to 0pt{\hss$#1\mid$\hss}
\lower.9\ht0\hbox to 0pt{\hss$#1\smile$\hss}\kern\wd0}
\def\Notind#1#2{#1\setbox0=\hbox{$#1x$}\kern\wd0\hbox to 0pt{\mathchardef
\nn=12854\hss$#1\nn$\kern1.4\wd0\hss}\hbox to
0pt{\hss$#1\mid$\hss}\lower.9\ht0 \hbox to
0pt{\hss$#1\smile$\hss}\kern\wd0}
\begin{document}

\title[Commutative bidifferential algebra]{Commutative
bidifferential algebra}

\author{Omar Le\'on S\'anchez}
\address{Department of Mathematics, University of Manchester, Oxford Road, Manchester, United Kingdom M13 9PL}
\email{omar.sanchez@manchester.ac.uk}

\author{Rahim Moosa}
\address{Department of Pure Mathematics, University of Waterloo, 200 University Avenue West, Waterloo, Ontario, Canada N2L 3G1}
\email{rmoosa@uwaterloo.ca}

\date{\today}
\thanks{{\em Acknowledgements}: R. Moosa was partially supported by an NSERC Discovery Grant.}
\subjclass[2020]{12H05, 13N15, 16W25, 17B63}
\keywords{biderivation, Poisson bracket, D-variety, Dixmier-Moeglin equivalence}

\begin{abstract}
Motivated by the Poisson Dixmier-Moeglin equivalence problem, a systematic study of commutative unitary rings equipped with a {\em biderivation}, namely a binary operation that is a derivation in each argument, is here begun, with an eye toward the geometry of the corresponding {\em $B$-varieties}.
Foundational results about extending biderivations to localisations, algebraic extensions and transcendental extensions are established.
Resolving a deficiency in Poisson algebraic geometry, a theory of base extension is achieved, and it is shown that dominant $B$-morphisms admit generic $B$-fibres.
A bidifferential version of the Dixmier-Moeglin equivalence problem is articulated.
\end{abstract}

\maketitle

\setcounter{tocdepth}{1}
\tableofcontents

\section{Introduction}

\noindent
In this paper we begin a systematic study of commutative {\em bidifferential} rings; that is, commutative rings equipped with a binary function $\bider:R\times R\to R$ which is a derivation in each argument.
We are primarily interested in the case when $R$ is a finitely generated integral algebra over a field $k$ -- so that we are really talking about affine algebraic varieties equipped with some additional structure, what we call here {\em $B$-varieties}\footnote{In this terminology, and indeed in our general approach, we are informed by the theory of $D$-varieties as introduced by Buium~\cite{Buium}.}.
In fact, the motivating examples are {\em Poisson varieties}, namely when $\bider$ is in addition $k$-bilinear, skew symmetric, and satisfies the Jacobi identity.
But from our point of view, it is more natural to drop these additional assumptions on $\bider$, at least for now.

In order to illustrate our approach, and the benefit of working beyond the Poisson setting, consider the following question: Given a dominant morphism of affine Poisson varieties $\phi:X\to Y$ over a field $k$ (of characteristic zero) can we make sense of the generic fibre of $\phi$ as a Poisson variety?
The generic fibre is obtained by taking a base extension to $L:=k(Y)$, namely $\phi_L:X_L\to Y_L$, and then considering the fibre $\phi_L^{-1}(\alpha)$ where $\alpha\in Y_L(L)$ is a generic point over $k$.
Naively, one might expect that there be canonical Poisson structures on $X_L$ and $Y_L$, that $\phi_L$ will be a Poisson morphism, and that $\phi^{-1}(\alpha)$ will be a Poisson subvariety of $X_L$.
In fact, all three of these expectations fail separately.
One of the main technical accomplishments of this paper (Theorem~\ref{genPfibre} below) is to find a particular bidifferential structure on $X_L$ and $Y_L$, compatible with the original Poisson structures, such that $\phi_L$ is a morphism of $B$-varieties and $\phi_L^{-1}(\alpha)$ is a $B$-subvariety of $X_L$.
The bidifferential structure on $X_L$ is not canonical in the sense that it depends on $\phi$, and not just on the Poisson structure on $X$ and $L$.
Nor is the biderivation we construct necessarily a Poisson bracket.
Nevertheless, by working in the more flexible category of $B$-algebraic geometry we can make good sense of generic fibres of dominant maps.

Our original motivation is the Poisson Dixmier-Moeglin equivalence (PDME) problem, some aspects of which remain open.
See, for example,~\cite{BrownGordon, BLLSM, LLS}.
In our efforts to bring to bear more ideas from differential-algebraic geometry and model theory to this problem, it became clear that fundamental constructions, like base extension and generic fibres, were necessary, and that these in turn required us to move out of the strictly Poisson setting.

From the model theoretic point of view, a central difficulty is that the class of existentially closed bidifferential fields in characteristic zero is not elementary.
This is pointed out in~$\S$\ref{subsect-nomc} below.
So while our intuition from the model theory of differentially closed fields can still serve as a general guide, all the work has to be done algebraically without recourse to the notions and techniques of geometric stability theory.

Bidifferential algebra has certainly been considered before, but rarely for commutative rings.
The preoccupations of the subject as it exists in the literature \cite{bresar2,bresar1,eremita} are rather different than our own, focusing more on classifying and describing biderivations on noncommutative (semi)prime rings, and the results there tend to trivialise in the commutative case.

Here is a plan of the paper.
In~$\S$\ref{sect-bidiffalg} we develop the foundations of the subject.
This includes establishing the basic extension results (to localisations in Theorem~\ref{localisation}, to algebraic extensions in Theorem~\ref{separable}, and to transcendental extension in Theorem~\ref{transcendental}).
It also includes the construction of a biderivation on certain tensor products motivated by the generic fibre problem discussed above -- this is Theorem~\ref{tensor} below.
Then in~$\S$\ref{sect-balggeom} we consider the geometric counterpart of the algebra developed earlier.
In particular we introduce $B$-varieties and show that dominant $B$-morphisms admit generic $B$-fibres.
Finally, in~$\S$\ref{sect-bdme} we articulate the Dixmier-Moeglin Equivalence problem in this setting, as a generalisation of the PDME, and establish some preliminary results (Propositions~\ref{usualimplications}, \ref{fibreimage-lc}, \ref{fibreimage-r}) whose proofs illustrate how commutative bidifferential algebra and the constructions appearing in earlier sections can contribute.

\medskip
\noindent
{\em Rings throughout are unital and commutative.}

\bigskip
\section{Bidifferential algebra}
\label{sect-bidiffalg}

\noindent
A {\em bidifferential ring} is a pair $(R,\bider)$ where $R$ is a (commutative unitary) ring and $\bider:R\times R\to R$ is a function such that for each $r\in R$ both $\{r,\cdot\}$ and $\{\cdot,r\}$, called the associated {\em hamiltonians}, are derivations on $R$.
We call $\bider$ a {\em biderivation}.

Examples abound.
Every Poisson bracket is a biderivation, and these are the main motivating examples.
Another source of examples comes from taking linear combinations of products of derivations: If $\{\delta_1,\dots,\delta_m\}$ and $\{\partial_1,\dots,\partial_m\}$ are derivations on a ring $R$, and $\alpha_1,\dots,\alpha_m\in R$, then
$$\{r,s\}:=\alpha_1\delta_1(r)\;\partial_1(s)+\cdots + \alpha_m\delta_m(r)\;\partial_m(s)$$
defines a biderivation on $R$.

Suppose $(R,\bider)$ is a bidifferential ring.
A {\em bidifferential ideal} of $(R,\bider)$ is an ideal $I\subseteq R$ satisfying $\{R,I\}\subseteq I$ and $\{I,R\}\subseteq I$ -- that is, an ideal that is preserved by all the associated hamiltonians.
A {\em morphism} of bidifferential rings, $\phi:(R,\bider)\to (S,\bider)$, is a ring homomorphism such that $\phi\{a,b\}=\{\phi a,\phi b\}$ for all $a,b\in R$.
The reader can easily verify for themself the first properties of this category; for example, that the kernel of a morphism of bidifferential rings is a bidifferential ideal, and that the quotient of a bidifferential ring by a bidifferential ideal admits a unique biderivation making the quotient map a morphism of bidifferential rings.
But in verifying further properties, especially having to do with extensions of bidifferential rings, it becomes convenient to work in the following more general (if technical) setting.

\bigskip
\subsection{Multisorted biderivations}
We extend the notion of biderivation so that the arguments accept values from different rings and output values in a module.
This will allow us to treat all three sorts individually.

Let $\K$ denote the collection of triples $(R,S,M)$ where $R$ and $S$ are rings and $M$ is an additive group equipped with both an $R$-module and $S$-module structure.
A {\em biderivation on $(R,S,M)$} is then a function $\{\cdot,\cdot\}:R\times S\to M$ such that
\begin{itemize}
\item
$\{r,\cdot\}:S\to M$ is an $M$-valued derivation on $S$, for each $r\in R$, and
\item
$\{\cdot,s\}:R\to M$ is an $M$-valued derivation on $R$, for each $s\in S$.
\end{itemize}
We call $\{r,\cdot\}$ and $\{\cdot,s\}$ the associated {\em hamiltonians}.
We often say $(R,S,M,\{\cdot,\cdot\})$ is a biderivation to mean that $(R,S,M)\in\mathcal K$ and $\{\cdot,\cdot\}$ is a biderivation on $(R,S,M)$.

We recover (one-sorted) bidifferential rings as biderivations on a triple of the form $(R,R,R)$, where both module structures (namely of the third sort over the first and over the second) are obtained by viewing $R$ as a module over itself in the natural way.
In this case we write $(R,\bider)$ instead of $(R,R,R,\bider)$.

An intermediate level of generalisation that arises frequently is when $R=S$ and the two $R$-module structures on $M$ agree.
In that case we say that $\bider$ is an {\em $M$-valued  biderivation on $R$}.
We may write this as $(R,\bider)$ as well, but only when it is clear from context in which module the biderivation takes its values.

By a {\em morphism in $\mathcal K$}, denoted $\phi:(R,S,M)\to(R',S',M')$, we mean a triple $(\phi_1,\phi_2,\phi_3)$ such that
\begin{itemize}
\item
$\phi_1:R\to R'$ and $\phi_2:S\to S'$ are ring homomorphisms, and
\item
$\phi_3:M\to M'$ is a group homomorphism that is both $R$-linear and $S$-linear, in the sense that, for all $x\in M$,
\begin{itemize}
\item
$\phi_3(rx)=\phi_1(r)\phi_3(x)$ for all $r\in R$ and
\item
$\phi_3(sx)=\phi_2(s)\phi_3(x)$ for all $s\in S$.
\end{itemize}
\end{itemize}
Given biderivations on  $(R,S,M)$ and $(R',S',M')$ in $\mathcal K$, we say that a morphism $\phi:(R,S,M)\to(R',S',M')$ is {\em bidifferential} if
$$\phi_3(\{r,s\})=\{\phi_1r,\phi_2s\}$$
for all $r\in R$ and $s\in S$.
We denote this by $\phi:(R,S,M,\{\cdot,\cdot\})\to(R',S',M',\{\cdot,\cdot\})$, and say that the biderivation on $(R',S',M')$ {\em extends} that on $(R,S,M)$.
Note that, whenever we can get away with it, we will use the same symbol to denote the base biderivation and its extension.

The following is the main extension lemma for biderivations.

\begin{proposition}
\label{usederivations2}
Suppose
\begin{itemize}
\item
$\phi:(R,S,M)\to (R,S',M')$ is a morphism in $\K$ with $\phi_1=\id_R$,
\item
$\bider$ is a biderivation on $(R,S,M)$,
\item
$a\in S'$, and
\item
$\delta:R\to M'$ is an $M'$-valued derivation on $R$.
\end{itemize}
Suppose each hamiltonian $\{r,\cdot\}:S\to M$ extends to a unique $M'$-valued derivation on $S'$ taking $a$ to $\delta(r)$.
Then $\bider$ extends to a unique biderivation on $(R,S',M')$ satisfying $\{\cdot,a\}=\delta$.
\end{proposition}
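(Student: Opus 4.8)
The plan is to construct the extension directly from the given data and then verify the biderivation axioms, the only substantive point being that the result is a derivation in its \emph{first} argument. For each $r\in R$ let $D_r\colon S'\to M'$ denote the unique $M'$-valued derivation on $S'$ that extends the hamiltonian $\{r,\cdot\}$ and sends $a$ to $\delta(r)$; this is exactly what the hypothesis provides. Define $\{\cdot,\cdot\}'\colon R\times S'\to M'$ by $\{r,t\}':=D_r(t)$. Then $\{r,\cdot\}'=D_r$ is an $M'$-valued derivation on $S'$ for each $r$, so the second-argument clause of the definition of a biderivation holds at once; and since $D_r$ extends $\{r,\cdot\}$ and $D_r(a)=\delta(r)$, the biderivation $\{\cdot,\cdot\}'$ (once we know it is one) extends $\bider$ and satisfies $\{\cdot,a\}'=\delta$. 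Uniqueness is also immediate: if $\bider''$ is any biderivation on $(R,S',M')$ extending $\bider$ with $\{\cdot,a\}''=\delta$, then, because $\phi_1=\id_R$, each $\{r,\cdot\}''$ is an $M'$-valued derivation on $S'$ extending $\{r,\cdot\}$ and taking $a$ to $\delta(r)$, hence $\{r,\cdot\}''=D_r$ by the uniqueness clause of the hypothesis; thus $\bider''=\{\cdot,\cdot\}'$.

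It remains to show that for each $t\in S'$ the map $r\mapsto D_r(t)$ is an $M'$-valued derivation on $R$, and here is where the work is. Rather than fixing $t$, I would prove the two required identities at the level of the derivations $D_r$ on $S'$ and then read them off pointwise; by the uniqueness in the hypothesis, an identity between $M'$-valued derivations on $S'$ follows once one checks that the two sides extend the same hamiltonian $\{r,\cdot\}$ (i.e.\ agree after precomposition with $\phi_2$) and take the same value at $a$. For additivity: $D_{r_1+r_2}$ and $D_{r_1}+D_{r_2}$ are $M'$-valued derivations on $S'$, they send $a$ to $\delta(r_1+r_2)=\delta(r_1)+\delta(r_2)$, and both extend $\{r_1+r_2,\cdot\}=\{r_1,\cdot\}+\{r_2,\cdot\}$, using additivity of $\{\cdot,s\}$ and of $\phi_3$; hence $D_{r_1+r_2}=D_{r_1}+D_{r_2}$, i.e.\ $\{r_1+r_2,t\}'=\{r_1,t\}'+\{r_2,t\}'$ for all $t$. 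For the Leibniz rule: I would compare $D_{r_1r_2}$ with the map $y\mapsto r_1D_{r_2}(y)+r_2D_{r_1}(y)$ (using the $R$-module structure on $M'$); both are $M'$-valued derivations on $S'$, both send $a$ to $\delta(r_1r_2)=r_1\delta(r_2)+r_2\delta(r_1)$, and both extend $\{r_1r_2,\cdot\}=r_1\{r_2,\cdot\}+r_2\{r_1,\cdot\}$, using the Leibniz rule for $\{\cdot,s\}$ on $R$ and the $R$-linearity of $\phi_3$ (here $\phi_1=\id_R$); hence $D_{r_1r_2}=r_1D_{r_2}+r_2D_{r_1}$, which is the Leibniz rule for $\{\cdot,t\}'$.

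The step I expect to be the main obstacle is the assertion, used just above, that $y\mapsto r_1D_{r_2}(y)+r_2D_{r_1}(y)$ is again an $M'$-valued derivation on $S'$: expanding the Leibniz rule for $D_{r_2}$ and then acting by $r_1\in R$ requires that the $R$- and $S'$-module structures on $M'$ commute, i.e.\ $r(yx)=y(rx)$ for $r\in R$, $y\in S'$, $x\in M'$. This is automatic in the one-sorted situation and whenever the two ring sorts coincide with matching module structures, and it persists in the base-extension triples constructed later, so it should be read as a standing compatibility property of the triples in $\K$. Granting it, everything else is routine bookkeeping with the Leibniz rule and with the additivity and $R$-linearity of $\phi_3$, and no further obstacle arises.
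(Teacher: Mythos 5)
Your proof is correct and follows essentially the same route as the paper's: define $\{r,t\}:=D_r(t)$ and deduce additivity and the Leibniz rule in the first argument from the uniqueness clause of the hypothesis, by checking that both candidate derivations on $S'$ extend the same hamiltonian and take the same value at $a$. Your observation that $r_1D_{r_2}+r_2D_{r_1}$ being a derivation requires the $R$- and $S'$-module structures on $M'$ to commute is a compatibility the paper's proof also uses silently, and it holds in all the triples actually constructed later.
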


\begin{proof}
For each $r\in R$, let $\delta_r:S'\to M'$ denote the (unique) derivation extending $\{r,\cdot\}:S\to M$ and taking $a$ to $\delta(r)$.
Then we have no choice but to define $\bider$ on $(R,S',M')$ by $\{r,s'\}:=\delta_r(s')$.
In particular, uniqueness will follow from existence.
That this does define an extension of $(R,S,M,\bider)$ follows immediately from the fact that each $\delta_r$ extends $\{r,\cdot\}:S\to M$.
So it remains only to show that $(R,S',M',\bider)$ is a biderivation.

Given $r\in R$, we defined $\{r,\cdot\}:S'\to M'$ to agree with $\delta_r$, and hence these are $M'$-valued derivations on $S'$ by assumption.

Given $s'\in S'$, we need to show that $\{\cdot,s'\}:R\to M'$ is a derivation.
That is, for all $r_1,r_2\in R$ we need to show:
\begin{eqnarray*}
\{r_1+r_2,s'\}&=&\{r_1,s'\}+\{r_2,s'\},\ \ \text{ and}\\
\{r_1r_2,s'\}&=&r_2\{r_1,s'\}+r_1\{r_2,s'\}.
\end{eqnarray*}
Fixing $r_1,r_2\in R$ and letting $s'\in S$ vary, this is equivalent to showing that:
\begin{eqnarray}
\label{add}\delta_{r_1+r_2}&=&\delta_{r_1}+\delta_{r_2},\ \ \text{ and}\\
\label{leib}\delta_{r_1r_2}&=&r_2\delta_{r_1}+r_1\delta_{r_2},
\end{eqnarray}
which is what we now do.

For~(\ref{add}), note that both $\delta_{r_1+r_2}$ and $\delta_{r_1}+\delta_{r_2}$ are $M'$-valued derivations on $S'$ that extends $\{r_1+r_2,\cdot\}:S\to M$.
Moreover,
$$\delta_{r_1+r_2}(a)=\delta(r_1+r_2)=\delta(r_1)+\delta(r_2)=\delta_{r_1}(a)+\delta_{r_2}(a)=(\delta_{r_1}+\delta_{r_2})(a).$$
So, by the uniqueness assumption, we must have $\delta_{r_1+r_2}=\delta_{r_1}+\delta_{r_2}$ on $S'$.

For~(\ref{leib}), note that both $\delta_{r_1r_2}$ and $r_1\delta_{r_2}+r_2\delta_{r_1}$ are $M'$-valued derivations on $S'$ that extend $\{r_1r_2,\cdot\}:S\to M$.
Moreover,
$$\delta_{r_1r_2}(a)=\delta(r_1r_2)=r_1\delta(r_2)+r_2\delta(r_1)=r_1\delta_{r_2}(a)+r_2\delta_{r_1}(a)=(r_1\delta_{r_2}+r_2\delta_{r_1})(a).$$
Hence $\delta_{r_1r_2}=r_1\delta_{r_2}+r_2\delta_{r_1}$ follows by uniqueness.
\end{proof}

\begin{remark}
The following observations are worth keeping in mind:
\begin{itemize}
\item[(a)]
Proposition~\ref{usederivations2} is often applied with $a=0$ and $\delta=0$, in which case it simply says that if all the hamiltonians extend uniquely from $S\to M$ to $S'\to M'$ then $\bider$ extends uniquely from $(R,S,M)$ to $(R,S',M')$.
\item[(b)]
Proposition~\ref{usederivations2} is often applied when $S'$ does not admit any nontrivial derivations that are $S$-linear -- the so called {\em $S$-differentially trivial} case.
In that case, one has automatically that each hamiltonian has at most one extension to $S'$ (as the difference of two would be $S$-linear), and the only thing to verify before application is existence.
\item[(c)]
Of course, the proposition also works with the roles of $R$ and $S$ reversed -- that is, we can keep $S$ fixed and extend $R$ to some~$R'$.
\end{itemize}
\end{remark}

\medskip
The multi-sorted generalisation of a bidifferential ideal is the following:

\begin{definition}
A {\em bidifferential ideal pair} of a biderivation $(R,S,M,\bider)$ is a pair of ideals $I\subseteq R$ and $J\subseteq S$ such that $\{I,S\}\subseteq IM$ and $\{R,J\}\subseteq JM$.
\end{definition}

One of the complications in bidifferential algebra is that the extension of a bidifferential ideal need not be bidifferential.
For example:

\begin{example}
\label{nonliftex}
Fix a ring $R$.
It is not hard to construct a biderivation on the polynomial ring in two variables, $R[x,y]$, with the property that $\bider$ is trivial on $R[x]$ but such that $\{x,y\}=x$.
For example, one can use Corollary~\ref{polyringext} below to verify this.
In fact, when $R=k$ is a field, one can do it so that $\bider$ is a Poisson bracket over $k$.
In any case, we thus have an extension of bidifferential rings; $(R[x],0)\to(R[x,y],\bider)$.
The ideal $(x-1)R[x]$ is bidifferential in $(R[x],0)$ as every ideal is, but, since
$\{x-1,y\}=\{x,y\}-\{1,y\}=x$,
 we see that $(x-1)R[x,y]$ is not a bidifferential ideal in the extension $(R[x,y],\bider)$.
\end{example}

We will thus have to restrict our attention to certain extensions of bidifferential rings.
Here is the definition in the general multi-sorted setting:

\begin{definition}[Lifting extensions]
An extension of biderivations,
$$\phi:(R,S,M,\bider)\to(R',S',M',\bider),$$
is said to be {\em lifting} if whenever $(I,J)$ is a bidifferential ideal pair in $(R,S,M)$ then $(IR',JS')$ is a bidifferential ideal pair in $(R',S',M')$.
\end{definition}

We are using the usual commutative algebra conventions here, namely $IR'$ denotes the ideal $\phi_1(I)R'$ of $R'$ generated by the image of $I$ in $R'$, and similarly for $JS':=\phi_2(J)S'$.

The following is a useful criterion for extensions to be lifting.

\begin{proposition}
\label{liftcrit}
Suppose $\phi:(R,S,M,\bider)\to (R,S',M',\bider)$ is a differential morphism with $\phi_1=\id_R$.
If $S'$ differentially trivial\footnote{That is, $S'$ admits no nontrivial $S$-linear derivations -- see Definition~\ref{dtriv}}
 over $S$ then $\phi$ is lifting.
\end{proposition}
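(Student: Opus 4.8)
The plan is to fix a bidifferential ideal pair $(I,J)$ of $(R,S,M,\bider)$ and check directly that the pair $(IR',JS')=(I,JS')$ — here $R'=R$ and $\phi_1=\id_R$, so $IR'=I$ — is a bidifferential ideal pair in $(R,S',M')$; that is, $\{I,S'\}\subseteq IM'$ and $\{R,JS'\}\subseteq (JS')M'$. As Example~\ref{nonliftex} indicates, it is really the first of these inclusions that is at stake, and it is there that differential triviality will be used; the second inclusion will follow from the Leibniz rule alone.

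I would dispatch the inclusion $\{R,JS'\}\subseteq (JS')M'$ first. Since $J$ is an ideal of $S$, the ideal $JS'$ of $S'$ is additively generated by the elements $\sigma\phi_2(b)$ with $\sigma\in S'$ and $b\in J$, so it is enough to control $\{r,\sigma\phi_2(b)\}$ for an arbitrary $r\in R$. Using that $\{r,\cdot\}\colon S'\to M'$ is a derivation,
$$\{r,\sigma\phi_2(b)\}=\sigma\{r,\phi_2(b)\}+\phi_2(b)\{r,\sigma\}.$$
The term $\phi_2(b)\{r,\sigma\}$ lies in $(JS')M'$ since $\phi_2(b)\in\phi_2(J)\subseteq JS'$. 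For the term $\sigma\{r,\phi_2(b)\}$, bidifferentiality of $\phi$ gives $\{r,\phi_2(b)\}=\phi_3(\{r,b\})$, and $\{r,b\}\in\{R,J\}\subseteq JM$ because $(I,J)$ is a bidifferential ideal pair; as $\phi_3$ is $S$-linear, $\phi_3(JM)\subseteq\phi_2(J)M'\subseteq (JS')M'$, and hence $\sigma\{r,\phi_2(b)\}\in (JS')M'$ too. So $\{R,JS'\}\subseteq (JS')M'$, with no appeal to differential triviality.

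For the inclusion $\{I,S'\}\subseteq IM'$, I would fix $r\in I$ and study the hamiltonian $h_r:=\{r,\cdot\}\colon S'\to M'$. On the image of $S$ it is already controlled: for $s\in S$, bidifferentiality gives $h_r(\phi_2 s)=\phi_3(\{r,s\})$, and $\{r,s\}\in\{I,S\}\subseteq IM$, while $\phi_3$ is $R$-linear and $\phi_1=\id_R$, so $h_r(\phi_2 s)\in\phi_3(IM)\subseteq IM'$. Now I would pass to the quotient $S'$-module $M'/IM'$ (using that the $R$- and $S'$-module structures on $M'$ are compatible, so that $IM'$ is an $S'$-submodule) and consider the induced derivation $\overline{h_r}\colon S'\to M'/IM'$. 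By the computation just made, $\overline{h_r}$ vanishes on $\phi_2(S)$; since a derivation vanishing on the image of the base ring is linear over that ring, $\overline{h_r}$ is an $S$-linear derivation on $S'$. As $S'$ is differentially trivial over $S$, it follows that $\overline{h_r}=0$, i.e.\ $h_r(S')\subseteq IM'$. Since $r\in I$ was arbitrary, $\{I,S'\}\subseteq IM'$, and the proof is complete.

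The one genuinely delicate point is the last paragraph: the idea is to absorb the uncontrolled behaviour of $\{r,\cdot\}$ on $S'\setminus\phi_2(S)$ into a derivation valued in the quotient module $M'/IM'$, where it becomes $S$-linear and is therefore killed by differential triviality. I expect this to be the main obstacle to get exactly right — in particular, pinning down the module in which the relevant derivations are valued, and confirming that ``differentially trivial over $S$'' is being used in the form that applies to derivations into an arbitrary $S'$-module. Everything else is bookkeeping with the Leibniz rule and with the $R$- and $S$-linearity of $\phi_3$.
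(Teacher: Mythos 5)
Your proof is correct and follows essentially the same route as the paper: the Leibniz-rule computation for $\{R,JS'\}\subseteq (JS')M'$, and for $\{I,S'\}\subseteq IM'$ the passage to the induced derivation $S'\to M'/IM'$, which vanishes on $\phi_2(S)$, is therefore $S$-linear, and is killed by differential triviality (which, per Definition~\ref{dtriv}, does apply to derivations valued in an arbitrary module, as you hoped). The ``delicate point'' you flag is exactly the paper's argument.
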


\begin{proof}
Fix a bidifferential ideal pair $(I,J)$ in $(R,S,M)$.
We need to show that $(I,JS')$ is bidifferential in $(R,S',M')$.

Let $JM':=(\phi_2(J)S')\cdot M'$, for short.
We first show that $\{R,JS'\}\subseteq JM'$.
By additivity, it suffices to verify that $\{r, xs'\}\in JM'$ for all $r\in R, x\in J$, and $s'\in S'$.
This follows from the following computation:
\begin{eqnarray*}
\{r, xs'\}
&=&
\{r, \phi_2(x)s'\}\\
&=&
\phi_2(x)\{r,s'\}+s'\{r,\phi_2(x)\}\\
&=&
\phi_2(x)\{r,s'\}+s'\phi_3\{r,x\}.
\end{eqnarray*}
Now, $\phi_2(x)\{r,s'\}$ is visibly in $JM'$.
For the second term, note that, as $(I,J)$ is a bidifferential pair, $\{r,x\}\in JM$, and hence $\phi_3\{r,x\}\in JM'$.
So $\{r,xs'\}\in JM'$.

It remains to prove that $\{I,S'\}\subseteq IM'$.
This is where differential triviality is used.
Fix $x\in I$ and consider the derivation $D:=\pi\circ\{x,\cdot\}:S'\to M'/IM'$.
Here $\pi:M'\to M'/IM'$ is the quotient map.
We check that $D$ is $S$-linear.
Indeed, for any $s\in S$, we have
\begin{eqnarray*}
D(\phi_2(s))
&=&
\pi\{x,\phi_2(s)\}\\
&=&
\pi\phi_3\{x,s\}\\
&=&
0\ \ \ \ \text{ as $\{x,s\}\in \{I,S\}\subseteq IM$.}
\end{eqnarray*}
So by differential triviality, $D=0$.
But this says precisely that $\{x,S'\}\subseteq IM'$.
As $x\in I$ was arbitrary, we have $\{I,S'\}\subseteq IM'$.
\end{proof}

\bigskip
\subsection{Extending bidifferential rings}
We can now prove the basic results about extensions of bidifferential rings.
We start with observing that differential triviality ensures uniqueness of extensions.

\begin{lemma}
\label{mdt}
Suppose $R$ is an $A$-algebra and $M$ is an $R$-module.
If $R$ admits no nontrivial $M$-valued $A$-linear derivations then every $M$-valued biderivation on $A$ has at most one extension to $R$.
\end{lemma}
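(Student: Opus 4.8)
The plan is to take two extensions, form their difference, and kill it by invoking differential triviality in each of the two arguments in turn.

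Write $\bider$ for the given $M$-valued biderivation on $A$, let $\iota\colon A\to R$ denote the structure map, and suppose $\bider_1$ and $\bider_2$ are $M$-valued biderivations on $R$, each extending $\bider$. Since the $M$-valued derivations on $R$ form an additive group, so do the $M$-valued biderivations on $R$; hence $D:=\bider_1-\bider_2$ is again an $M$-valued biderivation on $R$, and it satisfies $D(\iota a,\iota b)=0$ for all $a,b\in A$ (this is all the extension hypothesis gives, but it is all we need). The goal is to deduce $D=0$.

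First I fix $a\in A$ and consider the hamiltonian $D(\iota a,\cdot)\colon R\to M$, which is an $M$-valued derivation on $R$. It vanishes on $\iota(A)$, since $D(\iota a,\iota b)=0$ for every $b\in A$, and by the Leibniz rule a derivation $R\to M$ that vanishes on $\iota(A)$ is precisely an $A$-linear derivation. So the hypothesis that $R$ admits no nontrivial $M$-valued $A$-linear derivations forces $D(\iota a,\cdot)=0$; as $a$ was arbitrary, $D(\iota a,s)=0$ for all $a\in A$ and $s\in R$. Now I fix $s\in R$ and consider the other hamiltonian $D(\cdot,s)\colon R\to M$, again an $M$-valued derivation on $R$. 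By the previous step it vanishes on $\iota(A)$, so it too is $A$-linear, hence zero by hypothesis. Since $s\in R$ was arbitrary, $D\equiv 0$, that is, $\bider_1=\bider_2$.

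I do not anticipate any serious obstacle: the whole content is that differential triviality can be applied one slot at a time, and the only points requiring (routine) verification are that the difference of two biderivations is a biderivation and that an $M$-valued derivation vanishing on $\iota(A)$ is the same thing as an $A$-linear one. One minor subtlety worth flagging is the bookkeeping when $\iota$ is not injective, but this causes no trouble: if $\bider$ has any extension at all then $\bider$ necessarily factors through $\iota(A)\times\iota(A)$, and the argument above goes through verbatim with $\iota(A)$ in place of $A$.
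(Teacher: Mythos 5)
Your proof is correct and takes essentially the same route as the paper's: both arguments rest on the observation that a difference of hamiltonians vanishing on (the image of) $A$ is an $A$-linear derivation, and then apply differential triviality once in each argument in turn. The paper phrases this contrapositively, by contradiction, rather than by directly forming and killing the difference biderivation $D$, but the content is identical.
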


\begin{proof}
Suppose, toward a contradiction, that $(R,R,M,\bider_1)$ and $(R,R,M,\bider_2)$ are distinct extensions of $(A,A,M,\bider)$.
Then there exists $r\in R$ such that the hamiltonians $\delta_1:=\{r,\cdot\}_1$ and $\delta_2:=\{r,\cdot\}_2$ are distinct $M$-valued derivations on $R$.
If $\delta_1$ and $\delta_2$ agreed on $A$ then $\delta_1-\delta_2$ would be an $A$-linear $M$-valued derivation on $R$, and differential triviality would force $\delta_1=\delta_2$.
Hence, there is $a\in A$ such that $\delta_1(a)\neq\delta_2(a)$.
But this means that $d_1:=\{\cdot,a\}_1$ and $d_2:=\{\cdot,a\}_2$ are distinct $M$-valued derivations on $R$.
But $d_1$ and $d_2$ do agree on $A$ because $\bider_1$ and $\bider_2$ both extend $\bider$ and $a\in A$.
So $d_1-d_2$ is a nontrivial $A$-linear $M$-valued derivation on $R$, contradicting our assumption.
\end{proof}

\begin{theorem}[Localisation]
\label{localisation}
Suppose $\bider$ is an $M$-valued biderivation on $R$, and $\Sigma\subseteq R\setminus\{0\}$ is a multiplicatively closed set.
Then there is a unique extension of $\bider$ to an $\Sigma^{-1}M$-valued biderivation on $\Sigma^{-1}R$.
Moreover, this is a lifting extension.
\end{theorem}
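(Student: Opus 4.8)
The plan is to build the extension in two stages via Proposition~\ref{usederivations2}, to read off uniqueness from a differential‑triviality computation through Lemma~\ref{mdt}, and to verify the lifting property by a short calculation with the quotient rule. Throughout I would use the classical fact that a derivation $\delta\colon R\to N$ into an $R$-module $N$ extends uniquely to a derivation $\Sigma^{-1}R\to\Sigma^{-1}N$, necessarily given by $\delta(r/s)=\bigl(s\,\delta(r)-r\,\delta(s)\bigr)/s^{2}$; when $N$ already carries a $\Sigma^{-1}R$-module structure this produces a unique derivation $\Sigma^{-1}R\to N$.

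\emph{Existence.} First extend the second argument. Consider the morphism $(R,R,M)\to(R,\Sigma^{-1}R,\Sigma^{-1}M)$ in $\K$ whose first component is $\id_{R}$ and whose remaining components are the localisation maps. Each hamiltonian $\{r,\cdot\}\colon R\to M$ extends uniquely to a $\Sigma^{-1}M$-valued derivation on $\Sigma^{-1}R$ by the classical fact, so by Proposition~\ref{usederivations2} (with $a=0$ and $\delta=0$) $\bider$ extends uniquely to a biderivation on $(R,\Sigma^{-1}R,\Sigma^{-1}M)$. Now extend the first argument, using the form of Proposition~\ref{usederivations2} with the two arguments interchanged, applied to $(R,\Sigma^{-1}R,\Sigma^{-1}M)\to(\Sigma^{-1}R,\Sigma^{-1}R,\Sigma^{-1}M)$: each hamiltonian $\{\cdot,b\}\colon R\to\Sigma^{-1}M$ is a $\Sigma^{-1}M$-valued derivation on $R$ and so again extends uniquely to $\Sigma^{-1}R$. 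Since both $\Sigma^{-1}R$-module structures on $\Sigma^{-1}M$ that occur are the canonical one, the resulting biderivation on $(\Sigma^{-1}R,\Sigma^{-1}R,\Sigma^{-1}M)$ is genuinely an $\Sigma^{-1}M$-valued biderivation on $\Sigma^{-1}R$, and the composite of the two localisation morphisms exhibits it as an extension of $\bider$.

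\emph{Uniqueness.} I would apply Lemma~\ref{mdt} with base ring $R$, extension ring $\Sigma^{-1}R$, and module $\Sigma^{-1}M$; it then suffices to check that $\Sigma^{-1}R$ admits no nontrivial $R$-linear $\Sigma^{-1}M$-valued derivation $D$. By $R$-linearity and the Leibniz rule, $D$ vanishes on the image of $R$; then for $s\in\Sigma$ the identity $0=D(1)=s\,D(1/s)$ forces $D(1/s)=0$, since $s$ acts invertibly on $\Sigma^{-1}M$; and hence $D(r/s)=r\,D(1/s)+s^{-1}D(r)=0$. Thus $D=0$, so Lemma~\ref{mdt} gives at most one extension.

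\emph{Lifting.} Let $(I,J)$ be a bidifferential ideal pair in $(R,R,M)$. By symmetry it is enough to show $\{I\Sigma^{-1}R,\Sigma^{-1}R\}\subseteq(I\Sigma^{-1}R)(\Sigma^{-1}M)$, and here $(I\Sigma^{-1}R)(\Sigma^{-1}M)=\Sigma^{-1}(IM)$. A typical generator, with $i\in I$, $r\in R$ and $s,t\in\Sigma$, expands by the quotient rule in each slot to
\[
\{i/s,\,r/t\}=\frac{1}{s}\bigl(\tfrac{1}{t}\{i,r\}-\tfrac{r}{t^{2}}\{i,t\}\bigr)-\frac{i}{s^{2}}\{s,\,r/t\}.
\]
The first summand lies in $\Sigma^{-1}(IM)$ because $\{i,r\},\{i,t\}\in\{I,R\}\subseteq IM$, and the second lies in $\Sigma^{-1}(IM)=(I\Sigma^{-1}R)(\Sigma^{-1}M)$ because of the factor $i/s^{2}\in I\Sigma^{-1}R$; hence the extension is lifting. (Alternatively one may factor it through $(R,\Sigma^{-1}R,\Sigma^{-1}M)$ and invoke Proposition~\ref{liftcrit} and its symmetric counterpart, using that $\Sigma^{-1}R$ is differentially trivial over $R$ by the computation above and that a composite of lifting extensions is lifting.) There is no deep obstacle here: the only real care needed is the multi‑sorted bookkeeping---tracking which ring acts on $\Sigma^{-1}M$ through which homomorphism---and noticing that differential triviality of $\Sigma^{-1}R$ over $R$, on which both the uniqueness and the alternative lifting argument rest, holds precisely because every element of $\Sigma$ already acts invertibly on $\Sigma^{-1}M$.
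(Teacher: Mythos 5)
Your proof is correct and follows essentially the same route as the paper: two applications of Proposition~\ref{usederivations2} (with $a=0$, $\delta=0$) to extend one argument at a time, uniqueness from Lemma~\ref{mdt} via the quotient-rule differential triviality of $\Sigma^{-1}R$ over $R$, and the lifting property via Proposition~\ref{liftcrit} (your direct computation with generators $i/s$ is a harmless, correct alternative to the paper's appeal to that proposition). No gaps.
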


\begin{proof}
By the quotient rule, $\Sigma^{-1}R$ is differentially trivial over $R$, and hence uniqueness will follow automatically by Lemma~\ref{mdt}.
On the other hand, every $M$-valued derivation on $R$ does extend to a $\Sigma^{-1}M$-valued derivation on $\Sigma^{-1}R$.
So we can apply Proposition~\ref{usederivations2} with $(a=0,\delta=0)$ twice, once to extend from $(R,R,M,\bider)$ to $(R,\Sigma^{-1}R,\Sigma^{-1}M)$, and then to extend to $(\Sigma^{-1}R,\Sigma^{-1}R,\Sigma^{-1}M)$.
This is lifting because, by differential triviality, Proposition~\ref{liftcrit} applies to both.
\end{proof}

\begin{corollary}
\label{fractionext}
A biderivation on an integral domain extends uniquely to the fraction field.
\end{corollary}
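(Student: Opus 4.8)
The plan is simply to specialise Theorem~\ref{localisation} to the multiplicative set of all nonzero elements. Let $(R,\bider)$ be a bidifferential integral domain, viewed as usual as an $R$-valued biderivation on $R$. Since $R$ is a domain, the set $\Sigma:=R\setminus\{0\}$ is multiplicatively closed, so Theorem~\ref{localisation} applies and produces a unique extension of $\bider$ to a $\Sigma^{-1}M$-valued biderivation on $\Sigma^{-1}R$.

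The one point to check is that this lands back in the one-sorted world. When $M=R$ we have $\Sigma^{-1}M=\Sigma^{-1}R=\Frac(R)$, and the two $\Sigma^{-1}R$-module structures on this localised module are both just multiplication in the field $\Frac(R)$; hence the object produced is precisely a biderivation on the field $\Frac(R)$, i.e.\ a bidifferential ring $(\Frac(R),\bider)$ extending $(R,\bider)$. The uniqueness of such an extension is exactly the uniqueness asserted in Theorem~\ref{localisation}, read through this identification.

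I do not anticipate any real obstacle here: the content is entirely contained in Theorem~\ref{localisation}, and the corollary is just the translation of that statement into the language of ordinary bidifferential rings, in the case where the base is a domain and the multiplicative set is taken as large as possible. (If one wished to avoid invoking Theorem~\ref{localisation} outright, one could instead run Proposition~\ref{usederivations2} twice with $a=0$ and $\delta=0$, using the quotient rule both to extend each hamiltonian uniquely from $R$ to $\Frac(R)$ and to observe that $\Frac(R)$ is differentially trivial over $R$; but citing the theorem is cleaner.)
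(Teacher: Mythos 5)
Your proposal is correct and is exactly the paper's proof: apply Theorem~\ref{localisation} with $M=R$ and $\Sigma=R\setminus\{0\}$. The extra remark identifying $\Sigma^{-1}M=\Sigma^{-1}R=\Frac(R)$ so that the result is a genuine one-sorted bidifferential ring is a sensible (if routine) elaboration that the paper leaves implicit.
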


\begin{proof}
Apply Theorem~\ref{localisation} to $M=R$ and $\Sigma=R\setminus\{0\}$.
\end{proof}

Next we wish to show that we can lift biderivations uniquely to (separably) algebraic extensions.
We articulate a rather more general, though somewhat technical, version that allows the base of an algebra to be extended in this way.
The greater level of generality is in fact necessary for what we do in the next section.

\begin{theorem}[Algebraic base extensions]
\label{separable}
Suppose $A\subseteq B$ is an extension of integral domains, $b\in B$ is separably algebraic over $\Frac(A)$, and $\bider$ is a biderivation on an $A$-algebra~$R$ with values in $R\otimes_AB$.
Then there is a nonzero $f\in A[b]$ such that $\bider$ extends uniquely to a biderivation on $R\otimes_AA[b]_f$ with values in~$R\otimes_AB_f$.
Moreover, this is a lifting extension.
\end{theorem}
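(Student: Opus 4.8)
The plan is to construct the extension along a short chain of steps, alternating the localisation theorem (Theorem~\ref{localisation}) with the main extension lemma (Proposition~\ref{usederivations2}), and to keep track of the lifting property along the chain.

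First I would fix the arithmetic data. Let $K=\Frac(A)$ and let $p(T)=\sum_i a_iT^i\in A[T]$ be a denominator-clearing of the minimal polynomial of $b$ over $K$, so that $p(b)=0$ and, by separability of $b$, also $p'(b)\neq 0$ in $B$. Let $c$ be the leading coefficient of $p$ and set $f:=c\cdot p'(b)\in A[b]$. Then $c$ and $p'(b)$ are units in $B_f$, so $1\otimes p'(b)$ is a unit in $R\otimes_A B_f$; this invertibility is the engine of the whole argument. Writing $A_c:=A[1/c]$ and $R_c:=R\otimes_A A_c$ (the localisation of $R$ at the powers of the image of $c$; should this be the zero ring, then so is $R\otimes_A A[b]_f$ and there is nothing to prove, so assume otherwise), the polynomial $p/c$ is monic, hence $A_c[T]/(p)$ is free of rank $\deg p$ over the domain $A_c$ and therefore embeds into $K[T]/(p)=K(b)$, identifying it with $A_c[b]=A[b]_c$. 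Consequently
$$R\otimes_A A[b]_f=\big(R_c[T]/(p)\big)\big[1/p'(b)\big],\qquad (R\otimes_A B)\otimes_A A_c=R\otimes_A B_c.$$

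Now for the construction. (a) By Theorem~\ref{localisation}, $\bider$ extends uniquely, and liftingly, to an $(R\otimes_A B_c)$-valued biderivation on $R_c$. (b) Keeping the first argument equal to $R_c$, I extend the second argument to $R_c[T]/(p)$ by invoking Proposition~\ref{usederivations2} with $a=b$, target module $M':=R\otimes_A B_f$, and $\delta\colon R_c\to M'$ given by $\delta(r):=-p'(b)^{-1}\sum_i\{r,a_i\}b^i$. One checks that $\delta$ is a derivation (it is a fixed scalar times a finite sum of hamiltonians, each twisted by a power of $b$), and that each hamiltonian $\{r,\cdot\}\colon R_c\to R\otimes_A B_c$ extends to a unique $M'$-valued derivation on $R_c[T]/(p)$ sending $b$ to $\delta(r)$: uniqueness because $R_c[T]/(p)$ is generated over $R_c$ by $b$, and existence because the sole obstruction, got by applying a putative extension to $p(b)=0$, is the equation $p'(b)\cdot\{r,b\}=-\sum_i\{r,a_i\}b^i$, which $\delta(r)$ solves precisely because $p'(b)$ is invertible in $M'$. (c) Symmetrically -- using that Proposition~\ref{usederivations2} applies with the two arguments interchanged -- I extend the first argument to $R_c[T]/(p)$, obtaining an $(R\otimes_A B_f)$-valued biderivation on $R_c[T]/(p)$. (d) Finally, by Theorem~\ref{localisation}, this extends uniquely and liftingly to $(R_c[T]/(p))[1/p'(b)]=R\otimes_A A[b]_f$, the module of values being unchanged since $p'(b)$ is already a unit in $R\otimes_A B_f$. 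That is the required biderivation, and the matching of the two module structures along the way is routine.

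For overall uniqueness, let $\bider'$ be any extension of $\bider$ to $R\otimes_A A[b]_f$ with values in $R\otimes_A B_f$. Walking up the chain $R\to R_c\to R_c[T]/(p)\to R\otimes_A A[b]_f$ one finds every hamiltonian forced: its restriction along the two localisation steps is dictated by the quotient rule, and its value at $b$ is dictated by differentiating $p(b)=0$ and dividing by the unit $p'(b)$, exactly as in step~(b); hence $\bider'=\bider$. As for lifting, steps~(a) and~(d) are lifting by Theorem~\ref{localisation}, and a composition of lifting extensions is lifting, so only steps~(b) and~(c) remain. These are not covered by Proposition~\ref{liftcrit} verbatim, since $R_c[T]/(p)$ need not be differentially trivial over $R_c$; but one can rerun the proof of that proposition. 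Given a bidifferential ideal pair $(I,J)$ of the biderivation on $R_c$, the half of the argument not appealing to differential triviality goes through unchanged, and for the other half one forms, for $x\in I$, the $R_c$-linear derivation $D:=\pi\circ\{x,\cdot\}\colon R_c[T]/(p)\to M'/IM'$; applying $D$ to $p(b)=0$ gives $p'(b)\,D(b)=0$, and since $p'(b)$ acts invertibly on $M'$ hence on $M'/IM'$ we get $D(b)=0$, so $D=0$ because $R_c[T]/(p)$ is generated over $R_c$ by $b$; this is exactly $\{I,R_c[T]/(p)\}\subseteq IM'$. Thus the whole extension is lifting. The step I expect to be the real obstacle is the pair (b)--(c): one must simultaneously realise $R\otimes_A A[b]_c$ as a polynomial quotient $R_c[T]/(p)$ over which $b$ can be freely adjoined to derivations, and secure the lifting property there, where the naive use of Proposition~\ref{liftcrit} fails for want of differential triviality and must be replaced by the invertibility of $p'(b)$ in the module of values.
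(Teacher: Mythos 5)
Your proof is correct and follows essentially the same route as the paper's: localise at the leading coefficient to make the minimal polynomial monic, extend the second and then the first argument via Proposition~\ref{usederivations2} with the value at $b$ forced by differentiating $p(b)=0$ (this is the content of Fact~\ref{alg-derivation} in the paper, which you have inlined as the explicit formula for $\delta$), then localise at $p'(b)$, with uniqueness coming from the fact that every hamiltonian is forced (Lemma~\ref{mdt}). The only genuine divergence is the lifting verification: you rerun the proof of Proposition~\ref{liftcrit} at the intermediate step $R_c\to R_c[T]/(p)$, substituting invertibility of $p'(b)$ in the module of values for differential triviality, whereas the paper sidesteps this by applying Proposition~\ref{liftcrit} only to the composite extensions ending at $R\otimes_AA[b]_f$, which \emph{is} differentially trivial over $R$ because $f$ is inverted in the ring itself --- so your extra work there, while correct, is avoidable.
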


\begin{proof}
Note that in the statement the $R$-module structure on $R\otimes_AB$, and the $(R\otimes_AA[b])$-module structure on $R\otimes_AB_f$, are the natural ones.

The first thing we do is reduce to the case when the minimal polynomial of $b$ over $\Frac(A)$ has coefficients in $A$.
Indeed, let $P\in A[t]$ be a nonzero polynomial of minimal degree such that $P(b)=0$, and let $a$ be the leading coefficient of~$P$.
By Theorem~\ref{localisation} we have a unique extension of $\bider$ to the localisation $R_a=R\otimes_AA_a$ with values in $(R\otimes_AB)_a=R_a\otimes_{A_a}B_a$.
Now, the minimal polynomial of $b$ over $\Frac(A_a)=\Frac(A)$, namely $\frac{1}{a}P$, does have coefficients in $A_a$.
So, assuming we have proved the proposition in this case, we would get a nonzero $g\in A_a[b]$ and a unique extension of $\bider$ to
$R_a\otimes_{A_a}A_a[b]_g=R\otimes_AA[b]_{ag}$
with values in $R_a\otimes_{A_a}B_{ag}=R\otimes_AB_{ag}$.
Moreover this extension would be a lifting extension of the original $\bider$.
Hence $f:=ag$ would witness the truth of the proposition.

So we may assume that the minimal polynomial of $b$ over $\Frac(A)$ has coefficients in $A$.
We denote this minimal polynomial by $P$ and set $f:=\frac{dP}{dt}(b)$.
By separability, $f$ is a nonzero element of $A[b]$.
It is pointed out in the appendix, as Fact~\ref{alg-derivation}, that under these conditions every derivation $d:A\to S$ (where $S$ is any $B$-algebra) extends uniquely to $d:A[b]\to S_f$.
Combining this with the fact that one can naturally take tensor products of derivations (see Fact~\ref{tensor-derivation}), we get that every derivation $d:R\to R\otimes_AB$ extends uniquely to a derivation $d:R\otimes_AA[b]\to R\otimes_AB_f$.
Now, applying this to each each hamiltonian $\{r,\cdot\}$, viewed as $R\otimes_AB$-valued, we see that Proposition~\ref{usederivations2} applies (with $a=0$ and $\delta=0$).
We obtain an extension of $\bider$ to $(R,R\otimes_AA[b],R\otimes_AB_f)$.
Next, applying the same fact to the hamiltonians $\{\cdot,s\}$ for each $s\in R\otimes_AA[b]$, we can invoke Proposition~\ref{usederivations2} again, this time to the first argument, so as to obtain a further extension to $(R\otimes_AA[b],R\otimes_AA[b],R\otimes_AB_f)$.
It then extends to $(R\otimes_AA[b]_f,R\otimes_AA[b]_f,R\otimes_AB_f)$ by Theorem~\ref{localisation}.

Note that if $d$ is an $A$-linear derivation on $A[b]_f$, with values in any $A[b]_f$-module~$M$, then $P(b)=0$ forces $0=\frac{dP}{dt}(b)db=fdb$, and hence $db=0$ by multiplying through by $\frac{1}{f}$.
So $A[b]_f$ is differentially trivial over $A$, and it follows that $R\otimes_AA[b]_f$ is differentially trivial over $R$.
By Lemma~\ref{mdt}, the extension we have built is therefore unique.
Also, applying Proposition~\ref{liftcrit} to
$$(R,R,R\otimes_AB,\bider)\to (R,R\otimes_AA[b]_f,R\otimes_AB_f,\bider)$$
and then to
$$(R,R\otimes_AA[b]_f,R\otimes_AB_f,\bider)\to(R\otimes_AA[b]_f,R\otimes_AA[b]_f,R\otimes_AB_f,\bider)$$
shows that the extension we have built is lifting.
\end{proof}

\begin{remark}
\label{whatf}
The proposition will only be used when $R\otimes_AB$ has no $A$-torsion.
It is not hard to show that in that case, in order to extend the hamiltonions to $R\otimes_AA[b]$, one does not require that the minimal polynomial of~$b$ over $\Frac(A)$ has coefficients in $A$.
(See the proof of Fact~\ref{alg-derivation}.)
Hence, assuming that $R\otimes_AB$ has no $A$-torsion, we can, in Theorem~\ref{separable},  take $f$ to be $\frac{dP}{dt}(b)$ where $P\in A[t]$ is a polynomial of minimal degree such that $P(b)=0$.
\end{remark}

\begin{corollary}
\label{sepcl}
A biderivation on a field extends uniquely to the separable closure.
\end{corollary}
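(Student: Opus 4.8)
The plan is to build the extension on $k^{\sep}$ as a limit of extensions to the finite separable subextensions of $k$, using the primitive element theorem together with Theorem~\ref{separable} at the finite level, and to get both the needed compatibility of these extensions and the global uniqueness from differential triviality via Lemma~\ref{mdt}.

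First I would record the differential triviality input. For any element $a$ of a separably algebraic extension of $k$, with minimal polynomial $P\in k[t]$ over $k$, separability gives $\frac{dP}{dt}(a)\neq 0$. Consequently, if $k\subseteq E\subseteq F$ are fields with $E/k$ separably algebraic, then any $k$-linear derivation $d\colon E\to F$ satisfies $0=d(P(a))=\frac{dP}{dt}(a)\,d(a)$ for each $a\in E$, and since $\frac{dP}{dt}(a)$ is a unit of the field $F$ we conclude $d(a)=0$, so $d=0$. Taking $F=E=k^{\sep}$ shows that $k^{\sep}$ admits no nontrivial $k^{\sep}$-valued $k$-linear derivation, so by Lemma~\ref{mdt} the biderivation $\bider$ has at most one extension to a biderivation on $k^{\sep}$; this disposes of uniqueness once and for all.

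For existence at the finite level, let $E/k$ be a finite separable extension and write $E=k(b)$ with $b$ separably algebraic over $k$, via the primitive element theorem. Apply Theorem~\ref{separable} with $A=k$, $B=E$, $R=k$, and $\bider$ regarded as a biderivation on $k$ valued in $R\otimes_AB=k\otimes_kE=E$ through the inclusion $k\hookrightarrow E$. Since $k[b]=k(b)=E$ is already a field, the element $f$ supplied by the theorem is a unit, so $R\otimes_AA[b]_f=E$ and $R\otimes_AB_f=E$, and we obtain a biderivation $\bider_E$ on $E$ valued in $E$ restricting to $\bider$ on $k$, that is, a bidifferential ring structure on $E$ extending $(k,\bider)$.

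The main point, and the one step that genuinely needs care, is to glue the $\bider_E$ together. Given finite separable extensions $E\subseteq F$ of $k$, both the restriction of $\bider_F$ to $E$ and the composition of $\bider_E$ with $E\hookrightarrow F$ are $F$-valued biderivations on $E$ restricting to $\bider$ on $k$; by Lemma~\ref{mdt} applied with $R=E$ and module $M=F$ — using the triviality noted above, which is exactly what guarantees the values land back in $E$ — such an extension is unique, so the restriction of $\bider_F$ to $E$ equals $\bider_E$ and in particular is $E$-valued. Now every finite subset of $k^{\sep}$ generates a finite separable subextension, and any two such subextensions lie in a common one, so one may define $\{a,b\}:=\{a,b\}_E$ for $a,b\in k^{\sep}$ by choosing any finite separable subextension $E\ni a,b$; the compatibility just established makes this independent of $E$. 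Choosing, for any finite set of elements appearing in a given identity, a single $E$ containing them all, the derivation laws for $\bider_E$ transfer verbatim, so each hamiltonian $\{r,\cdot\}$ and $\{\cdot,s\}$ is a derivation on $k^{\sep}$; and taking $E=k$ shows the resulting biderivation restricts to $\bider$. Together with the uniqueness from the second paragraph, this proves the corollary.
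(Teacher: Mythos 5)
Your proof is correct, and it is essentially a fleshed-out version of the first argument the paper sketches: recursive (here, directed-limit) applications of Theorem~\ref{separable}, with the compatibility of the finite-level extensions and the global uniqueness both coming from differential triviality of separable extensions via Lemma~\ref{mdt}. The only difference is organisational — you work over finite subextensions via the primitive element theorem and glue, rather than adjoining one element at a time — and the paper also notes a shorter alternative route, namely applying Proposition~\ref{usederivations2} twice directly to $(k,k^{\sep},k^{\sep})$ using the unique extension of derivations $k\to k^{\sep}$ to $k^{\sep}$, which avoids the gluing step entirely.
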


\begin{proof}
This can be deduced from recursive applications of Theorem~\ref{separable}, starting with $A=R=:k$ a field, and $B=k^{\sep}$, and extending the biderivation one element at a time.
Or, and this may be easier, we can directly apply Proposition~\ref{usederivations2} to first extend $\bider$ uniquely to $(k,k^{\sep},k^{\sep})$, and then apply Proposition~\ref{usederivations2} again to extending the first argument to $k^{\sep}$.
Both stages use the fact that derivations $d:k\to k^{sep}$ extend uniquely to a derivation on $k^{\sep}$.
\end{proof}

\begin{theorem}[Transcendental base extensions]
\label{transcendental}
Suppose $A\subseteq B$ is an extension of rings and $b\in B$ is transcendental over $A$.
Suppose $R$ is an $A$-algebra, $D:R\to R\otimes_AB$ is a derivation, and $E:R\otimes_AA[b]\to R\otimes_AB$ is a derivation.
Every $(R\otimes_AB)$-valued biderivation on $R$ extends uniquely to a $(R\otimes_AB)$-valued biderivation on $R\otimes_AA[b]$ satisfying
\begin{itemize}
\item
$\{r\otimes 1,1\otimes b\}=D(r)$ for all $r\in R$, and
\item
$\{1\otimes b,x\}=E(x)$ for all $x\in R\otimes_AA[b]$.
\end{itemize}
\end{theorem}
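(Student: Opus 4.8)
The plan is to build the extension by applying Proposition~\ref{usederivations2} twice, once in each argument, exploiting the fact that $R\otimes_A A[b]$ is, since $b$ is transcendental over $A$, simply the polynomial ring over $R$ in one variable: $A[b]$ is free over $A$ on the powers of $b$, so $R\otimes_A A[b]\cong R[t]$ as an $R$-algebra via $t\mapsto 1\otimes b$. The relevant derivation-extension input is then the elementary fact that, for any $R[t]$-module $N$ and any derivation $d\colon R\to N$ (for the induced $R$-module structure), there is for each $n\in N$ a unique derivation $R[t]\to N$ extending $d$ and sending $t$ to $n$; applied with $N=R\otimes_A B$, this says each hamiltonian $\{r,\cdot\}\colon R\to R\otimes_A B$ has a unique $(R\otimes_A B)$-valued extension to $R\otimes_A A[b]$ sending $1\otimes b$ to $D(r)$.

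First I would extend the second argument. Since $D$ is a derivation $R\to R\otimes_A B$ and, by the previous paragraph, every hamiltonian $\{r,\cdot\}$ extends uniquely to $R\otimes_A A[b]$ with value $D(r)$ at $1\otimes b$, Proposition~\ref{usederivations2} (with $a=1\otimes b$ and $\delta=D$) yields a unique extension of $\bider$ to a biderivation on $(R,R\otimes_A A[b],R\otimes_A B)$ with $\{\cdot,1\otimes b\}=D$; in particular $\{r\otimes 1,1\otimes b\}=D(r)$. Next I would extend the first argument, applying Proposition~\ref{usederivations2} with the roles of the two ring-arguments interchanged (as noted in the remark following it): for each $s\in R\otimes_A A[b]$ the hamiltonian $\{\cdot,s\}\colon R\to R\otimes_A B$ extends uniquely to $R\otimes_A A[b]$ with value $E(s)$ at $1\otimes b$ (the same elementary fact), and $E$ itself is a derivation, so we obtain a unique extension to a biderivation on $(R\otimes_A A[b],R\otimes_A A[b],R\otimes_A B)$ with $\{1\otimes b,\cdot\}=E$. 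This last biderivation restricts on the first argument to the one built in the previous step, so it still satisfies $\{r\otimes 1,1\otimes b\}=D(r)$, and hence has both required properties.

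For uniqueness, each of the two applications of Proposition~\ref{usederivations2} delivers uniqueness and not merely existence: any $(R\otimes_A B)$-valued biderivation on $R\otimes_A A[b]$ extending $\bider$ and satisfying the two displayed conditions restricts, on the first argument, to a biderivation of the sort uniquely determined in the first step (its hamiltonians $\{r\otimes 1,\cdot\}$ are forced by the polynomial-ring fact), and is then itself forced by the second step. I do not anticipate a genuine obstacle; the only points needing care are the identification of $R\otimes_A A[b]$ as a polynomial $R$-algebra -- this is exactly where transcendence of $b$ is essential, in contrast to the algebraic case of Theorem~\ref{separable}, where one cannot freely prescribe the value at $b$ -- and keeping track, at each stage, of which of the two module structures on $R\otimes_A B$ is in play. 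One should not, incidentally, expect this extension to be lifting: Example~\ref{nonliftex} is precisely a transcendental extension where liftingness fails, which is why no such clause appears in the statement.
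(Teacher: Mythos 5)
Your proposal is correct and follows essentially the same route as the paper's proof: two applications of Proposition~\ref{usederivations2}, first in the second argument with $a=1\otimes b$ and $\delta=D$, then in the first argument with $\delta=E$, with existence and uniqueness of the extended hamiltonians supplied by the identification of $R\otimes_A A[b]$ with the polynomial ring $R[t]$ via transcendence of $b$. The additional remarks on where transcendence is used and on the absence of a lifting clause are accurate but not needed.
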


\begin{proof}
First apply Proposition~\ref{usederivations2} with $S=R$, $M=M'=R\otimes_AB$, $S'=R\otimes_AA[b]$, and with $a:=1\otimes b$ and $\delta:=D$.
Note that $R\otimes_AA[b]=R[a]$ is isomorphic to the polynomial ring in one variable over~$R$, and so for each $r\in R$ it is true that $\{r,\cdot\}:R\to R\otimes_AB$ does have a unique extension to a derivation on $R[a]$ taking $a$ to~$\delta(r)$.
Hence, we get a unique extension $(R,R\otimes_AA[b], R\otimes_AB,\bider)$.
Now we apply Proposition~\ref{usederivations2} again, this time to extend the first argument from $R$ to $R\otimes_AA[b]$.
For that, we use again $a:=1\otimes b$ and this time $\delta:=E$.
For each $x\in R\otimes_AA[b]$, the hamiltonian $\{\cdot,x\}:R\to R\otimes_AB$ has a unique extension to a derivation on $R[a]=R\otimes_AA[b]$ taking $a$ to~$\delta(x)$.
The extended $(R\otimes_AB)$-valued biderivation on $R\otimes_AA[b]$  that we obtain has by construction the two desired properties, and is by construction unique such.
\end{proof}

\begin{corollary}
\label{polyringext}
Every biderivation on $R$ extends to the polynomial ring~$R[t]$.
Moreover, given a pair of derivation $D:R\to R[t]$ and $E:R[t]\to R[t]$, there is a unique extension of $\bider$ to $R[t]$ satisfying $\{\cdot,t\}\upharpoonright_R=D$ and $\{t,\cdot\}=E$.
\end{corollary}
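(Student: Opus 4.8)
The plan is to obtain this as an immediate application of Theorem~\ref{transcendental} with $A=R$ and $B=R[t]$, taking $b:=t$. Since $t$ is transcendental over $R$ in $R[t]$, the only thing needed before invoking that theorem is to match up the various objects under the canonical identifications $R\otimes_RR=R$ and $R\otimes_RR[t]=R[t]$. Under these, $R\otimes_AA[b]=R[t]$ and $R\otimes_AB=R[t]$, so an ``$(R\otimes_AB)$-valued biderivation on $R\otimes_AA[b]$'' is simply an $R[t]$-valued biderivation on $R[t]$; as $R[t]$ carries the natural module structure over itself, this is exactly a biderivation on the ring $R[t]$ in the one-sorted sense. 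Likewise $\bider$ on $R$, composed with the inclusion $R\hookrightarrow R[t]$, is an $(R\otimes_AB)$-valued biderivation on $R$, so it is legitimate as the base biderivation.

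For the existence assertion, I would apply Theorem~\ref{transcendental} with $D=0$ and $E=0$, which yields an extension of $\bider$ to $R[t]$ (the one with $\{r,t\}=0$ for $r\in R$ and $\{t,\cdot\}=0$). For the ``moreover'' part, the prescribed derivations $D:R\to R[t]$ and $E:R[t]\to R[t]$ are, under the identifications above, precisely a derivation $R\to R\otimes_AB$ and a derivation $R\otimes_AA[b]\to R\otimes_AB$, so Theorem~\ref{transcendental} produces a unique $R[t]$-valued biderivation on $R[t]$ extending $\bider$ with $\{r\otimes1,1\otimes t\}=D(r)$ for all $r\in R$ and $\{1\otimes t,x\}=E(x)$ for all $x\in R[t]$. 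Unwinding, the first condition reads $\{\cdot,t\}\upharpoonright_R=D$ and the second reads $\{t,\cdot\}=E$, as required.

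There is no real obstacle: the content is entirely the translation through the tensor-product identifications, the substance having already been carried out in Theorem~\ref{transcendental}. The one point deserving a line of care is verifying that the $(R\otimes_AA[b])$-module structure on $R\otimes_AB$ used there becomes, after identification, the regular $R[t]$-module structure on $R[t]$; but this is immediate from the definition of the tensor product, and in particular the uniqueness clause of Theorem~\ref{transcendental} transfers verbatim.
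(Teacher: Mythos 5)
Your proof is correct and is exactly the paper's argument: the paper's entire proof is ``Apply Theorem~\ref{transcendental} with $A=R$, $B=R[t]$, $b=t$, and $D,E$ as given.'' Your additional care in unwinding the identifications $R\otimes_RR[t]\cong R[t]$ and checking the module structures is sound but routine.
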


\begin{proof}
Apply Theorem~\ref{transcendental} with $A=R$, $B=R[t]$, $b=t$, and $D, E$ as given.
\end{proof}

\bigskip
\subsection{A model-theoretic aside}
\label{subsect-nomc}
We can view bidifferential rings as first-order structures in the language $L$ of rings augmented by a binary function symbol for the biderivation.
But there is no good model theory of biderivations in the sense that the class of existentially closed bidifferential fields is not elementary.
Let us show how this follows from some of the basic extension results proved above.

\begin{proposition}
The $L$-theory of bidifferential fields of characteristic zero does not admit a model companion.
\end{proposition}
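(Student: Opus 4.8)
The plan is to show that existentially closed bidifferential fields of characteristic zero do not form an elementary class by exhibiting a first-order property that holds in some but not all members of an elementary class, or more directly by producing an ascending chain of bidifferential fields whose union fails to be existentially closed while each term embeds into one. Concretely, I would use the standard model-theoretic criterion: a model companion exists if and only if the class of existentially closed models is elementary, so it suffices to find an $L$-sentence (or type) witnessing non-elementarity. The natural obstruction comes from the phenomenon illustrated in Example~\ref{nonliftex}: the failure of bidifferential ideals to lift. I would exploit this to build a configuration that an existentially closed field must ``solve'' in infinitely many stages but cannot solve in one.

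First I would set up the relevant consistency facts. Using Corollary~\ref{polyringext}, over any bidifferential field $k$ of characteristic zero one can adjoin an element $t$ with prescribed hamiltonians; in particular one can arrange, as in Example~\ref{nonliftex}, a two-variable extension in which $\bider$ is trivial on the subfield generated by one coordinate but $\{x,y\}=x$. Then I would consider, for each $n$, the quantifier-free $L$-condition expressing that there exist elements $x, y_1, \dots, y_n$ with $\{x, y_i\} = y_{i-1}$ (say $y_0 = x$) and various genericity/independence constraints, together with the assertion that $x$ is a ``new'' element not algebraic over a fixed parameter. The point is to produce a chain $k_0 \subseteq k_1 \subseteq \cdots$ of bidifferential fields, each extendable to an existentially closed one, such that a certain existential formula becomes satisfiable only after infinitely many steps — so the union, which must itself be existentially closed if the class were elementary (elementary classes are closed under unions of chains), in fact is not.

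The cleanest route, and the one I would pursue, is to isolate a single $L$-sentence $\sigma$ and two bidifferential fields $K_1 \equiv K_2$ as $L$-structures (or better, an elementary extension argument) where being existentially closed is equivalent to $\sigma$ in one but its negation is consistent with existential closedness in the other; failing that, the chain argument above suffices. The key algebraic input throughout is the extension machinery of this section — Theorem~\ref{localisation}, Theorem~\ref{separable}, and especially Corollary~\ref{polyringext} — which guarantees that the partial configurations we write down are realised in \emph{some} bidifferential field extension, hence (by a routine Zorn's lemma argument, extending to an existentially closed model) are consistent with the universal theory of bidifferential fields.

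The main obstacle I anticipate is pinning down precisely which existential formula exhibits the non-elementarity: one must choose the configuration so that (i) it is finitely consistent, so each finite piece embeds into an existentially closed field, yet (ii) its ``limit'' demands something — e.g. an element whose orbit under a hamiltonian is infinite-dimensional, or an infinite descending chain of bidifferential ideals — that cannot be captured by any single existential formula and indeed contradicts existential closedness of the union. Making (ii) rigorous is the delicate step: it amounts to showing that no existentially closed bidifferential field of characteristic zero can simultaneously realise the whole configuration, which is where one genuinely uses that bidifferential ideals need not lift (Example~\ref{nonliftex}) to block the naive one-step solution. Once the right formula is identified, the rest is the standard Robinson-style argument that a model companion would make the existentially closed models axiomatisable, contradicting the chain.
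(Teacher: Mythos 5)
Your write-up is a strategy outline rather than a proof: you correctly recall the criterion (a model companion exists iff the class of existentially closed models is elementary) and you correctly identify that the extension machinery (Corollary~\ref{fractionext}, Corollary~\ref{polyringext}) supplies the consistency statements one needs, but the decisive step --- exhibiting a concrete first-order-detectable feature that is constant across all existentially closed models yet would have to vary in an elementary class --- is exactly the step you leave open. The configurations you gesture at (chains $\{x,y_i\}=y_{i-1}$, an ``infinite-dimensional orbit'', the non-lifting of ideals from Example~\ref{nonliftex}) are not shown to do the job, and you say yourself that making the obstruction rigorous is ``the delicate step''. As it stands there is a genuine gap: no specific sentence, type, or chain is produced, and the non-lifting phenomenon is in fact a red herring for this proposition.

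The missing idea is much simpler than what you are reaching for: pin down the \emph{constants}. The constant subring $\{r:\{r,\cdot\}\equiv 0 \text{ and } \{\cdot,r\}\equiv 0\}$ is a $0$-definable set (one definable condition for each of the two hamiltonians, quantifying over the other argument). Given an existentially closed bidifferential field $(F,\bider)$ of characteristic zero and any $r\in F\setminus\mathbb Q^{\alg}$, choose a derivation $D:F\to F$ with $D(r)\neq 0$, and use Corollaries~\ref{fractionext} and~\ref{polyringext} to extend $\bider$ to $F(t)$ with $\{\cdot,t\}\upharpoonright_F=D$, so that $\{r,t\}=D(r)\neq 0$. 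Existential closedness then yields $s\in F$ with $\{r,s\}\neq 0$, so $r$ is not a constant. Combined with the fact that existentially closed models are algebraically closed (Corollary~\ref{sepcl}), this shows the constant field of \emph{every} existentially closed bidifferential field of characteristic zero is exactly $\mathbb Q^{\alg}$, a fixed countable set. But an elementary class in which an infinite set is $0$-definable must, by compactness, contain members in which that set has arbitrarily large cardinality. Hence the class of existentially closed models is not elementary and no model companion exists. If you rewrite your argument around this invariant, the ``delicate step'' disappears entirely.
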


\begin{proof}
First of all, any biderivation extends (uniquely) to the separable closure by Corollary~\ref{sepcl}.
It follows that an existentially closed bidifferential field of characteristic zero is algebraically closed.

By the {\em constants} of a bidifferential ring $(R,\bider)$ we mean the subring of elements $r\in R$ such that the associated hamiltonians $\{r,\cdot\}$ and $\{\cdot,r\}$ are identically zero.
We observe that the constant subfield of an existentially closed bidifferential field $(F,\bider)$ is always $\mathbb Q^{\alg}$.
Indeed, consider the function field $F(t)$ in one variable.
Given any derivation $D:F\to F$, we can, using Corollaries~\ref{fractionext} and~\ref{polyringext}, build a biderivation on $F(t)$ extending $\bider$ on $F$ and such that $\{\cdot,t\}\upharpoonright_F=D$.
Suppose, now, that we are given $r\in F\setminus\mathbb Q^{\alg}$.
Then we can find $D$ such that $D(r)\neq 0$.
Hence $\{r,t\}=D(r)\neq 0$.
It follows by existential closedness, that there is some $s\in F$ such that $\{r,s\}\neq 0$.
So $r$ is not a constant in $(F,\bider)$.

Since the constants, which form a $0$-definable infinite set, does not grow within the class of existentially closed bidifferential fields, that class cannot be elementary.
\end{proof}

\bigskip
\subsection{Tensor products}
Unlike in the differential case (see Fact~\ref{tensor-derivation}), biderivations are not always compatible with tensor products.
The following example shows that we cannot always equip the tensor product of bidifferential rings with a biderivation:

\begin{example}
Let $k$ be a field and consider $R:=k[x,y]\otimes_{k[x]} k$, where the polynomial ring $k[x,y]$ is viewed as a $k[x]$-algebra in the natural way, and $k$ is viewed as a $k[x]$-algebra via the evaluation at zero homomorphism.
Now equip $k$ and $k[x]$ with trivial biderivations, while equipping $k[x,y]$ with the biderivation extending the trivial one on $k[x]$ and satisfying $\{\cdot,y\}\upharpoonright_{k[x]}=\frac{d}{dx}$.
One can use Corollary~\ref{polyringext} to see this exists.
We {\bf cannot} equip $R$ with a bidifferential structure extending $(k[x,y],\bider)$.
Indeed, if we could, then using that $1\otimes x=0$ in $R$, we would have
$0=\{y\otimes 1,1 \otimes x\}=\{y\otimes 1,x\otimes 1\}=\{x ,y\}\otimes 1=1\otimes 1$,
which is a contradiction as $R$ is not the trivial ring.
\end{example}

Even if we are interested primarily in tensor products over fields, as we are, problems persist.
Let us fix a bidifferential field $(k,\bider)$ and consider the category of {\em bidifferential $k$-algebras}, namely $k$-algebra equipped with a biderivation that extends $\bider$ on $k$.
We would like to to equip the tensor product $R\otimes_k S$ of bidifferential $k$-algebras with a suitable biderivation.
But, again unlike for differential rings, there is no truly canonical choice.
At first glance, at least when the biderivation is trivial on~$k$, it does seem that a canonical choice would be to define
\begin{equation}
\label{canonical}
\{r_1\otimes s_1,r_2\otimes s_2\}=\{r_1,r_2\}\otimes s_1s_2+r_1r_2\otimes\{s_1,s_2\},
\end{equation}
as is done in Poisson algebra.
Indeed, (\ref{canonical}) induces the unique biderivation on $R\otimes_k S$ that extends the biderivations on $R$ and $S$ and that sets $\{r\otimes 1,1\otimes s\}=0$.
But, as the following example shows, that choice has some pathological implications.

\begin{example}
\label{diagonalex}
Suppose $\bider$ is trivial on $k$ and make $R=k[x,y]$ the bidifferential $k$-algebra where $\{x,y\}=x$.
(So this is the same as Example~\ref{nonliftex}.)
If we equip $R\otimes_kR=k[x\otimes 1,y\otimes 1,1\otimes x,1\otimes y]$, with the canonical biderivation given by~(\ref{canonical}) then the diagonal ideal
$I:=\big(x\otimes 1-1\otimes x,\ \  y\otimes 1-1\otimes y\big)$
is not bidifferential.
Indeed,
$\{x\otimes 1-1\otimes x,y\otimes 1\}=\{x,y\}\otimes 1=x\otimes 1$,
which is not an element of $I$.
\end{example}

In any case, when $\bider$ is not trivial on~$k$, (\ref{canonical}) is simply not available to us: if $r,s\in k$ then $\{r\otimes 1,1\otimes s\}=\{r\otimes 1,s\otimes 1\}$ which should agree with $\{r,s\}\otimes 1$  and hence $\{r\otimes 1,1\otimes s\}$ should {\em not} be zero if $\{r,s\}\neq 0$ in $k$.

We propose an alternative bidifferential structure on certain tensor products of bidifferential $k$-algebras.
First, we will mostly be interested in the affine case:

\begin{definition}
An {\em affine bidifferential $k$-algebra} is a finitely generated $k$-algebra that is an integral domain and is equipped with a biderivation extending~$(k,\bider)$.
\end{definition}

The following statement may seem technical, but it has a clear geometric formulation and motivation that will be expressed later by Theorem~\ref{tensor-geometric}.
For now, let us only point out that $\iota$ corresponds to a dominant morphism of affine algebraic varieties, and the defining ideal of $\iota$ in the tensor product corresponds to the graph of that morphism.

\begin{theorem}
\label{tensor}
Suppose $(k,\bider)$ is a bidifferential field, $(R,\bider)$ and $(S,\bider)$ are affine bidifferential $k$-algebras with $\Frac(S)$ separable over $k$.
Suppose $\iota:S\to R$ is a $k$-linear bidifferential embedding.
Then there exists a nonzero $f\in S$ and a biderivation on $R\otimes_k S_f$ satisfying the following properties:
\begin{itemize}
\item[(a)]
$(R\otimes_kS_f,\bider)$ is a lifting extension of $(R,\bider)$.
\item[(b)]
$(R\otimes_kS_f,\bider)$ is an extension of $(S,\bider)$.
\item[(c)]
the subring $\iota(S)\otimes_kS_f\subseteq R\otimes_kS_f$ is bidifferential, and it is a lifting extension of $(\iota(S),\bider)$.
\item[(d)]
The defining ideal of $\iota$, namely the ideal generated by $\iota s\otimes 1-1\otimes s$ for all $s\in S$,
is a bidifferential ideal of $(R\otimes_kS_f,\bider)$.
\end{itemize}
\end{theorem}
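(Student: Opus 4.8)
The plan is to manufacture the biderivation on $R\otimes_k S_f$ by iterating the extension theorems of this section along a convenient presentation of $S_f$ over $k$, and then to read off (a)--(d) from the shape of the construction. For the presentation: since $\Frac(S)$ is separable over $k$, there is a nonzero $f_0\in S$ for which $S_{f_0}$ sits in a finite chain $k=C_0\subseteq C_1\subseteq\cdots\subseteq C_N=S_{f_0}$ in which each $C_i$ is obtained from $C_{i-1}$ by one of: adjoining an element $t_i\in S$ transcendental over $C_{i-1}$ (so $C_i=C_{i-1}[t_i]$ is a polynomial ring); localising at one element; or adjoining one element separably algebraic over $\Frac(C_{i-1})$. (Take a separating transcendence basis $t_1,\dots,t_d\in S$, write $\Frac(S)=k(t_1,\dots,t_d)[c]$ with $c$ a separable primitive element, and invert a single element of $S$ so that $S_{f_0}$ becomes a localisation of $k[t_1,\dots,t_d]_g[c]$.) During the construction below the separably algebraic steps will, via Theorem~\ref{separable}, occasionally demand a further localisation of the base; I let $f$ grow to absorb these, so at the end $f$ is a multiple of $f_0$ and the chain still has this form with $S_f=C_N$.

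I then extend $\{\cdot,\cdot\}$ on $R$ up the chain, building at each stage a biderivation on $R\otimes_k C_i$ valued in $R\otimes_k S_f$ which extends $\{\cdot,\cdot\}$ on $R$ (along $r\mapsto r\otimes 1$) and whose restriction to the subring $1\otimes C_i$ is the biderivation that Theorem~\ref{localisation} puts on $C_i\subseteq S_f$. At a localisation step this is automatic, unique and lifting by Theorem~\ref{localisation}; at a separably algebraic step it is automatic, unique and lifting by Theorem~\ref{separable} (after enlarging $f$). The substance is the transcendental step $C_i=C_{i-1}[t]$ with $t\in S$: by Theorem~\ref{transcendental} (with $A=C_{i-1}$, $B=S_f$, $b=t$, and the fixed ring $R\otimes_k C_{i-1}$) the biderivation extends to $R\otimes_k C_i=(R\otimes_k C_{i-1})[1\otimes t]$ once I name the two new hamiltonians $\{\cdot,1\otimes t\}$ and $\{1\otimes t,\cdot\}$. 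On $1\otimes C_{i-1}$ and $1\otimes C_i$ I prescribe these to be $1\otimes\{\cdot,t\}_{S_f}$ and $1\otimes\{t,\cdot\}_{S_f}$; on $R\otimes 1$ I take $\{\cdot,\iota t\}_R\otimes 1$ and $\{\iota t,\cdot\}_R\otimes 1$, corrected by a derivation with values in the defining ideal $\mathcal I_\iota$ of $\iota$. The correction is forced by compatibility on $k$: there the first recipe gives $\lambda\mapsto 1\otimes\{t,\lambda\}_S$ while $\{\iota t,\cdot\}_R\otimes 1$ gives $\lambda\mapsto\iota\{t,\lambda\}_S\otimes 1$, and the difference $\lambda\mapsto-(\iota\{t,\lambda\}_S\otimes 1-1\otimes\{t,\lambda\}_S)$ lies in $\mathcal I_\iota$ because $\{t,\lambda\}_S\in S$. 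What must be checked is that this $\mathcal I_\iota$-valued assignment on $k$ prolongs to a derivation on all of $R$, equivalently that $\{\iota t,\cdot\}_R\otimes 1$ plus the correction is an honest derivation $R\to R\otimes_k S_f$. \emph{This is the main obstacle}: when $R$ is not smooth over $k$ there is an a priori obstruction to such a prolongation, and it is precisely the hypothesis that $\iota$ is bidifferential that kills it (as one sees already in examples like $R=k[u,x]/(x^p-a)$, where bidifferentiality of $\iota$ forces the relevant brackets on $S$ to vanish).

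Finally I read off the conclusions. Because $R\otimes_k S_f/\mathcal I_\iota$ is the localisation $R_{\iota f}$ with the biderivation of Theorem~\ref{localisation}, property (d) comes down to checking that the biderivation just built descends modulo $\mathcal I_\iota$; it suffices to check this on the generators $r\otimes 1$ and $1\otimes s$, and it holds because the correction terms are $\mathcal I_\iota$-valued, so $\{r\otimes 1,1\otimes s\}\equiv\{r,\iota s\}_R$ and $\{1\otimes s,r\otimes 1\}\equiv\{\iota s,r\}_R$ modulo $\mathcal I_\iota$. Property (b)---that $1\otimes\cdot\colon S\to R\otimes_k S_f$ is bidifferential---is exactly the $1\otimes C_i$ prescription, propagated automatically through the localisation and separably algebraic steps. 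For (a): the localisation and separably algebraic steps are already lifting and a composition of lifting extensions is lifting, so only the transcendental steps need attention (they are genuinely not lifting in general, cf. Example~\ref{nonliftex}); given a bidifferential ideal $I$ of $R$, expanding $\{a\otimes s,\,r\otimes s'\}$ and $\{r\otimes s,\,a\otimes s'\}$ for $a\in I$ by the Leibniz rule places every summand in $I(R\otimes_k S_f)$ except those featuring $\{a\otimes 1,1\otimes t\}$, and that element lies in $I(R\otimes_k S_f)$ because its $R$-part $\{a,\iota t\}_R\otimes 1$ is in $I\otimes 1$ and the correction is arranged to preserve $I(R\otimes_k S_f)$. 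Property (c) is the same bookkeeping one level down: $\iota(S)\otimes_k S_f$ is closed under $\{\cdot,\cdot\}$ since on $\iota(S)\otimes 1$ the bracket stays in $\iota(S)\otimes 1$, on $1\otimes S_f$ it stays in $1\otimes S_f$, and the mixed hamiltonians were built from $\{\cdot,\cdot\}_R|_{\iota(S)}$, $\{\cdot,\cdot\}_{S_f}$ and an $\mathcal I_\iota$-valued correction; that it is a lifting extension of $(\iota(S),\{\cdot,\cdot\})$ is the (a)-computation run inside $\iota(S)\otimes_k S_f$.
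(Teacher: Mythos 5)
Your overall architecture --- normalise $S$ over $k$ and climb through transcendental, separably algebraic and localisation steps via Theorems~\ref{transcendental}, \ref{separable} and \ref{localisation} --- matches the paper's, which uses separable Noether normalisation and prescribes the mixed hamiltonians by $\{r\otimes 1,1\otimes y_i\}=\{r,\iota y_i\}\otimes 1$ and $\{1\otimes y_i,1\otimes y_j\}=1\otimes\{y_i,y_j\}$. The genuine gap is your ``correction term.'' You rightly notice that prescribing $\{1\otimes t,\cdot\}$ as $\{\iota t,\cdot\}_R\otimes 1$ on $R\otimes 1$ and as $1\otimes\{t,\cdot\}_S$ on $1\otimes C_{i-1}$ over-determines it on $k$, and you propose to repair this with a derivation on $R$, valued in the defining ideal $\mathcal I_\iota$, extending $\lambda\mapsto -(\iota\{t,\lambda\}\otimes 1-1\otimes\{t,\lambda\})$ from $k$. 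But you never construct that extension: you name it ``the main obstacle'' and then assert, with no argument, that bidifferentiality of $\iota$ removes it. Extending a derivation from $k$ to a (possibly non-smooth) finitely generated domain $R$ without further localising is not automatic, and this unconstructed map is the very biderivation the theorem asserts to exist. Worse, even granting its existence, your proof of (a) needs the correction to carry every bidifferential ideal $I$ of $R$ into $I(R\otimes_kS_f)$ --- again only asserted (``the correction is arranged to preserve\dots'') --- so the correction threatens exactly the lifting property that the uncorrected prescription $\{a\otimes 1,1\otimes t\}=\{a,\iota t\}\otimes 1\in I\otimes 1$ gives for free.

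Two further points of comparison. First, the paper avoids the correction entirely: in the transcendental step it prescribes $\{\cdot,1\otimes y_i\}$ only by its restriction to $R\otimes 1$ and its values on the transcendental generators $1\otimes y_j$ (always consistent, since these generate a polynomial ring over $R$), and recovers property (b) \emph{a posteriori} from Lemma~\ref{mdt}, using that $S$ admits no nontrivial $k[y]$-linear derivations into torsion-free modules; it does not impose the restriction to $1\otimes k[y]$ as part of the construction. Second, your treatment of (d) is too quick for the separably algebraic part of $S$: for such $s$ the value $\{x,1\otimes s\}$ is not prescribed but forced by the unique algebraic extension, and the congruence $\{x,1\otimes s\}\equiv\{x,\iota s\otimes 1\}$ modulo $\mathcal I_\iota$ has to be proved. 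That is where the paper does real work (Claims~\ref{i0} and~\ref{derivative-like}: a twisted Leibniz identity for $s\mapsto \iota s\otimes 1-1\otimes s$, differentiation of the minimal polynomial, and the facts that $\mathcal I_\iota$ is prime and does not contain $1\otimes\frac{dP}{dt}(s)$); ``it holds because the correction terms are $\mathcal I_\iota$-valued'' only covers the generators handled in transcendental and localisation steps.
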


\begin{remark}
In particular, if we take $S=R$ and $\iota=\id$ then this gives us a biderivation on (a localisation) of $R\otimes_kR$ avoiding the pathologies exhibited by Example~\ref{diagonalex}.
\end{remark}

\begin{proof}[Proof of Theorem~\ref{tensor}]
By the separable form of Noether's normalisation lemma (see~\cite[16.18]{eisenbudbook}) there is $y:=(y_1,\dots,y_n)$ in $S$ algebraically independent over $k$ such that $\Frac(S)$ is separably algebraic over $k(y)$.

{\em Step~1 of the construction.}
We extend $\bider$ from $R$ to an $R\otimes_kS$-valued biderivation on $R\otimes_kk[y]$ using Theorem~\ref{transcendental} recursively.
For this, we view the biderivation on $R$ as being $R\otimes_kS$-valued, and the biderivation on $S$ restricting to an $R\otimes_kS$-valued biderivation on $k[y]$.
That is, we have
\begin{eqnarray*}
\bider&:&R\times R\to R\otimes_kS\\
\bider&:&k[y]\times k[y]\to R\otimes_kS
\end{eqnarray*}
Suppose we have extended these to $R\otimes_kk[y_1,\dots,y_{i-1}]$.
Define the derivation $D:R\otimes_kk[y_1,\dots,y_{i-1}]\to R\otimes_kS$ to be the derivation determined by
\begin{eqnarray*}
D(r\otimes 1)&=&\{r,\iota y_i\}\otimes 1\ \ \text{for all $r\in R$, and}\\
D(1\otimes y_j)&=&1\otimes\{y_j,y_i\}\ \ \text{for all $j< i$,}
\end{eqnarray*}
and define 
$E:R\otimes_kk[y_1,\dots,y_i]\to R\otimes_kS$ to be the derivation determined by:
\begin{eqnarray*}
E(r\otimes 1)&=&\{\iota y_i,r\}\otimes 1\ \ \text{for all $r\in R$, and}\\
E(1\otimes y_j)&=&1\otimes\{y_i,y_j\}\ \ \text{for all $j\leq i$.}
\end{eqnarray*}
Now apply Theorem~\ref{transcendental} with this $D$ and $E$ to obtain $\bider$ on $R\otimes_kk[y_1,\dots,y_i]$.
Recursively, we obtain a unique $R\otimes_kS$-valued biderivation on $R\otimes_kk[y]$ that  extends the given biderivations on $R$ and $k[y]$ and satisfies:
\begin{eqnarray}
\label{choicer}\{r\otimes 1, 1\otimes y_i\}&=&\{r,\iota y_i\}\otimes 1\\
\label{choicel}\{1\otimes y_i,r\otimes 1\}&=&\{\iota y_i,r\}\otimes 1
\end{eqnarray}
for all $r\in R$ and $i=1,\dots,n$.

{\em Step 2 of the construction.}
We now extend $\bider$ from $R\otimes_kk[y]$ to $R\otimes_kS$, at the expense of localising~$S$.
As $S$ is a finitely generated and $\Frac(S)$ is separably algebraic over $k(y)$,  we can write $S=k[y,b]$ where $b=(b_1,\dots,b_m)$ are separably algebraic over $k(y)$.
For each $i$ let $P_i\in k[y,b_1,\dots,b_{i-1}][t]$ be of minimal degree such that $P_i(b_i)=0$.
Let $\displaystyle f:=\prod_{i=1}^m\frac{dP_i}{dt}(b_i)$.
Applying Theorem~\ref{separable} repeatedly we obtain an extension of $\big(R\otimes_kk[y],\bider\big)$ to $\big(R\otimes_kS_f,\bider\big)$.
Actually, we are using here that $R\otimes_kS$ has no $S$-torsion to extract from Theorem~\ref{separable} that this particular $f$ works -- otherwise we only know there is some $f$ which does. See Remark~\ref{whatf}.

We now verify~(a) through~(d).

$(a).$
By construction, $(R,\bider)\to \big(R\otimes_kk[y],\bider\big)\to \big(R\otimes_kS_f,\bider\big)$ are extensions.
Here we view all the above biderivations as $R\otimes_kS_f$-valued under the natural identifications.
We verify that both steps are lifting extensions.
The second is lifting because it is obtained by repeated applications of Theorem~\ref{separable} and then an application of Theorem~\ref{localisation}, and both of these theorems tell us that the extensions they produce are lifting.
To verify that the first extension is lifting, fix a bidifferential ideal pair $(I,J)$ in $(R,\bider)$.
We first show that
$$\{I(R\otimes_kk[y]),R\otimes_kk[y]\}\subseteq I(R\otimes_kS).$$
Let $x\in I$.
It suffices to show that $\{x\otimes 1,r\otimes1\}$ and $\{x\otimes 1,1\otimes y_i\}$ are in $I(R\otimes_kS)$ for all $r\in R$ and $i=1,\dots,n$.
The former is because 
$\{x\otimes 1,r\otimes1\}=\{x,r\}\otimes 1$ and  $\{I,R\}\subseteq I$.
The second is because
$\{x\otimes 1,1\otimes y_i\}=\{x,\iota y_i\}\otimes 1$ by~(\ref{choicer}) and again using that $\{I,R\}\subseteq I$.
A similar argument, using~(\ref{choicel}) and the fact that $\{R,J\}\subseteq J$, shows that $\{R\otimes_kk[y],J(R\otimes_kk[y])\}\subseteq J(R\otimes_kS)$.
So $(I(R\otimes_kk[y]),J(R\otimes_kk[y]))$ is a bidifferential ideal pair, as desired.

$(b).$
Let $\bider'$ be the $(R\otimes_kS_f)$-valued biderivation on $S$ obtained by restricting $\big(R\otimes_kS_f,\bider\big)$ to $S=1\otimes S$.
We want to show that $\bider'=\bider$.
Note that $\bider'$ and $\bider$ agree on $k[y]$ because 
by construction,
$$(k[y],\bider)\to \big(R\otimes_kk[y],\bider\big)\to \big(R\otimes_kS_f,\bider\big)$$
are extensions.
Now, even though $\Frac(S)$ is separably algebraic over $k(y)$, it is not exactly the case that $S$ is differentially trivial over $k[y]$.
The problem arises when one consider derivations on $S$ with values in an $S$-module that has torsion.
However, it is true that if $M$ is a torsion-free $S$-module then $S$ has no nontrivial $k[y]$-linear $M$-valued derivations.
It follows by Lemma~\ref{mdt} that there is at most one  $M$-valued biderivation on $S$ extending any given $M$-valued biderivation on $k[y]$.
Applying this to $M:=R\otimes_kS_f$, which has no $S$-torsion as it is free over $S_f$, 
we get that $\bider'=\bider$ on $S$, as desired.

$(c).$
Since $\iota:S\to R$ is bidifferential and $k$-linear, $\iota(S)$ is a bidifferential $k$-subalgebra of $R$.
Inspecting the construction of $\bider$ on $R\otimes_kS_f$ readily reveals that $\iota(S)\otimes_kS_f$ is a bidifferential subring of $R\otimes_kS_f$.
In fact, the restriction of $\bider$ to $\iota(S)\otimes_k S_f$ is just the biderivation our construction yields if we had viewed $\iota$ as $\iota(S)$-valued rather than as $R$-valued.
So part~(a) of this theorem applied to the case when $R=\iota(S)$, gives us that $\iota(S)\otimes_kS_f$ is a lifting extension of $\iota(S)$.

$(d).$
Let $d:S\to R\otimes_kS_f$ be the function $d(s):=\iota s\otimes1-1\otimes s$, and let $I$ be the ideal of $R\otimes_kS_f$ generated by $\{d(s):s\in S\}$.
We wish to show that $I$ is a bidifferential ideal.
We begin with the following approximation:
\begin{claim}
\label{i0}
Let $I_0$ be the ideal in $R\otimes_kk[y]$ generated by $\{d(s):s\in k[y]\}$.
Then $I_0(R\otimes_kS_f)$ is a bidifferential ideal.
\end{claim}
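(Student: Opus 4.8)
The plan is to reduce the claim to a computation on algebra generators and then to push that computation through the lifting machinery already in place.

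First I would record the elementary algebraic properties of $d$: it is additive, it annihilates $k$, and it obeys $d(st)=(\iota s\otimes 1)\,d(t)+(1\otimes t)\,d(s)$. Consequently $\{\,s\in k[y]:d(s)\in(d(y_1),\dots,d(y_n))\,\}$ is a $k$-subalgebra of $k[y]$ that contains each $y_i$, and is therefore all of $k[y]$; so $I_0=(d(y_1),\dots,d(y_n))$ as an ideal of $R\otimes_k k[y]$, and $I_0(R\otimes_k S_f)=(d(y_1),\dots,d(y_n))$ as an ideal of $R\otimes_k S_f$. Write $J$ for the latter. Since the hamiltonians are derivations and $J$ is generated by the $d(y_i)$, for $a=\sum_l c_l\,d(y_l)\in J$ and any $x$ one has $\{x,a\}\equiv\sum_l c_l\{x,d(y_l)\}$ and $\{a,x\}\equiv\sum_l c_l\{d(y_l),x\}$ modulo $J$; so $J$ is bidifferential as soon as each of the derivations $\{d(y_i),\cdot\}$ and $\{\cdot,d(y_i)\}$ carries $R\otimes_k S_f$ into $J$. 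Composing with the quotient $\pi\colon R\otimes_k S_f\to (R\otimes_k S_f)/J$, what must be shown is $\pi\circ\{d(y_i),\cdot\}=0$ and $\pi\circ\{\cdot,d(y_i)\}=0$; as these are again derivations it is enough to verify vanishing on a $k$-algebra generating set, and I would use $\{r\otimes 1:r\in R\}$, $\{1\otimes s:s\in S\}$, and $\tfrac1f$ (the value on $\tfrac1f$ being forced by that on $1\otimes f$).

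On a generator $r\otimes 1$ the value is literally $0$: by~(\ref{choicer}) and~(\ref{choicel}), together with the fact that the biderivation restricts on $R\otimes 1$ to $\bider$ (the analogue of part~(b) for the first factor, coming from part~(a)), one gets $\{d(y_i),r\otimes 1\}=\{\iota y_i,r\}\otimes 1-\{\iota y_i,r\}\otimes 1=0$, and symmetrically $\{r\otimes 1,d(y_i)\}=0$. On a generator $1\otimes s$, part~(b) lets one rewrite $\{1\otimes y_i,1\otimes s\}=1\otimes\{y_i,s\}$ and $\{1\otimes s,1\otimes y_i\}=1\otimes\{s,y_i\}$, and since $\iota$ is bidifferential one has $\{\iota y_i,\iota s\}=\iota\{y_i,s\}$; so the residue is governed by $\{\iota y_i\otimes 1,1\otimes s\}$ viewed as a derivation in~$s$. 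Using~(\ref{choicer}) and the fact that $J$ identifies $1\otimes p$ with $\iota p\otimes 1$ for $p\in k[y]$, one checks that $\pi\{d(y_i),1\otimes s\}=\pi\,d(\{y_i,s\})$ and $\pi\{1\otimes s,d(y_i)\}=\pi\,d(\{s,y_i\})$, at least for $s\in k[y]$.

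The hard part will be to propagate these identities to all of $S$ and then to conclude that the resulting $\pi\,d(\cdot)$ vanishes. The tool I would use is differential triviality: one should check that, modulo $J$, the ring $R\otimes_k S_f$ is differentially trivial over $R\otimes_k k[y]$, exploiting the separable algebraicity of $\Frac(S)$ over $k(y)$ and the localisation by~$f$ exactly as in Theorems~\ref{separable} and~\ref{localisation}. Two derivations in~$s$ into $(R\otimes_k S_f)/J$ agreeing on $k[y]$ would then agree on all of $S$, and the same input applied to $d$ itself on the $b_j$'s should give $\pi\circ d=0$ on $S$. An alternative, and to my mind cleaner, organisation is to prove first that $(I_0,I_0)$ is a bidifferential ideal pair of the \emph{Step-1} biderivation $(R\otimes_k k[y],R\otimes_k k[y],R\otimes_k S,\bider)$ --- for which only the generators $r\otimes 1$ and $1\otimes y_j$ of $R\otimes_k k[y]$ need be tested --- and then to invoke the fact, established in the proof of~(a), that the Step-2 extension $(R\otimes_k k[y],\bider)\to(R\otimes_k S_f,\bider)$ is lifting. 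In either packaging the single delicate point is the membership $d(\{y_i,y_j\})\in J$: because $d(s)$ need not lie in $J$ for an arbitrary $s\in S$, this is exactly where the separable structure of $S$ over $k[y]$ and the choice of~$f$ must be brought to bear, and I expect it to be the crux of the argument.
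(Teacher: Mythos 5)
Your reduction of $I_0$ to the generators $d(y_1),\dots,d(y_n)$ and your computations on $r\otimes 1$ and $1\otimes y_j$ match the paper's. The problem is your primary organisation: testing the derivations $\pi\circ\{d(y_i),\cdot\}$ and $\pi\circ\{\cdot,d(y_i)\}$ on a generating set of $R\otimes_kS_f$ that includes $1\otimes s$ for \emph{all} $s\in S$ proves too much and leaves a gap you cannot close with the tools you name. Demanding $\pi\{d(y_i),1\otimes s\}=0$ for every $s\in S$ forces you, via the base case $\pi\{d(y_i),1\otimes y_j\}=\pi\,d(\{y_i,y_j\})$ and your propagation step, to show $d(S)\subseteq J$, i.e.\ that $I_0(R\otimes_kS_f)$ is the whole defining ideal $I$ of $\iota$. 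That is not what Claim~\ref{i0} asserts, is not needed for its later use, and is exactly what the subsequent argument (Claim~\ref{derivative-like} plus the primality of $I$) is designed to establish about $I$ rather than about $I_0$: there is no reason for $d(b_j)$ to lie in the ideal $(d(y_1),\dots,d(y_n))$. The tool you propose for the propagation also fails as stated: $d$ obeys only the twisted rule $d(st)=(\iota s\otimes1)d(t)+(1\otimes t)d(s)$, so $\pi\circ d$ is an honest derivation only once you already know $J=I$; and differential triviality of $(R\otimes_kS_f)/J$ over $R\otimes_kk[y]$ is not available, since you have no control over torsion in that quotient. Finally, your main route would require evaluating $\{d(y_i),1\otimes b_j\}$ and $\{1\otimes b_j,d(y_i)\}$, for which no formula analogous to~(\ref{choicer}) or~(\ref{choicel}) exists.

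The fix is your own ``alternative organisation,'' which is precisely the paper's proof: show that $(I_0,I_0)$ is a bidifferential ideal pair for the Step-1 biderivation $(R\otimes_kk[y],R\otimes_kk[y],R\otimes_kS)$ --- where the only hamiltonian values to examine are $\{r\otimes y_i,d(y_j)\}$ and its mirror --- and then invoke that Step~2, being a composite of the extensions produced by Theorems~\ref{separable} and~\ref{localisation}, is lifting. With that organisation the single computation is $\{r\otimes y_i,d(y_j)\}=(r\otimes1)\,d(\{y_i,y_j\})$, and no bracket at $1\otimes b_j$ ever has to be evaluated. One caveat you correctly sensed: $\{y_i,y_j\}$ lies in $S$, not in $k[y]$, so even here the membership $d(\{y_i,y_j\})\in I_0\cdot(R\otimes_kS)$ is not automatic (the paper asserts it without comment); what is automatic is $d(\{y_i,y_j\})\in I$, and that weaker conclusion is all that the application of Claim~\ref{i0} in the proof of part~(d) actually requires.
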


\begin{claimproof}
By construction, and Theorems~\ref{localisation} and~\ref{separable}, $(R\otimes_KS_f,\bider)$ is a lifting extension of $(R\otimes_kk[y],\bider)$.
Hence it suffices to show that $I_0$ is a bidifferential ideal.
We have to show that $\{x,d(s)\}$ and $\{d(s),x\}$ are in $I_0$ for all $x\in R\otimes_kk[y]$ and $s\in k[y]$.
By symmetry, it will suffice to show the former.
It is not hard to see that it suffices to prove this when $x=r\otimes y_i$ and $s=y_j$, for some $r\in R$ and $i,j\leq m$.
This is what we do.

We compute
\begin{align*}
\{r\otimes y_i,1\otimes y_j\} & = \{r\otimes 1,1\otimes y_j\} 1\otimes y_i+r\otimes 1\{1\otimes y_i,1\otimes y_j\} \\
& = \{r,\iota y_j\}\otimes y_i + r\otimes \{y_i,y_j\}\ \ \ \ \text{ by~(\ref{choicer})}
\end{align*}
and 
\begin{align*}
\{r\otimes y_i,\iota y_j\otimes 1\} & = \{r\otimes 1,\iota y_j\otimes 1\} 1\otimes y_i+r\otimes 1\{1\otimes y_i,\iota y_j\otimes 1\} \\
& = \{r,\iota y_j\}\otimes y_i + r\{\iota y_i,\iota y_j\}\otimes 1 \ \ \ \ \text{ by~(\ref{choicel})}\\
& = \{r,\iota y_j\}\otimes y_i + r\iota(y_i,y_j\})\otimes 1 \ \ \ \ \text{ as $\iota$ is bidifferential.}\\
\end{align*}
Hence, we get
\begin{eqnarray*}
\{r\otimes y_i,d(y_j)\}
&=&
\{r\otimes y_i,\iota y_j\otimes 1\}-\{r\otimes y_i,1\otimes y_j\}\\
&=&
r\otimes 1\left(\iota(\{y_i,y_j\})\otimes 1- 1\otimes  \{y_i,y_j\})\right)\\
&=&
(r\otimes1)d(\{y_i,y_j\})\\
&\in& I_0
\end{eqnarray*}
as desired.
\end{claimproof}

To pass from $I_0$ to $I$ we will use the following derivative-like property of $d$.

\begin{claim}
\label{derivative-like}
Suppose $P\in S[t]$ and $s\in S$.
Then $d(P(s))=ad(s)+P^d(\iota s\otimes1)$ where $a\equiv 1\otimes\frac{dP}{dt}(s)\mod I$.
Here $P^d$ is the polynomial over $R\otimes_kS_f$ obtained from $P$ by applying $d$ to the coefficients.
\end{claim}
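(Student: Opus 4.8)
The plan is to exploit the fact that $d$ is the difference of two ring homomorphisms. Write $\alpha,\beta\colon S\to R\otimes_kS_f$ for the ring homomorphisms $\alpha(s)=\iota s\otimes1$ and $\beta(s)=1\otimes s$, so that $d=\alpha-\beta$ and, by the very definition of $I$, we have $\alpha(s)\equiv\beta(s)\mod I$ for all $s\in S$. Since $\alpha$ and $\beta$ are additive so is $d$, and since $R\otimes_kS_f$ is commutative, expanding $\alpha(uv)-\beta(uv)$ yields the twisted Leibniz rule
\[
d(uv)=\alpha(v)\,d(u)+\beta(u)\,d(v)\qquad\text{for all }u,v\in S.
\]

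First I would treat powers. Applying the factorisation $x^j-y^j=(x-y)\sum_{i=0}^{j-1}x^iy^{j-1-i}$ to $x=\alpha(s)$ and $y=\beta(s)$ gives the \emph{exact} identity $d(s^j)=d(s)\,g_j(s)$, where $g_j(s):=\sum_{i=0}^{j-1}\alpha(s)^i\beta(s)^{j-1-i}$ (so $g_0(s)=0$). Reducing modulo $I$ and using $\alpha(s)\equiv\beta(s)=1\otimes s$, each of the $j$ summands of $g_j(s)$ becomes $(1\otimes s)^{j-1}=1\otimes s^{j-1}$, so $g_j(s)\equiv 1\otimes(j\,s^{j-1})\mod I$.

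Now write $P=\sum_j c_jt^j$ with $c_j\in S$, so $P(s)=\sum_j c_js^j$. Using additivity of $d$, then the twisted Leibniz rule with $u=c_j$ and $v=s^j$, then the identity for $d(s^j)$, together with $\alpha(s^j)=(\iota s\otimes1)^j$ and commutativity:
\begin{align*}
d(P(s))&=\sum_j d(c_js^j)=\sum_j\bigl(\alpha(s^j)\,d(c_j)+\beta(c_j)\,d(s)\,g_j(s)\bigr)\\
&=\sum_j d(c_j)(\iota s\otimes1)^j+d(s)\Bigl(\sum_j\beta(c_j)\,g_j(s)\Bigr)=P^d(\iota s\otimes1)+d(s)\,a,
\end{align*}
where $a:=\sum_j(1\otimes c_j)\,g_j(s)$. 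Reducing $a$ modulo $I$ with the congruence for $g_j(s)$ gives $a\equiv\sum_j 1\otimes(j\,c_js^{j-1})=1\otimes\tfrac{dP}{dt}(s)\mod I$, which is exactly the assertion.

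The only real point of care — and the closest thing to an obstacle — is bookkeeping: $d$ is \emph{not} a derivation, so one cannot naively "differentiate" $P(s)$; the twisted Leibniz rule genuinely mixes the homomorphisms $\alpha$ and $\beta$, and the identity $d(s^j)=d(s)g_j(s)$ must be kept exact, with $g_j(s)$ recorded as a specific element of $R\otimes_kS_f$. Only at the very end does one invoke $\alpha(s)\equiv\beta(s)\mod I$ to collapse $g_j(s)$ and pin down $a$. The degenerate term $j=0$ causes no trouble, since $d(1)=0$ and $g_0(s)=0$ match the disappearance of the constant term of $P$ under $\tfrac{d}{dt}$.
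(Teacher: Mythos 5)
Your proof is correct and follows essentially the same route as the paper: the same twisted Leibniz rule $d(uv)=(\iota v\otimes 1)d(u)+(1\otimes u)d(v)$, the same exact formula for $d(s^j)$ as the ``divided difference'' $\sum_{i}(\iota s\otimes 1)^i(1\otimes s)^{j-1-i}$ times $d(s)$, and the same final reduction modulo $I$ to collapse that sum to $1\otimes js^{j-1}$. Packaging $d$ as the difference of the two ring homomorphisms $\alpha,\beta$ and invoking the factorisation of $x^j-y^j$ is a tidier way to obtain what the paper gets by direct verification and induction on $n$, but it is the same argument.
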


\begin{claimproof}
By additivity it suffices to prove this in the case that $P(t)=bt^n$ is a monomial.
First, one verifies readily that, for all $s_1,s_2\in S$,
$$d(s_1s_2)=d(s_1)(\iota s_2\otimes1)+(1\otimes s_1)d(s_2).$$
From this, by induction on $n>0$, it follows that
$$d(s^n)=(\iota s^{n-1}\otimes 1+\iota s^{n-2}\otimes s+\cdots+\iota s\otimes s^{n-2}+1\otimes s^{n-1})d(s)$$
and hence
\begin{eqnarray*}
d(P(s))&=&
d(bs^n)\\
&=&
d(b)(\iota s^n\otimes1)+(1\otimes b)(\iota s^{n-1}\otimes 1+\iota s^{n-2}\otimes s+\cdots+1\otimes s^{n-1})d(s)\\
&=&
P^d(\iota s\otimes1)+ad(s)
\end{eqnarray*}
where $a:=(1\otimes b)(\iota s^{n-1}\otimes 1+\iota s^{n-2}\otimes s+\cdots+1\otimes s^{n-1})$.
Now, note that
$$\iota s^{i-1}\otimes s^{i-1}\equiv 1\otimes s^{n-1}\mod I$$
for each $i>0$.
And so,
\begin{eqnarray*}
a&=&
(1\otimes b)(\iota s^{n-1}\otimes 1+\iota s^{n-2}\otimes s+\cdots+1\otimes s^{n-1})\\
&\equiv&
(1\otimes b)(1\otimes ns^{n-1})\mod I\\
&=&
1\otimes\frac{dP}{dt}(s)\mod I
\end{eqnarray*}
as desired.
\end{claimproof}

Now, to prove that $I$ is bidifferential it suffices to check that $\{x,d(s)\}$ and $\{d(s),x\}$ are in $I$ for all $s\in S$ and $x\in R\otimes_kS_f$.
We will prove the former, the latter follows by symmetry.

Recall that $S$ is separably algebraic over $k(y)$.
Let $P\in k[y][t]$ be of minimal degree such that $P(s)=0$.
Then
\begin{eqnarray*}
0&=&
\{x,d(P(s))\}\\
&=&
\{x, ad(s)+P^d(\iota s\otimes1)\}\ \ \ \text{ by Claim~\ref{derivative-like}}\\
&=&
a\{x, d(s)\}+d(s)\{x, a\}+\{x,P^d(\iota s\otimes1)\}\\
&\equiv&
(1\otimes\frac{dP}{dt}(s))\{x, d(s)\}+\{x,P^d(\iota s\otimes1)\}\mod I.
\end{eqnarray*}
Now, as the coefficients of $P$ are in $k[y]$, we have that $P^d(\iota s\otimes1)\in I_0$.
Hence, by Claim~\ref{i0}, $\{x,P^d(\iota s\otimes1)\}\in I_0(R\otimes_kS_f)\subseteq I$.
It follows that
$$(1\otimes\frac{dP}{dt}(s))\{x, d(s)\}\in I.$$
This is a good time to observe that $I$ is (the localisation of) the kernel of the $k$-linear homomorphism $R\otimes_kS\to R$ given by $r\otimes s\mapsto r\iota(s)$.
In particular, as $\frac{dP}{dt}(s)\neq0$ by minimal choice of degree, and $\iota$ is injective, we have that $(1\otimes\frac{dP}{dt}(s)))\notin I$.
But, as $R$ is an integral domain, $I$ is prime.
Hence $\{x,d(s)\}\in I$, as desired.
\end{proof}

\bigskip
\section{Bidifferential-algebraic geometry}
\label{sect-balggeom}

\noindent
In this section, we fix a bidifferential field $(k,\bider)$ of characteristic zero.
The following is the bidifferential analogue of the $D$-varieties studied in differential-algebraic geometry.

\begin{definition}
A {\em $B$-variety} over $k$ is an affine algebraic variety $X$ over $k$ together with a biderivation on $k[X]$ extending $(k,\bider)$.

A {\em $B$-subvariety} of $X$ is a subvariety whose vanishing ideal is a bidifferential ideal.

A {\em $B$-point} of $X$ over $k$ is a $k$-point which as a singleton is a $B$-subvariety.
The set of all $B$-points is denoted by $X^\sharp(k)$.

A {\em morphism} of $B$-varieties (or a {\em $B$-morphism}) is a regular map $\phi:X\to Y$ such that the induced $k$-algebra homomorphism $\phi^*:k[Y]\to k[X]$ is bidifferential.
We say $\phi$ is a {\em lifting} morphism if $\phi^*$ is a lifting extension.
\end{definition}

Note that if $\phi:X\to Y$ is a lifting morphism then $\phi^{-1}(Z)$ is a $B$-subvariety of~$X$, for every $B$-subvariety $Z\subseteq Y$.
This is not necessarily true if $\phi$ is not lifting.

The irreducible components of a $B$-subvariety are themselves $B$-subvarieties.
Indeed, one applies to each of the hamiltonians of $(k[X],\bider)$ the fact that the minimal prime ideals containing a radical differential ideal in a noetherian differential ring are all differential.
See~\cite[\S1.2]{kolchin73}, for instance.

\begin{remark}
It is tempting to associate to a $B$-variety $X$ the various $D$-variety structures arising from the hamiltonians.
The problem is that there is no reason why, for a given $a\in k[X]$, the derivation $\{a,\cdot\}:k[X]\to k[X]$ should extend a ($k$-valued) derivation on the field $k$, in general it is only $k[X]$-valued.
However, if it {\em happens} that $\{a,\cdot\}$ restricts to a derivation $\delta_a:k\to k$ then one can consider the $D$-variety $(X,s_a)$ over $(k,\delta_a)$ given by the section $s_a:X\to \tau X$ induced by $\{a,\cdot\}$.
(Here $\tau X$ denotes the prolongation of $X$ with respect to $\delta_a$.)
In particular, if $\{a,\cdot\}$ is $k$-linear then $s_a:X\to TX$ puts on $X$ the structure of a $D$-variety over the constants.
For example, if $X$ is a Poisson variety over $k$, and so all the hamiltonians are $k$-linear, we can associate to $X$ the collection $\{s_a:X\to TX:a\in k[X]\}$ of $D$-variety structures on $X$ induced by the hamiltonians, and these $D$-variety structures do to some extent capture the Poisson structure. This connection between Poisson and D-variety structures has been exploited in \cite{BLLSM, BLSM, LLS} to answer questions around the Poisson Dixmier-Moeglin equivalence.
\end{remark}

Given a $B$-variety $X$ over $k$ and a field extension $k\subseteq L$, it will of course be necessary to sometimes view $X$ as ``being over $L$", namely to take the base extension $X_L:=X\times_kL$.
This suggests questions about the existence and uniqueness of $B$-variety structures on $X_L$ extending the given one on $X$.
Actually we will only be interested in ``compatible" $B$-variety structures:

\begin{definition}[Compatible base extension]
\label{compatible}
Suppose $k\subseteq L$ is an extension of bidifferential fields and $X$ is a $B$-variety over $k$.
By a {\em compatible} $B$-variety structure on $X_L$ we mean a biderivation on $L[X_L]=k[X]\otimes_kL$ that extends the given biderivations on $k[X]$ and $L$, and such that $k[X]\subseteq L[X_L]$ is a lifting extension.
\end{definition}

The point is that if we equip $X_L$ with a compatible $B$-variety structure then whenever $Z$ is a $B$-subvariety of $X$,  $Z_L$ is a $B$-subvariety of $X_L$.
As one might by now expect, $X_L$ may not admit a compatible $B$-variety structure, and even if it did there may be no canonical choice.
The following is the geometric formulation of Example~\ref{nonliftex}:

\begin{example}
\label{noncompex}
Let $X$ be the affine line $\mathbb A^1$ over $k$, and let $L$ be the rational function field $k(y)$.
So $X_L$ is the ``generic fibre" of the co-ordinate projection $\mathbb A^2\to\mathbb A^1$.
We put on $k$ and $L$ the trivial biderivation, and on $X$ the trivial $B$-variety structure.
What about $L[X_L]=k(y)[x]$?
We can of course equip it too with the trivial biderivation, making $X_L$ a compatible base extension of $X$.
But we could also consider the biderivation on $L[X_L]$ that is trivial on $k(y)$ and $k[x]$ but where $\{x,y\}=x$.
Then the corresponding $B$-variety structure on $X_L$ is not a compatible base extension of $X$.
Indeed, $Z=\{1\}$ is a $B$-subvariety of $X$ but $Z_L$ is not a  $B$-subvariety of $X_L$, as $(x-1)k(y)[x]$ is a not a bidifferential ideal in $k(y)[x]$ since $\{x-1,y\}=x$.
\end{example}

Fortunately these issues do not arise when taking algebraic base extensions.

\begin{proposition}
\label{alg-bvar}
Suppose $X$ is a $B$-variety over $k$.
Then there is unique $B$-variety structure on $X_{k^{\alg}}$ extending that on $X$, and this is a compatible base extension.
\end{proposition}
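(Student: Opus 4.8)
The plan is to reduce everything to repeated application of Theorem~\ref{separable} over the finite subextensions of $k^{\alg}/k$, the essential new input being that $k[X]\otimes_k k^{\alg}$ is \emph{differentially trivial} over $k[X]$. To see the latter, let $d$ be a $k[X]$-linear derivation on $k[X]\otimes_k k^{\alg}$ with values in an arbitrary module. For $b\in k^{\alg}$ with minimal polynomial $P\in k[t]$ over $k$ we have $P(1\otimes b)=0$, and since the coefficients of $P$ lie in $k\subseteq k[X]$, applying $d$ gives $0=P'(1\otimes b)\,d(1\otimes b)$; as $P'(b)$ is a nonzero element of the field $k^{\alg}$, the element $P'(1\otimes b)=1\otimes P'(b)$ is a unit of $k[X]\otimes_k k^{\alg}$, so $d(1\otimes b)=0$. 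Since $k[X]\otimes_k k^{\alg}$ is generated over $k[X]$ by $1\otimes k^{\alg}$, it follows that $d=0$ (and the same computation shows $k^{\alg}$ is differentially trivial over $k$). Uniqueness in the proposition is now immediate from Lemma~\ref{mdt}: a $B$-variety structure on $X_{k^{\alg}}$ extending that on $X$ is in particular a $(k[X]\otimes_k k^{\alg})$-valued biderivation on $k[X]\otimes_k k^{\alg}$ extending $\bider$ on $k[X]$, and there can be at most one such.

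For existence, pick for each finite subextension $k\subseteq k'\subseteq k^{\alg}$ a primitive element, so that $k'=k[b]$, and apply Theorem~\ref{separable} with $A=k$, $B=k'$, $R=k[X]$. Since $A[b]=k[b]=k'$ is a field, the nonzero $f$ it produces is a unit, so $R\otimes_A A[b]_f=k[X]\otimes_k k'$; thus the theorem furnishes the \emph{unique} biderivation $\bider_{k'}$ on $k[X]\otimes_k k'$ (valued there) extending $\bider$ on $k[X]$, and asserts that it is a lifting extension of $(k[X],\bider)$. If $k_1\subseteq k_2$ are two such subextensions, apply Theorem~\ref{separable} once more, now with $A=k_1$, $B=k_2$ and $R=k[X]\otimes_k k_1$ (legitimate, since $k_1\subseteq k_2$ are fields and $\bider_{k_1}$ may be viewed as taking values in $k[X]\otimes_k k_2$), to extend $\bider_{k_1}$ to a biderivation on $k[X]\otimes_k k_2$; as this extends $\bider$ on $k[X]$, it must equal $\bider_{k_2}$ by uniqueness. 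Hence $\bider_{k_2}$ restricts to $\bider_{k_1}$ on $k[X]\otimes_k k_1$, and the $\bider_{k'}$ glue along the directed union $\bigcup_{k'}k[X]\otimes_k k'=k[X]\otimes_k k^{\alg}$ to a biderivation $\bider$ on $k[X]\otimes_k k^{\alg}$ extending $\bider$ on $k[X]$.

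It remains to check that $\bigl(k[X]\otimes_k k^{\alg},\bider\bigr)$ is a compatible base extension of $X$ (Definition~\ref{compatible}), i.e.\ that it also extends $\bider$ on $k^{\alg}=1\otimes k^{\alg}$ and that $k[X]\subseteq k[X]\otimes_k k^{\alg}$ is lifting. For the first, the restriction of $\bider$ to $1\otimes k^{\alg}$ is a $(k[X]\otimes_k k^{\alg})$-valued biderivation on $k^{\alg}$ extending $\bider$ on $k$; since $k^{\alg}$ is differentially trivial over $k$, Lemma~\ref{mdt} shows there is at most one such, so it coincides with the biderivation on $k^{\alg}$ supplied by Corollary~\ref{sepcl}, viewed as $(k[X]\otimes_k k^{\alg})$-valued. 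For the lifting claim, let $I$ be a bidifferential ideal of $(k[X],\bider)$; each $\bider_{k'}$ being lifting, $I\cdot(k[X]\otimes_k k')$ is a bidifferential ideal of $(k[X]\otimes_k k',\bider_{k'})$, and since every element of $k[X]\otimes_k k^{\alg}$ lies in some $k[X]\otimes_k k'$, it follows that $I\cdot(k[X]\otimes_k k^{\alg})=\bigcup_{k'}I\cdot(k[X]\otimes_k k')$ is a bidifferential ideal of $(k[X]\otimes_k k^{\alg},\bider)$. So the extension is lifting, completing the construction.

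The only genuinely delicate point --- and the reason one cannot simply imitate the field case --- is that $k[X]\otimes_k k^{\alg}$ need not be an integral domain when $X$ is not geometrically irreducible, so one cannot pass to a fraction field and lift a biderivation along a separable algebraic extension of fields. This is why the argument proceeds by gluing Theorem~\ref{separable} over finite subextensions and routes uniqueness through Lemma~\ref{mdt} and the differential triviality of $k[X]\otimes_k k^{\alg}$ over $k[X]$; everything past that observation is bookkeeping.
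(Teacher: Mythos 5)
Your proof is correct and follows essentially the same route as the paper: existence via iterated applications of Theorem~\ref{separable} along the algebraic extension (the paper extends one element at a time by transfinite recursion, you glue over finite subextensions via primitive elements, which amounts to the same thing), uniqueness and agreement with $\bider$ on $k^{\alg}$ via Lemma~\ref{mdt} together with differential triviality of $k[X]\otimes_k k^{\alg}$ over $k[X]$ and of $k^{\alg}$ over $k$. You supply some details the paper leaves implicit (the explicit verification of differential triviality, and that the lifting property survives the directed union), but there is no substantive difference in approach.
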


\begin{proof}
First, as we are in characteristic zero, there is, by Corollary~\ref{sepcl},  a unique extension of $\bider$ from $k$ to $k^{\alg}$, which we also denote by $\bider$.
It is with respect to $(k,\bider)\subseteq(k^{\alg},\bider)$ that a compatible base extension of $X$ is being claimed.

View the biderivation on $k[X]$ as having values in $k[X]\otimes_kk^{\alg}=k^{\alg}[X_{k^{\alg}}]$.
Fix $b\in k^{\alg}$.
By Theorem~\ref{separable}, applied to $A=k$, $B=k^{\alg}$ and $R=k[X]$, there is an extension of $\bider$ from $k[X]$ to $k[X]\otimes_kk(b)$, and moreover this is a lifting extension.
Now work over $k(b)$ and iterate.
Eventually, by transfinite recursion, we produce a lifting extension $(k[X],\bider)\subseteq (k[X]\otimes_kk^{\alg},\bider')$.
To see that it extends $(k^{\alg},\bider)$ as well, note that $\bider'$ restricts to a $(k[X]\otimes_kk^{\alg})$-valued biderivation on $k^{\alg}$ extending $(k,\bider)$.
The fact that $k^{\alg}$ is differentially trivial over $k$ then implies, by Lemma~\ref{mdt}, that $\bider'$ agrees with $\bider$ on $k^{\alg}$.

So $(k[X]\otimes_kk^{\alg},\bider')$ gives $X_{k^{\alg}}$ a $B$-variety structure that is a compatible base extension of $X$.
Uniqueness follows from Lemma~\ref{mdt} using the fact that $k[X]\otimes_kk^{\alg}$ is differentially trivial over $k[X]$.
\end{proof}

Here is the geometric interpretation of Theorem~\ref{tensor}.

\begin{theorem}
\label{tensor-geometric}
Suppose $\phi:X\to Y$ is a dominant $B$-morphism of irreducible $B$-varieties over $k$.
Then there exists a nonempty Zariski open subset $Y_0\subseteq Y$ and a $B$-variety structure on $X\times Y_0$ such that
\begin{itemize}
\item[(a)]
the co-ordinate projection $X\times Y_0\to X$ is a lifting $B$-morphism,
\item[(b)]
the co-ordinate projection $X\times Y_0\to Y_0$ is a $B$-morphism,
\item[(c)]
$\phi\times\id:X\times Y_0\to Y\times Y_0$ induces a $B$-variety structure on $Y\times Y_0$ such that the co-ordinate projection $Y\times Y_0\to Y$ is a lifting $B$-morphism, and
\item[(d)]
the graph of $\phi\upharpoonright_{Y_0}$ is a $B$-subvariety of $X\times Y_0$.
\end{itemize}
\end{theorem}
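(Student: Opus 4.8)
The plan is to translate the statement directly into the algebraic language of Theorem~\ref{tensor} by passing to coordinate rings. Write $R:=k[X]$ and $S:=k[Y]$. Since $\phi:X\to Y$ is a dominant $B$-morphism of irreducible $B$-varieties over $k$, the comorphism $\phi^*:k[Y]\to k[X]$ is an injective $k$-algebra homomorphism that is bidifferential; that is, we may regard $S\subseteq R$ as a $k$-linear bidifferential embedding $\iota:=\phi^*$. Both $R$ and $S$ are finitely generated integral domains over $k$ equipped with biderivations extending $(k,\bider)$, hence are affine bidifferential $k$-algebras. Since we are in characteristic zero, $\Frac(S)$ is automatically separable over $k$, so all hypotheses of Theorem~\ref{tensor} are met.

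First I would invoke Theorem~\ref{tensor} to obtain a nonzero $f\in S$ and a biderivation on $R\otimes_k S_f$ satisfying conditions~(a)--(d) of that theorem. Set $Y_0:=\Spec S_f$, a nonempty Zariski open subset of $Y$ (nonempty because $f\neq0$ and $S$ is a domain). Then $k[Y_0]=S_f$ and $k[X\times Y_0]=R\otimes_k S_f$, so the biderivation from Theorem~\ref{tensor} is exactly a $B$-variety structure on $X\times Y_0$. Now it is a matter of reading off the four geometric conclusions from the four algebraic ones. The projection $X\times Y_0\to X$ corresponds at the level of coordinate rings to the inclusion $R\hookrightarrow R\otimes_k S_f$, which by Theorem~\ref{tensor}(a) is a lifting extension; hence that projection is a lifting $B$-morphism, giving~(a). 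The projection $X\times Y_0\to Y_0$ corresponds to $S_f\hookrightarrow R\otimes_k S_f$; by Theorem~\ref{tensor}(b) this is an extension of $(S,\bider)$, and since it is also, by construction, an extension of $(S_f,\bider)$ (localisation being unique by Theorem~\ref{localisation}), it is a $B$-morphism, giving~(b).

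For~(c), the morphism $\phi\times\id:X\times Y_0\to Y\times Y_0$ has comorphism $S\otimes_k S_f\to R\otimes_k S_f$ given by $\iota\otimes\id$, whose image is the subring $\iota(S)\otimes_k S_f$; by Theorem~\ref{tensor}(c) this subring is bidifferential, so it carries the quotient biderivation and this gives a $B$-variety structure on $\overline{(\phi\times\id)(X\times Y_0)}$, which one checks is $Y\times Y_0$ since $\phi$ is dominant — then the claim that $Y\times Y_0\to Y$ is a lifting $B$-morphism is precisely the assertion in Theorem~\ref{tensor}(c) that $\iota(S)\otimes_k S_f$ is a lifting extension of $(\iota(S),\bider)\cong(S,\bider)=(k[Y],\bider)$. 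Finally, the graph of $\phi\upharpoonright_{Y_0}:Y_0\to X$ — more precisely, the graph of the induced map $X\times Y_0\supseteq \Gamma \to Y_0$, or equivalently the closed subscheme of $X\times Y_0$ cut out by the ideal generated by $\iota s\otimes1-1\otimes s$ for $s\in S$ — is by Theorem~\ref{tensor}(d) defined by a bidifferential ideal, hence is a $B$-subvariety, giving~(d). The only genuine subtlety, and the step I would be most careful about, is the bookkeeping in~(c): one must be sure that the geometric object on which $\iota(S)\otimes_k S_f$ is the coordinate ring really is $Y\times Y_0$ (using dominance of $\phi$ to identify $\Spec(\iota(S)\otimes_k S_f)$ with $Y\times Y_0$, since $\iota$ is injective), and that the lifting statement transports correctly under the isomorphism $\iota(S)\cong S$. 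Everything else is a direct dictionary translation, so I would keep the proof short and point the reader to Theorem~\ref{tensor} for the substance.
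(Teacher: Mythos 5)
Your proposal is correct and follows essentially the same route as the paper: set $R=k[X]$, $S=k[Y]$, $\iota=\phi^*$, invoke Theorem~\ref{tensor} to get $f$ and the biderivation on $k[X]\otimes_k k[Y]_f=k[X\times Y_0]$ with $Y_0=Y\setminus V(f)$, and read off (a)--(d) as the geometric translations of the corresponding parts of Theorem~\ref{tensor}. The paper leaves the dictionary implicit where you spell it out (in particular the identification in (c)), but there is no substantive difference.
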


\begin{proof}
Let $R:=k[X]$, $S:=k[Y]$, and $\iota:=\phi^*:k[Y]\to k[X]$ which is an embedding as $\phi$ is dominant and is bidifferential as $\phi$ is a $B$-morphism.
Applying Theorem~\ref{tensor} yields a nonzero $f\in k[Y]$ and a biderivation on $k[X]\otimes_kk[Y]_f$, satisfying certain properties.
Now, $k[X]\otimes_kk[Y]_f=k[X\times Y_0]$
where $Y_0:=Y\setminus V(f)$.
This makes $X\times Y_0$ a $B$-variety.
Now the statements~$(a)$ through~$(d)$ of the current theorem are just the geometric meanings of the corresponding statements~$(a)$ through~$(d)$ of Theorem~\ref{tensor}.
\end{proof}

\bigskip
\subsection{Generic fibres and a universal base extension}
With a bit more work we can use Theorem~\ref{tensor-geometric} to find generic fibres of dominant $B$-morphisms.

\begin{definition}
Suppose $\phi:X\to Y$ is a dominant $B$-morphism of irreducible $B$-varieties over $k$.
By a {\em generic $B$-fibre of $\phi$} we mean
\begin{itemize}
\item
a bidifferential field extension $L\supseteq k$,
\item
compatible base extensions $X_L$ and $Y_L$ such that $\phi_L:X_L\to Y_L$ is a $B$-morphism, and
\item a $B$-point $\alpha\in Y_L^\sharp(L)$ that is (Zariski) generic in $Y_L$ over $k$ such that $\phi_L^{-1}(\alpha)$ is a $B$-subvariety of $X_L$
\end{itemize}
We sometimes suppress the data and simply call $X_\alpha:=\phi_L^{-1}(\alpha)$ a generic $B$-fibre over $L$.
\end{definition}

Note, first of all, that fibres over $B$-points need not be $B$-subvarieties in general:

\begin{example}
\label{noPfibrex}
In Example~\ref{nonliftex} (again!) we considered the trivial biderivation on $k[x]$ extended to the biderivation on $k[x,y]$ where $\{x,y\}=x$.
So we have the trivial $B$-variety structure on $\mathbb A^1$ and a $B$-variety structure on $\mathbb A^2$ such that the first co-ordinate projection $\pi:\mathbb A^2\to\mathbb A^1$ is a $B$-morphism.
Now $1\in \mathbb A^1(k)$ is a $B$-point as every $k$-point is.
But $\pi^{-1}(1)$ has vanishing ideal $(x-1)k[x,y]$, which we have seen is not a bidifferential ideal.
\end{example}

\begin{theorem}
\label{genPfibre}
Suppose $\phi:X\to Y$ is a dominant $B$-morphism of irreducible $B$-varieties over $k$.
Then a generic $B$-fibre of $\phi$ exists over $L=k(Y)$.
\end{theorem}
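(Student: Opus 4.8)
The idea is to assemble the generic $B$-fibre directly from Theorem~\ref{tensor-geometric} by specialising the ``$Y_0$" copy to a generic point. Set $L:=k(Y)=\Frac\big(k[Y]\big)$. First I would put a bidifferential field structure on $L$: the biderivation on $S:=k[Y]$ extends uniquely to $\Frac(S)=L$ by Corollary~\ref{fractionext}, and $(k,\bider)\subseteq(L,\bider)$ is then an extension of bidifferential fields. Apply Theorem~\ref{tensor-geometric} to $\phi:X\to Y$ to obtain a nonempty Zariski open $Y_0\subseteq Y$ and a $B$-variety structure on $X\times Y_0$ with properties~(a)--(d). Since $Y_0$ is open dense, $k[Y_0]=k[Y]_f$ for the relevant $f$, and its fraction field is still $L$; by Theorem~\ref{localisation} the biderivation on $k[X\times Y_0]=k[X]\otimes_k k[Y]_f$ extends (uniquely, liftingly) to the localisation at the multiplicative set $k[Y]_f\setminus\{0\}$, which is exactly $k[X]\otimes_k L=L[X_L]$. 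This gives a $B$-variety structure on $X_L$.

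**Extracting the three pieces of data.** I would then check that this $X_L$, together with $L$, is a generic $B$-fibre. The projection $X\times Y_0\to X$ is lifting by~(a), and composing with the localisation (lifting by Theorem~\ref{localisation}) shows $k[X]\subseteq L[X_L]$ is a lifting extension; combined with the fact that the biderivation on $L[X_L]$ restricts to the given one on $L$ — which follows because the restriction to $k[Y_0]$ is the given biderivation by~(b), and $L$ is the fraction field of $k[Y_0]$, so uniqueness of the extension to $L$ (Corollary~\ref{fractionext}, via Lemma~\ref{mdt}) pins it down — this says $X_L$ is a \emph{compatible} base extension of $X$ in the sense of Definition~\ref{compatible}. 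Similarly, property~(c) gives a $B$-variety structure on $Y\times Y_0$ making $Y\times Y_0\to Y$ lifting, and localising gives a compatible base extension $Y_L$; property~(b) for the other projection, localised, makes $X\times Y_0\to Y_0$ and hence (after localisation) $\phi_L:X_L\to Y_L$ a $B$-morphism, since $\phi\times\id$ is compatible with the projections to the $Y_0$ factor by construction in Theorem~\ref{tensor-geometric}.

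**The generic point and its fibre.** The generic $\alpha$ should be the image of the generic point of $Y$ under the diagonal-type embedding: concretely, $\alpha\in Y_L(L)$ is the $L$-point of $Y_L=Y\times_k L$ corresponding to the canonical map $k[Y]\to L$, i.e.\ the generic point of $Y$ regarded as an $L$-rational point of $Y_L$. That $\alpha$ is Zariski generic in $Y_L$ over $k$ is immediate. I would check $\alpha$ is a $B$-point of $Y_L$, i.e.\ $\{\alpha\}$ is a $B$-subvariety: this is where property~(d) enters. Under the identification $L[Y_L]=k[Y]\otimes_k L$ localised, the vanishing ideal of $\alpha$ is the localisation of the defining ideal of $\id_{k[Y]}$ — the ideal generated by $s\otimes 1-1\otimes s$ for $s\in k[Y]$ — which by~(c)/(d) applied on the $Y$-side (the theorem's~(d) is stated for the graph of $\phi$ inside $X\times Y_0$, but the image $B$-structure on $Y\times Y_0$ has the diagonal as a $B$-subvariety, being the pushforward of the graph) is bidifferential in $k[Y\times Y_0]$, hence in $L[Y_L]$ after localising. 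Likewise, $\phi_L^{-1}(\alpha)$ corresponds to the localisation of the defining ideal of $\iota=\phi^*$ inside $k[X]\otimes_k k[Y]_f$, which is bidifferential by Theorem~\ref{tensor-geometric}(d); localisation preserves this.

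**Main obstacle.** The delicate point is bookkeeping the three tensor factors and making sure the biderivation on $L[X_L]$ obtained by localisation genuinely restricts, on the $1\otimes L$ copy, to the original biderivation on $L$ coming from Corollary~\ref{fractionext} — rather than to some other extension. This is exactly the kind of issue flagged in the proof of Theorem~\ref{tensor}(b): one must use that $L[X_L]$ is a torsion-free $L$-module (it is free over $L$) so that Lemma~\ref{mdt} applies and the two candidate biderivations on $L$ agree. A secondary nuisance is that property~(d) of Theorem~\ref{tensor-geometric} is phrased for the graph of $\phi$ in $X\times Y_0$, so identifying $\phi_L^{-1}(\alpha)$ with (the localisation of) that graph's defining ideal, and $\{\alpha\}$ with the diagonal in $Y\times Y_0$, requires carefully matching the scheme-theoretic fibre with the ideal-theoretic construction of Theorem~\ref{tensor}; but this is routine commutative algebra once the dictionary is set up. I would expect no conceptual difficulty beyond these identifications, since all the real work has been front-loaded into Theorems~\ref{tensor}, \ref{separable}, \ref{transcendental} and~\ref{localisation}.
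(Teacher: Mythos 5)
Your proposal is correct and follows essentially the same route as the paper: apply Theorem~\ref{tensor-geometric}, pass to $L=k(Y)$ by localisation (which is lifting by Theorem~\ref{localisation}), take $\alpha$ to be the tautological generic point of $Y$ viewed in $Y_L(L)$, and use property~(d) to see that the fibre is a $B$-subvariety. The only immaterial difference is how you certify that $\alpha$ is a $B$-point: you push the graph forward to the diagonal of $Y\times Y_0$ before localising, whereas the paper contracts the bidifferential ideal $I\cdot L[X_L]$ along $\phi_L^*$ after localising --- both are the same contraction-of-a-bidifferential-ideal-to-a-bidifferential-subring argument.
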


\begin{proof}
Apply Theorem~\ref{tensor-geometric} to get a nonempty Zariski open subset $Y_0\subseteq Y$ and a $B$-variety structure on $X\times Y_0$ such that the co-ordinate projection to $X$ is a lifting $B$-morphism.
At the same time, we get a $B$-variety structure on $Y\times Y_0$ such that $\phi\times\id:X\times Y_0\to Y\times Y_0$ is a $B$-morphism and $Y\times Y_0\to Y$ is a lifting $B$-morphism.

Let $L:=k(Y)=k(Y_0)$.
As localisation is lifting (Theorem~\ref{localisation}), we have that $k[X]\otimes_kk[Y_0]\subseteq k[X]\otimes_kL=L[X_L]$ and $k[Y]\otimes_kk[Y_0]\subseteq k[Y]\otimes_kL=L[Y_L]$ are lifting extension.
In diagrams, we now have:
$$
\xymatrix{
L[Y_L]\ar[rr]^{\phi_L^*}&&L[X_L]\\
k[Y]\otimes_kk[Y_0]\ar[u]\ar[rr]^{(\phi\times\id)^*}&&k[X]\otimes_kk[Y_0]\ar[u]\\
k[Y]\ar[u]\ar[rr]^{\phi^*}&&k[X]\ar[u]
}
$$
with all arrows being bidifferential homomorphisms and the vertical arrows being in addition lifting extensions.
This says precisely that we have compatible $B$-variety structures on the base extensions $X_L$ and $Y_L$ such that $\phi_L$ is a $B$-morphism.

Now, fix $\alpha=(y_1,\dots,y_m)\in Y_L(L)$ such that $y_1,\dots,y_m$ generate $k[Y]$.
So $\alpha$ is a generic point of $Y_L$ over $k$.
Theorem~\ref{tensor-geometric} also tells us that the graph of $\phi$ restricted to $X\times Y_0$ is a $B$-subvariety.
So the ideal $I\subseteq k[X]\otimes_kk[Y_0]$ generated by
$$\{\phi^* y_j\otimes 1-1\otimes y_j:j=1,\dots,m\}$$
is a bidifferential ideal.
Hence the extension  $I\cdot L[X_L]$ is a bidifferential ideal, and this is precisely the vanishing ideal of the fibre $\phi^{-1}(\alpha)$ in $X_L(L)$.
On the other hand, the contraction of $I\cdot L[X_L]$ to $L[Y_L]$, which is precisely the vanishing ideal of $\alpha\in Y_L(L)$, is also bidifferential (being the contraction of a bidifferential ideal under a bidifferential extension).
So $\alpha$ is a $B$-point and the fibre is a $B$-subvariety.
\end{proof}

\begin{remark}
Applying the theorem to the case when $X=Y$ and $\phi=\id$ we see that every $B$-variety has a {\em generic $B$-point} over $k$; namely a compatible base extension to the function field and a $B$-point of this extension that is (Zariski) generic over $k$.
\end{remark}

We can iterate this to get a ``universal" base extension for $\phi:X\to Y$.

\begin{corollary}
\label{universal}
Suppose $\phi:X\to Y$ is a dominant $B$-morphism of absolutely irreducible $B$-varieties over $k$.
There is a bidifferential field extension $k\subseteq L$ and compatible base extensions $X_L$ and $Y_L$ so that $\phi_L:X_L\to Y_L$ is a $B$-morphism, and for every finitely generated subextension field, say  $k\subseteq F\subseteq L$, there is a $B$-point $\alpha\in Y^\sharp_L(L)$ that is generic over $F$ and such that the fibre $\phi_L^{-1}(\alpha)$ is a $B$-subvariety.
\end{corollary}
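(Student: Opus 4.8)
The plan is to iterate Theorem~\ref{genPfibre} transfinitely, building an increasing chain of bidifferential fields together with compatible base extensions of $X$ and $Y$, and to arrange the construction so that every finitely generated subextension appearing along the way is ``captured'' by a generic $B$-point at some later stage. The key observation is that a finitely generated subextension $F$ of the eventual union $L$ is already contained in one of the fields $L_\alpha$ appearing in the chain, so it suffices to ensure that, cofinally often in the construction, we adjoin a generic $B$-fibre of $\phi$ base-changed over whatever field we have reached so far.

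Concretely, I would proceed as follows. Start with $L_0:=k$, $X^{(0)}:=X$, $Y^{(0)}:=Y$, and $\phi^{(0)}:=\phi$, with the given $B$-variety structures. At a successor stage, given a bidifferential field $L_i$ and a dominant $B$-morphism $\phi^{(i)}:X^{(i)}\to Y^{(i)}$ of $B$-varieties over $L_i$ (where $X^{(i)}$, $Y^{(i)}$ are compatible base extensions of $X$, $Y$), apply Theorem~\ref{genPfibre} to $\phi^{(i)}$: this produces $L_{i+1}:=L_i(Y^{(i)})$, compatible base extensions $X^{(i+1)}:=(X^{(i)})_{L_{i+1}}$ and $Y^{(i+1)}:=(Y^{(i)})_{L_{i+1}}$, a $B$-morphism $\phi^{(i+1)}=\phi^{(i)}_{L_{i+1}}$, and a generic $B$-point $\alpha_i\in Y^{(i+1),\sharp}(L_{i+1})$ generic over $L_i$ whose fibre under $\phi^{(i+1)}$ is a $B$-subvariety. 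Because each base extension in Theorem~\ref{genPfibre} is compatible, the composite $X=X^{(0)}\subseteq X^{(i+1)}$ is again a compatible base extension (compatibility is transitive: composing lifting extensions gives a lifting extension, and extending biderivations is transitive), and similarly for $Y$. At limit stages take unions: $L_\lambda:=\bigcup_{i<\lambda}L_i$, and $k[X]\otimes_kL_\lambda=\bigcup_{i<\lambda}k[X]\otimes_kL_i$ carries the union of the biderivations, which is a biderivation extending $(L_\lambda,\bider)$; that $k[X]\subseteq k[X]\otimes_kL_\lambda$ is lifting follows since a bidifferential ideal pair generating a bidifferential pair after each $\otimes_kL_i$ does so after the union. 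Stop once the construction has length sufficient to absorb all finitely generated subextensions; since $X$, $Y$ are absolutely irreducible the function fields appearing are finitely generated over the previous field, so a chain of length $\omega$ already has the property that every finitely generated subextension $F$ of $L:=L_\omega$ lies in some $L_i$.

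Finally, given finitely generated $k\subseteq F\subseteq L$, pick $i$ with $F\subseteq L_i$. Then $\alpha_i\in Y_L^\sharp(L)$ (its $B$-point status and its fibre's $B$-subvariety status are preserved under the further compatible base extensions $L_{i+1}\subseteq L$, since compatible base extension sends $B$-subvarieties to $B$-subvarieties and $B$-points to $B$-points), and $\alpha_i$ is generic over $L_i\supseteq F$, hence generic over $F$; and $\phi_L^{-1}(\alpha_i)$ is a $B$-subvariety of $X_L$. So $\alpha:=\alpha_i$ works.

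I expect the main obstacle to be bookkeeping at the limit and successor stages to confirm that compatibility is genuinely transitive and preserved under directed unions --- in particular that the union of an increasing chain of biderivations on the directed union of coordinate rings is again a biderivation and that the lifting property passes to the union. This is routine but needs care, since a bidifferential ideal of the union is not a priori the extension of one from a finite stage; one checks instead directly that for an ideal pair coming from the base $k[X]$, its extension to $k[X]\otimes_kL$ is bidifferential by reducing any hamiltonian computation on a finite set of generators to a finite stage $k[X]\otimes_kL_i$, where the lifting property of that stage applies.
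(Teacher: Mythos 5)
Your proposal is correct and follows essentially the same route as the paper's own proof: an $\omega$-chain of applications of Theorem~\ref{genPfibre}, setting $L_{i+1}=L_i(Y_{L_i})$, followed by taking the union $L=\bigcup_{i<\omega}L_i$ and noting that any finitely generated $F\subseteq L$ lands in some $L_i$. The paper states this more tersely, while you additionally spell out the (routine but genuinely needed) checks that compatibility is transitive, that the $B$-point and $B$-fibre properties persist under the subsequent compatible base extensions, and that the union stage behaves well.
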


\begin{proof}
We construct a chain of  bidifferential field extensions
$$k=L_0\subseteq L_1\subseteq L_2\subseteq\cdots$$
with compatible $B$-variety structures on $X_{L_{i}}, Y_{L_{i}}$ such that $\phi_{L_{i}}:X_{L_{i}}\to Y_{L_{i}}$ are $B$-morphisms, and $Y_{L_{i}}(L_{i})$ has a $B$-point that is generic over $L_{i-1}$ and such that the fibre above that point is a $B$-subvariety of $X_{L_{i}}$.
Indeed, we go from $L_i$ to $L_{i+1}:=L_i(Y_{L_i})$ by Theorem~\ref{genPfibre}.
Then $L:=\bigcup_{i<\omega}L_i$ works.
\end{proof}

\begin{definition}[Universal base extension]
Given $\phi:X\to Y$, a dominant $B$-morphism of absolutely irreducible $B$-varieties over $k$, we call the $B$-morphism $\phi_L:X_L\to Y_L$ given by Corollary~\ref{universal} a {\em universal base extension for $\phi:X\to Y$}.
\end{definition}

\bigskip
\section{The bidifferential Dixmier-Moeglin equivalence problem}
\label{sect-bdme}
\noindent
We now extend the Poisson Dixmier-Moeglin equivalence problem to this setting of bidifferential algebra, and make some observations about it.

Fix a bidifferential field $(k,\bider)$ of characteristic zero, and $(R,\bider)$ an affine bidifferential $k$-algebra.
Recall that here affine means that $R$ is an integral domain which is finitely generated as a $k$-algebra.

Given an ideal $I$ of $R$, it is not hard to check that there is a bifferential ideal $J\subseteq I$ that contains all other bidifferential ideals of $I$ (this follows from the fact that if $J_1$ and $J_2$ are bidifferential ideals of $R$ then $J_1+J_2$ is bidifferential).
We call $J$ the {\em bidifferential core} of $I$.

We denote by $\Spec_B(R)$ the space of prime bidifferential ideals of $R$.
We introduce the following three conditions, inspired by the Poisson Dixmier-Moeglin equivalence problem:

\begin{definition}
Let $P\in\Spec_B(R)$. 
\begin{itemize}
\item[(a)]
$P$ is \emph{$B$-locally-closed} if 
$P\neq\displaystyle \bigcap_{P\, \subsetneq \, Q\, \in \, \Spec_B(R)} \; Q$.
\item[(b)]
$P$ is \emph{$B$-primitive} if it is the bidifferential core of a maximal ideal.
\item[(c)]
$P$ is \emph{$B$-rational} if the constants of $\Frac(R/P)$ are contained in $k^{\alg}$. 
\end{itemize}
\end{definition}

The last of these may require some explanation.
As $P$ is a bidifferential ideal we have an induced biderivation on $R/P$, which by Corollary~\ref{fractionext} extends uniquely to the fraction field.
Recall that by the {\em constants} of $\Frac(R/P)$ we mean those $\alpha\in\Frac(R/P)$ such that the associated hamiltonians $\{\alpha,\cdot\}$ and $\{\cdot,\alpha\}$ are identically zero on $\Frac(R/P)$.

Note that when $\bider$ is a Poisson bracket on $R$, these conditions coincide precisely with Poisson locally closed, Poisson primitive, and Poisson rational, considered by, for example, Brown and Gordon~\cite{BrownGordon}.
They are also naturally analogous to the differential version studied in~\cite{BLSM}.

We begin by showing that the expected implications hold also in this setting.
We include proofs for the sake of completeness, though there is no significant difficulty in passing from the Poisson case to the general bidifferential one.

But first a lemma.

\begin{lemma}\label{Bcomponents}
The radical of a bidifferential ideal is bidifferential.
If a radical differential ideal is the intersection of finitely many prime ideals none of which contains the other, then these prime ideals are bidifferential. 
\end{lemma}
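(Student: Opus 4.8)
The plan is to prove the two assertions separately, in both cases reducing to the corresponding facts about single derivations (the hamiltonians) and then quoting the standard differential-algebra results, exactly as was done earlier in the paper when handling irreducible components of $B$-subvarieties.

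For the first assertion, suppose $I$ is a bidifferential ideal and let $\sqrt I$ be its radical. Each hamiltonian $\{a,\cdot\}$ and $\{\cdot,a\}$ is a derivation preserving $I$, and the radical of an ideal stable under a derivation is again stable under that derivation (this is the classical fact that $\sqrt I$ is a differential ideal whenever $I$ is, valid in any ring containing $\mathbb Q$; see~\cite[\S1.2]{kolchin73}). Applying this to every hamiltonian gives $\{R,\sqrt I\}\subseteq\sqrt I$ and $\{\sqrt I,R\}\subseteq\sqrt I$, which is precisely the statement that $\sqrt I$ is bidifferential.

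For the second assertion, let $\sqrt I = P_1\cap\cdots\cap P_n$ be the irredundant prime decomposition of a radical bidifferential ideal; here the $P_j$ are exactly the minimal primes over $\sqrt I$, and by irredundancy none contains another. Fix a hamiltonian, say $\delta=\{a,\cdot\}$ for some $a\in R$; it is a derivation of the noetherian ring $R$ preserving the radical differential ideal $\sqrt I$. The standard result (again~\cite[\S1.2]{kolchin73}, or the version cited earlier in the paper for the components of a $B$-subvariety) says that the minimal primes over a radical $\delta$-ideal in a noetherian $\delta$-ring are themselves $\delta$-ideals; hence $\delta(P_j)\subseteq P_j$ for each $j$. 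Since this holds for every choice of $a\in R$ — that is, for every left hamiltonian — and symmetrically for every right hamiltonian $\{\cdot,a\}$, we conclude $\{R,P_j\}\subseteq P_j$ and $\{P_j,R\}\subseteq P_j$, so each $P_j$ is a prime bidifferential ideal.

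There is no serious obstacle here; the only point requiring any care is to make sure the hypothesis ``none contains the other'' is used to identify the $P_j$ with the \emph{minimal} primes over $\sqrt I$, so that the cited differential-algebra lemma (which is about minimal primes) applies. Everything else is a matter of invoking the single-derivation statements componentwise over the set of all hamiltonians.
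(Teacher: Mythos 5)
Your proof is correct and follows essentially the same route as the paper: reduce both assertions to the classical single-derivation facts from Kolchin (radical of a differential ideal is differential; the primes in an irredundant finite decomposition of a radical differential ideal are differential) and apply them to every hamiltonian $\{a,\cdot\}$ and $\{\cdot,a\}$. Your added remark about identifying the $P_j$ with the minimal primes is a reasonable point of care but does not change the argument.
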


\begin{proof}
This follows by the corresponding facts about derivations.
Namely, that in a differential ring the radical of a differential ideal is differential, and if a radical differential ideal is the intersection of finitely many primes, no one containing the other, then those primes are all also differential.
See~\cite[\S0.9 and \S1.2]{kolchin73}.
Now just apply this to each of the hamiltonians $\{r,\cdot\}$ and $\{\cdot, r\}$ for each $r\in R$.
\end{proof}

\begin{proposition}\label{usualimplications}
For any prime bidifferential ideal we have
$$\text{$B$-locally-closed}\implies \text{$B$-primitive},$$
and, if $k$ is contained in the constants of $(R,\bider)$ then we also have
$$\text{$B$-primitive} \implies \text{$B$-rational}.$$
\end{proposition}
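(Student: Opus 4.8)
The plan is to prove the two implications separately, in both cases exploiting the bidifferential core. A preliminary observation I would record first is that \emph{the bidifferential core of a maximal ideal is prime}: if $J_0$ is the core of a maximal ideal $\mathfrak{m}$, then $\sqrt{J_0}$ is bidifferential (Lemma~\ref{Bcomponents}) and sits inside $\mathfrak{m}$, so $J_0=\sqrt{J_0}$; writing $J_0$ as the intersection of its (finitely many, pairwise incomparable, hence bidifferential by Lemma~\ref{Bcomponents}) minimal primes and noting that $\mathfrak{m}$ contains one of them, that minimal prime must coincide with $J_0$.

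For $B$-locally-closed $\implies$ $B$-primitive I would argue as follows. Given a $B$-locally-closed $P$, pick $f\notin P$ lying in every bidifferential prime that strictly contains $P$, and pass to the affine domain $A:=(R/P)_f$. Since localisation is a lifting extension (Theorem~\ref{localisation}), extension and contraction of ideals set up an inclusion-preserving bijection between $\Spec_B(A)$ and the set of bidifferential primes of $R$ that contain $P$ and avoid $f$; by the choice of $f$ this latter set is exactly $\{P\}$, so $(0)$ is the only bidifferential prime of $A$. Now take any maximal ideal $\mathfrak{n}$ of $A$; its bidifferential core is a bidifferential prime, hence $(0)$. Pulling $\mathfrak{n}$ back to $R$ gives a maximal ideal $\mathfrak{m}\supseteq P$ with $f\notin\mathfrak{m}$ (its maximality in $R$ following from the Nullstellensatz), and a short computation — once more using that $R/P\hookrightarrow A$ is lifting, so that a bidifferential ideal of $R$ contained in $\mathfrak{m}$ and containing $P$ extends to a bidifferential ideal of $A$ inside $\mathfrak{n}$, which is forced to be $(0)$ — shows that the bidifferential core of $\mathfrak{m}$ is exactly $P$. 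Hence $P$ is $B$-primitive.

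For $B$-primitive $\implies$ $B$-rational, assuming $k$ lies among the constants, I would first replace $R$ by $R/P$ (which inherits $k$ in its constants), so that $P=(0)$, $R$ is a domain, and $\mathfrak{m}$ is a maximal ideal containing no nonzero bidifferential ideal. Let $c$ be a constant of $\Frac(R)$; since $k$ is among the constants, every $k$-polynomial in $c$ is again a constant. I would then use two facts: a principal ideal generated by a constant is bidifferential, and for a constant $c$ the \emph{ideal of denominators} $\mathfrak{d}_c:=\{b\in R:bc\in R\}$ is a nonzero bidifferential ideal of $R$ — bidifferentiality being immediate from $\{r,bc\}=c\{r,b\}$ and the symmetric identity. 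Now suppose, for contradiction, that $c$ is transcendental over $k$. If $c\notin R_{\mathfrak{m}}$ then $\mathfrak{d}_c\subseteq\mathfrak{m}$, so $\mathfrak{d}_c$ is a nonzero bidifferential ideal inside $\mathfrak{m}$, contradicting the hypothesis on $\mathfrak{m}$. Otherwise $c\in R_{\mathfrak{m}}$; let $q\in k[T]$ be the minimal polynomial over $k$ of the image of $c$ in $R_{\mathfrak{m}}/\mathfrak{m}R_{\mathfrak{m}}\cong R/\mathfrak{m}$, a finite extension of $k$ by the Nullstellensatz. Then $c':=q(c)$ is a nonzero constant lying in $\mathfrak{m}R_{\mathfrak{m}}$, the principal ideal $c'R_{\mathfrak{m}}$ is bidifferential and contained in $\mathfrak{m}R_{\mathfrak{m}}$, and its contraction along the (lifting) inclusion $R\hookrightarrow R_{\mathfrak{m}}$ is a nonzero bidifferential ideal of $R$ contained in $\mathfrak{m}$ — again a contradiction. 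Therefore every constant of $\Frac(R)$ is algebraic over $k$, i.e.\ $P$ is $B$-rational.

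The routine points — that preimages of bidifferential ideals under bidifferential maps are bidifferential, that $\mathfrak{d}_c$ is an ideal, and the bookkeeping in the core-equals-$P$ step — I would dispatch quickly. The one genuinely delicate point is the dichotomy in the second implication: a transcendental constant of $\Frac(R)$ need not be regular at $\mathfrak{m}$ (indeed no nonconstant element of $k(c)$ need be), so the naive ``evaluate $c$ at $\mathfrak{m}$ and subtract'' strategy fails by itself; the ideal-of-denominators device is precisely what handles the irregular case and makes the argument go through.
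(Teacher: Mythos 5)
Your proof is correct, but both halves take technically different routes from the paper's. For the first implication, the paper works directly in $R/P$: it takes $J$ to be the intersection of all nonzero bidifferential primes, picks a maximal ideal $\mathfrak m$ not containing $J$, and shows that any nonzero bidifferential ideal inside $\mathfrak m$ would have a prime component (bidifferential by Lemma~\ref{Bcomponents}) lying in $\mathfrak m$ yet containing $J$ --- a contradiction. You instead localise at an $f$ witnessing local closedness and invoke the primality of the bidifferential core of a maximal ideal; that preliminary observation is correct, proved exactly as you sketch via Lemma~\ref{Bcomponents}, and the two arguments carry the same content in different packaging (yours buys a cleaner statement, namely that the only bidifferential prime of $(R/P)_f$ is $(0)$, at the cost of one extra lemma). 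The more substantive divergence is in the second implication. The paper first proves, by an iterated-hamiltonian computation starting from $0=\{c,a/b\}$, that a constant $c$ of $\Frac(R)$ admits a representation $a/b$ with $b\notin\mathfrak m$ (otherwise the bidifferential ideal generated by $b$ sits inside $\mathfrak m$), and only then passes to $R_b$ and a nonconstant polynomial $q$ with $q(c)\in\mathfrak m R_b$. Your conductor ideal $\mathfrak d_c=\{b\in R: bc\in R\}$ replaces that iteration: the identity $\{r,bc\}=c\{r,b\}$ for constants $c$ makes $\mathfrak d_c$ visibly bidifferential, and the dichotomy ``$c\in R_{\mathfrak m}$ or $\mathfrak d_c\subseteq\mathfrak m$'' does the rest; the endgame (a nonzero constant $q(c)$ generating a bidifferential principal ideal inside the maximal ideal of the localisation, contracted back to $R$) coincides with the paper's. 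This is arguably cleaner and closer to the classical Poisson-primitive arguments. One cosmetic remark: contraction of a bidifferential ideal along a bidifferential homomorphism is automatically bidifferential and does not require the extension $R\hookrightarrow R_{\mathfrak m}$ to be lifting, so that invocation is unnecessary.
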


\begin{proof}
Working in the quotient ring, it suffices to consider the case of the prime bidifferential ideal $(0)$. 

Assume $(0)$ is $B$-locally-closed. Let
$$J=\bigcap_{(0) \neq \, Q\, \in \, \Spec_B(R)} \; Q.$$
Then $J$ is a nonzero radical bidifferential ideal. Let $\mathfrak m$ be a maximal ideal of $R$ not containing $J$. We claim that $(0)$ is the bidifferential core of $\mathfrak m$ (which shows that $(0)$ is $B$-primitive). For a contradiction, suppose $I$ is a nonzero bidifferential ideal contained in $\mathfrak m$. By Lemma~\ref{Bcomponents}, the radical of $I$ is bidifferential and its prime components are bidifferential. One of these components must be contained in $\mathfrak m$. However, all these components contain $J$, contradicting the fact that $J\not\subseteq \mathfrak m$.

Now assume that $k$ is contained in the constants of $(R,\bider)$.
Suppose that $(0)$ is $B$-primitive.
Namely, there is a maximal ideal $\mathfrak m$ with bidifferential core $(0)$.
Let $r$ be a constant in $(\Frac(R),\bider)$.
We need to show that $r\in k^{\alg}$.

We claim that there are $a,b\in R$ with $r=a/b$ and $b\notin \mathfrak m$.
Assume on the contrary that for every representation of $r=a/b$ we have $b\in \mathfrak m$. 
Our goal is to deduce a contradiction to $(0)$ being the differential core of~$\mathfrak m$ by showing that the bidifferential ideal generated by $b$ is contained in~$\mathfrak m$.
For this it suffices to show that if you iteratively apply hamiltonians to $b$ then you stay in~$\mathfrak m$.
By symmetry we can consider only hamiltonians of the form $\{c,\cdot\}$ with $c\in R$.
Now, since $r$ is a constant, we get
$$0=\{c,r\}=\{c,a/b\}=\frac{\{c,a\}b-a\{c,b\}}{b^2}.$$
Then, either $\{c,b\}=0$ or $r=\frac{a}{b}=\frac{\{c,a\}}{\{c,b\}}$.
In either case, $\{c,b\}\in \mathfrak m$.
Moreover, if $\{c,b\}=0$ then of course further applications of hamiltonians will stay in~$\mathfrak m$, and if $\{c,b\}\neq 0$ then $r=\frac{\{c,a\}}{\{c,b\}}$ and so can repeat the argument with another hamiltonian.
hence, applying iterated hamltonians to $b$ stays in~$\mathfrak m$, as desired.

We can thus write $r=a/b$ such that $b\notin \mathfrak m$.
Work in the localisation $R_b$ on which we can uniquely extend the biderivation by Theorem~\ref{localisation}.
Since $b\notin \mathfrak m$, we have that $\mathfrak m R_b$ is a maximal ideal of $R_b$, and so $R_b/\mathfrak m R_b$ is an algebraic extension of~$k$.
It follows that there is a nonconstant polynomial polynomial $P(t)\in k[t]$ such that $P(r)\in\mathfrak m R_b$.
Note that the constants of $(R_b,\bider)$ contain $k$ and $r$, and hence $\{s,P(r)\}=\{P(r),s\}=0$ for all $s\in R_b$.
It follows that the bidifferential ideal generated by $P(r)$ in $(R_b,\bider)$ is just the principal ideal $P(r)R_b\subseteq\mathfrak m R_b$.
But this implies that the contraction $P(r)R_b\cap R$ is a bidifferential ideal of $(R,\bider)$ contained in~$\mathfrak m$.
As the bidifferential core of $\mathfrak m$ is trivial, we must have $P(r)=0$, proving that $r\in k^{\alg}$.
\end{proof}

We are therefore lead to ask:

\begin{question}[Bidifferential Dixmier-Moeglin Equivalence -- BDME]
\label{BDME}
For which affine bidifferential $k$-algebras is it the case that that every $B$-rational prime bidifferential ideal is $B$-locally closed?
\end{question}

An answer to this question would resolve the as yet open Poisson DME problem.
It is known that the PDME does not hold of all affine Poisson $k$-algebras, see~\cite{BLLSM}.
It is conjectured that the PDME does hold for Poisson-Hopf $k$-algebras, and some partial results have been obtained, see~\cite{LLS}.
The differential analogue of the DME has also been studied, and various results, including its failure in general and its truth in the Hopf case, have been established, see~\cite{BLSM}.

It may be worth pointing out that the following weakening of ``rational implies locally-closed" was shown for affine Poisson algebras in~\cite[Theorem~7.1]{BLLSM}, but the proof goes through more or less verbatum in the general bidifferential setting:

\begin{fact}
\label{bjouanolou}
Suppose $(R,\bider)$ is an affine bidifferential $k$-algebra with constants extending $k$.
If $P\in\Spec_B(R)$ is rational then there are only finitely many prime bidifferential ideals containing $P$ of height equal to $\operatorname{ht}P+1$.
\end{fact}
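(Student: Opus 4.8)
The plan is to reduce immediately to the case $P = (0)$ by passing to $R/P$, so that we must show: if $(R,\bider)$ is an affine bidifferential $k$-algebra with $k$ in the constants and $(0)$ is $B$-rational, then there are only finitely many height-one prime bidifferential ideals. First I would recall that ``$(0)$ is $B$-rational'' means the constants of $\Frac(R)$ are contained in $k^{\alg}$, and in fact (arguing as in the proof of Proposition~\ref{usualimplications}, or just base-changing) one may as well assume $k = k^{\alg}$ so that the constant field of $\Frac(R)$ is exactly $k$. The overall strategy is the classical Jouanolou-type argument: a height-one prime bidifferential ideal $Q$ determines, after inverting suitable elements, a principal ideal $(g)$, and the logarithmic-derivative-type data $\{r,g\}/g$ for $r \in R$ lives in $R$ (or a fixed localisation); if there were infinitely many such $Q$, the associated logarithmic derivatives would be forced into a finite-dimensional $k$-space, producing a nontrivial constant and contradicting $B$-rationality.

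The key steps, in order, are as follows. (1) Fix a height-one prime bidifferential ideal $Q$. Since $R$ is an affine domain it is a noetherian Krull domain, so $R_Q$ is a DVR; pick $g \in R$ generating $QR_Q$. For each $c \in R$, the hamiltonian $\{c,\cdot\}$ preserves $Q$ and hence, localising, preserves $QR_Q = gR_Q$; therefore $\{c,g\} \in gR_Q$, i.e.\ $\{c,g\}/g \in R_Q$. Symmetrically $\{g,c\}/g \in R_Q$. (2) By noetherianity one can choose finitely many elements $c_1,\dots,c_N$ whose hamiltonians $\{c_i,\cdot\}$ and $\{\cdot,c_i\}$ generate all hamiltonians as an $R$-module (every hamiltonian is an $R$-valued derivation, and the module of such derivations is finitely generated since $R$ is affine); so checking membership in a bidifferential ideal reduces to applying these finitely many hamiltonians. (3) The crucial linear-algebra step: one considers, over the common fraction field or a suitable common localisation, the finitely many rational functions $\{c_i,g\}/g$ and $\{g,c_i\}/g$ attached to each $Q$; following \cite[Theorem~7.1]{BLLSM} one shows these all lie in a fixed finite-dimensional $k$-vector subspace $V$ of $\Frac(R)$ (coming from a fixed finite set of divisors supporting the relevant differentials), so that if there were infinitely many distinct $Q$ the corresponding tuples of logarithmic derivatives would satisfy a nontrivial $k$-linear relation. (4) Unwinding that relation yields an element $h \in \Frac(R)$ with $\{c,h\} = 0 = \{h,c\}$ for all $c \in R$ but $h \notin k$ — a nonconstant constant — contradicting $B$-rationality (using $k = k^{\alg}$). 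Hence only finitely many such $Q$ exist.

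The main obstacle I expect is step~(3): making precise the finite-dimensional $k$-space in which all the logarithmic derivatives $\{c_i,g_Q\}/g_Q$ live, uniformly over all height-one primes $Q$. In the Poisson setting of \cite[Theorem~7.1]{BLLSM} this is handled by fixing a finite affine cover / a finite set of discrete valuations of $\Frac(R)$ and noting that each relevant differential form has poles only along a fixed finite set, so the residues land in a fixed finite-dimensional space; the content of the present Fact is precisely that this argument ``goes through more or less verbatim'' because it only uses that each $\{c,\cdot\}$ is an $R$-valued derivation and that $Q$ is preserved by all hamiltonians — skew-symmetry, bilinearity over $k$, and the Jacobi identity are never invoked. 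So my write-up would quote the structure of the argument from \cite{BLLSM}, carefully isolate the two facts actually used (each hamiltonian is a derivation into $R$; $Q$ bidifferential $\Rightarrow$ $QR_Q$ stable under all hamiltonians), and then invoke the linear-algebra/residue bound verbatim, flagging only the (routine) replacement of ``Poisson bracket'' by ``biderivation'' throughout. A secondary, minor point to be careful about is reducing to $k = k^{\alg}$: one should check that $B$-rationality and the finiteness conclusion are both insensitive to the algebraic base extension guaranteed by Proposition~\ref{alg-bvar}, which is immediate since $k[X]\otimes_k k^{\alg}$ is differentially trivial over $k[X]$ and the height-one primes correspond up to the finite-to-one action of $\mathrm{Gal}(k^{\alg}/k)$.
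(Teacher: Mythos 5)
Your approach is the same as the paper's: no proof is given here beyond the assertion that the argument for \cite[Theorem~7.1]{BLLSM} carries over, and your proposal is exactly an unpacking of that argument --- reduce to $P=(0)$, note that a prime is bidifferential if and only if it is invariant under the finitely many hamiltonians $\{c_i,\cdot\}$ and $\{\cdot,c_i\}$ attached to $k$-algebra generators $c_1,\dots,c_N$ of $R$ (by the Leibniz rule in the other slot; the hypothesis that $k$ lies in the constants is what makes these $k$-linear derivations), and then run the Jouanolou-type logarithmic-derivative argument to produce a common first integral $h$, which, as you observe, is then a constant of $\Frac(R)$, contradicting $B$-rationality. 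Two specific points in your write-up need repair, though neither affects the strategy. First, step~(1) claims that an affine domain is a Krull domain, so that $R_Q$ is a DVR at a height-one prime; this is false without normality (consider $k[x,y]/(y^2-x^3)$ at the origin). The standard fix is to pass to the normalisation $\tilde R$: in characteristic zero every derivation of $R$ extends uniquely to $\tilde R\subseteq\Frac(R)$, hence so does $\bider$ via Proposition~\ref{usederivations2}; each hamiltonian preserves $Q\tilde R$ and therefore its radical and its minimal primes (Lemma~\ref{Bcomponents}), so the finitely many height-one primes of $\tilde R$ over a bidifferential $Q$ are again bidifferential, and infinitely many such $Q$ in $R$ would yield infinitely many in $\tilde R$. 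Second, the reduction to $k=k^{\alg}$ is not quite ``immediate'': $R\otimes_kk^{\alg}$ need not be a domain, so one must pass to a quotient by a (bidifferential) minimal prime and check that $B$-rationality survives. It is cleaner to skip this step entirely and invoke the differential finiteness theorem underlying \cite[Theorem~7.1]{BLLSM} (finitely many height-one primes invariant under finitely many $k$-linear derivations on an affine domain whose constants are algebraic over $k$) applied directly to the $2N$ hamiltonians over the original $k$; that statement already delivers a first integral outside $k^{\alg}$, which is what the contradiction with $B$-rationality requires.
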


\bigskip
\subsection{An image and generic fibre approach}
We conclude by illustrating how the constructions of  Section~\ref{sect-balggeom} may be of use in this problem, and why passing to general bidifferential algebras over possibly nonconstant base fields may be useful even if one is primarily interested in the Poisson case.

First, to situate the problem in a geometric setting, let us say for short that an irreducible $B$-variety, $X$, over $k$, is {\em $B$-locally-closed} if in $(k[X],\bider)$ the zero ideal is $B$-locally closed.
Geometrically, this is the property that the union of its proper $B$-subvarieties is not Zariski dense.

\begin{lemma}
An irreducible $B$-variety is $B$-locally-closed if and only if it has only finitely many maximal proper irreducible $B$-subvarieties.
\end{lemma}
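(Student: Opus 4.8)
The plan is to unwind the definition of $B$-locally-closed and relate the failure of Zariski density of proper $B$-subvarieties to the finiteness of maximal proper irreducible $B$-subvarieties. Write $X$ for the $B$-variety and work in $R := k[X]$, so that $X$ being $B$-locally-closed means the zero ideal is $B$-locally-closed in $(R,\bider)$, i.e.
$$(0) \neq \bigcap_{(0) \subsetneq Q \in \Spec_B(R)} Q.$$
First I would observe that the prime bidifferential ideals of $R$ that are minimal among the nonzero ones correspond exactly to the maximal proper irreducible $B$-subvarieties of $X$: a proper irreducible $B$-subvariety is $V(Q)$ for $Q \in \Spec_B(R)$ nonzero, and maximality of the subvariety is minimality of $Q$. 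So the statement to prove becomes: $\bigcap_{(0)\subsetneq Q} Q \neq (0)$ if and only if there are only finitely many minimal nonzero primes in $\Spec_B(R)$.

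For the ``if'' direction, suppose there are only finitely many minimal nonzero prime bidifferential ideals $Q_1,\dots,Q_n$. Every nonzero $Q \in \Spec_B(R)$ contains one of them (this is the key point, and I address it below), so the intersection over all nonzero $Q$ equals $Q_1 \cap \cdots \cap Q_n$, which is nonzero because $R$ is a domain and each $Q_i$ is nonzero (a finite intersection of nonzero ideals in a domain is nonzero, e.g.\ pick $0 \neq a_i \in Q_i$ and note $a_1 \cdots a_n \neq 0$ lies in the intersection). Hence $(0)$ is $B$-locally-closed.

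For the ``only if'' direction, suppose $J := \bigcap_{(0)\subsetneq Q} Q \neq (0)$. Pick $0 \neq a \in J$. Any minimal nonzero prime bidifferential ideal $Q$ contains $J$, hence contains $a$, so it is a minimal prime over the (possibly non-bidifferential) ideal $(a)$ — wait, not quite, since $Q$ need only be minimal among bidifferential primes, not among all primes over $(a)$. To fix this I would instead argue as follows: let $I$ be the bidifferential ideal generated by $a$; then $I$ is a nonzero bidifferential ideal contained in every nonzero $Q \in \Spec_B(R)$, so in fact $I \subseteq J$ and, replacing $J$ by $\sqrt{I}$ (bidifferential by Lemma~\ref{Bcomponents}), every minimal prime bidifferential ideal above $(0)$ contains $\sqrt{I}$ and hence is a minimal prime of the radical bidifferential ideal $\sqrt{I}$. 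By noetherianity $\sqrt{I}$ has only finitely many minimal primes, and by Lemma~\ref{Bcomponents} these are all bidifferential. So there are only finitely many candidates to be minimal nonzero prime bidifferential ideals, giving finiteness of maximal proper irreducible $B$-subvarieties.

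The main obstacle is precisely the step I flagged: showing that every nonzero $Q \in \Spec_B(R)$ contains some \emph{minimal} nonzero prime bidifferential ideal, and more basically that minimal nonzero prime bidifferential ideals exist and are finite in number. The clean way to get this, and the one I would adopt throughout, is to fix once and for all a nonzero $a \in J$ (assuming $X$ is $B$-locally-closed), pass to $I = $ the bidifferential ideal it generates and then to the radical bidifferential ideal $\sqrt I$; its finitely many minimal primes (finite by noetherianity of $R$, bidifferential by Lemma~\ref{Bcomponents}) are exactly the maximal proper irreducible $B$-subvarieties, since any proper irreducible $B$-subvariety $V(Q)$ has $Q \supseteq J \supseteq I$ hence $Q \supseteq \sqrt I$ hence $Q$ contains one of these finitely many minimal primes — so $V(Q)$ is contained in one of the corresponding finitely many $B$-subvarieties, which are therefore the maximal ones. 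Conversely if $X$ is not $B$-locally-closed then $J = (0)$ and one needs to exhibit infinitely many maximal proper irreducible $B$-subvarieties; here I would argue by contradiction — if there were only finitely many, their union would be a proper closed subset (as each is proper and $X$ is irreducible), but every proper irreducible $B$-subvariety is contained in one of them, so the union of \emph{all} proper $B$-subvarieties would be this proper closed set, contradicting $J = (0)$, which says that union is dense.
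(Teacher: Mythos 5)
Your argument is correct and is essentially the paper's own proof: both directions come down to taking the minimal primes of a nonzero radical bidifferential ideal contained in every nonzero bidifferential prime, invoking Lemma~\ref{Bcomponents} to see these primes are bidifferential, and using the (standard, finite-Krull-dimension) fact that every proper irreducible $B$-subvariety lies in a maximal one. The only difference is cosmetic: the paper takes the minimal primes of $J=\bigcap Q$ directly, since $J$, being an intersection of bidifferential primes, is already a radical bidifferential ideal, so your detour through the bidifferential ideal generated by a single $a\in J$ and its radical is unnecessary.
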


\begin{proof}
It is clear that if $X$ has only finitely many maximal proper irreducible $B$-subvarieties then $X$ must be $B$-locally closed, as the union of these would be a proper Zariski closed set containing all the proper $B$-subvarieties.

Suppose $X$ is $B$-locally-closed and let
$$I:=\bigcap\{Q\subseteq k[X]:Q\neq 0\text{ prime bidifferential ideal}\}.$$
Let $P_1,\dots,P_\ell$ be the minimal associated prime ideals of $I$, and denote by $Z_i$ the closed subvariety of $X$ defined by $P_i$.
By Lemma~\ref{Bcomponents}, we get that each $Z_i$ is a $B$-subvariety.
By definition, every proper irreducible $B$-subvariety of $X$ is contained in some $Z_i$.
Since $I$, and hence each $P_i$, is nonzero, we have that $Z_1,\dots,Z_\ell$ are the maximal irreducible proper $B$-subvarieties of $X$.
\end{proof}

The following gives an ``image and generic fibre" characterisation of $B$-locally closed.

\begin{proposition}
\label{fibreimage-lc}
Suppose $\phi:X\to Y$ is a dominant $B$-morphism of irreducible $B$-varieties over $k$.
Assume $Y$ is $B$-locally-closed.
Then the following are equivalent:
\begin{itemize}
\item[(i)]
$X$ is $B$-locally-closed
\item[(ii)]
The generic $B$-fibre of $\phi$ over $k(Y)$ is $B$-locally-closed.
\end{itemize}
\end{proposition}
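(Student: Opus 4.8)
The plan is to recast $B$-local-closedness as the existence of a single nonzero element lying in every nonzero prime bidifferential ideal of the coordinate ring, and then to split those primes according to whether or not the corresponding $B$-subvariety of $X$ dominates $Y$. First I would record that, by definition applied to the zero ideal, an irreducible $B$-variety $V$ is $B$-locally-closed precisely when some nonzero element of $k[V]$ belongs to every nonzero prime bidifferential ideal of $k[V]$ (vacuously so if there are none). Next I would identify the coordinate ring of the generic $B$-fibre: with $L:=k(Y)$ and $X_\alpha$ the generic $B$-fibre over $L$ produced by Theorem~\ref{genPfibre}, one checks as in that proof that $L[X_\alpha]=k[X]\otimes_{k[Y]}L$, which is the localisation $S^{-1}k[X]$, where $S:=k[Y]\setminus\{0\}$ acts on $k[X]$ through the embedding $\phi^*$ (injective since $\phi$ is dominant); moreover, since the composite $k[X]\to L[X_L]\to L[X_\alpha]$ is bidifferential, the uniqueness clause of Theorem~\ref{localisation} forces the biderivation on $L[X_\alpha]$ to be the localisation biderivation of $(k[X],\bider)$. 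Finally, partition the set of nonzero members of $\Spec_B(k[X])$ into $\mathcal P_v$, those $P$ with $P\cap k[Y]\neq 0$, and $\mathcal P_h$, those with $P\cap k[Y]=0$.

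For the vertical primes: if $P\in\mathcal P_v$ then $P\cap k[Y]$ is a nonzero prime bidifferential ideal of $k[Y]$, being the contraction of a prime bidifferential ideal along the bidifferential map $\phi^*$. Since $Y$ is $B$-locally-closed there is a nonzero $g\in k[Y]$ lying in every such ideal, so $\phi^*(g)$ is a nonzero element of $k[X]$ lying in every member of $\mathcal P_v$. For the horizontal primes: the localisation map $\lambda\colon k[X]\to S^{-1}k[X]$ induces, by the usual contraction/extension correspondence, an inclusion-preserving bijection between $\mathcal P_h$ and the nonzero prime bidifferential ideals of $S^{-1}k[X]$ --- extension preserves bidifferentiality because localisation is a lifting extension (Theorem~\ref{localisation}), and contraction does because $\lambda$ is bidifferential, while nonzeroness and primality transfer by standard localisation facts. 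A direct computation with $\lambda$ then yields: a nonzero element of $k[X]$ lies in every member of $\mathcal P_h$ if and only if a nonzero element of $S^{-1}k[X]$ lies in every nonzero prime bidifferential ideal of $S^{-1}k[X]$, that is, if and only if $X_\alpha$ is $B$-locally-closed.

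To assemble the two implications: for (i) $\Rightarrow$ (ii), a nonzero $h\in k[X]$ witnessing that $X$ is $B$-locally-closed lies in particular in every member of $\mathcal P_h$, so by the above $X_\alpha$ is $B$-locally-closed. For (ii) $\Rightarrow$ (i), if $X_\alpha$ is $B$-locally-closed then some nonzero $h\in k[X]$ lies in every member of $\mathcal P_h$, and then the nonzero element $\phi^*(g)\,h$ of the integral domain $k[X]$ lies in every member of $\mathcal P_v\cup\mathcal P_h$, i.e. in every nonzero prime bidifferential ideal of $k[X]$, so $X$ is $B$-locally-closed. I expect the main work to lie in the clean identification of $L[X_\alpha]$ with $S^{-1}k[X]$ as bidifferential rings and in checking that contraction and extension along $\lambda$ set up the stated bijection compatibly with ``some nonzero element lies in all of them''; the hypothesis that $Y$ is $B$-locally-closed enters exactly through the vertical primes, and only in the direction (ii) $\Rightarrow$ (i).
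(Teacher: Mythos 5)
Your proof is correct, and it rests on the same underlying decomposition as the paper's: split the nonzero prime bidifferential ideals of $k[X]$ according to whether the corresponding subvariety dominates $Y$, use the hypothesis on $Y$ only for the non-dominant (``vertical'') ones, and match the dominant ones with the $B$-subvarieties of the generic fibre. The execution, however, is genuinely different. The paper first proves an auxiliary lemma (resting on Lemma~\ref{Bcomponents}, hence on Kolchin's result about prime components of radical differential ideals) that $B$-local-closedness is equivalent to having only finitely many maximal proper irreducible $B$-subvarieties, then sets up the containment-preserving bijection $X'\mapsto X'_L\cap X_\alpha$ between irreducible $B$-subvarieties of $X$ dominating $Y$ and those of $X_\alpha$, and argues about maximal elements on each side. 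You bypass both the auxiliary lemma and the finiteness argument by working directly with the definition (``some nonzero element lies in every nonzero prime bidifferential ideal'') and, crucially, by identifying $(L[X_\alpha],\bider)$ with the localisation $\Sigma^{-1}k[X]$, $\Sigma=\phi^*(k[Y]\setminus\{0\})$, carrying its unique biderivation from Theorem~\ref{localisation} (uniqueness via Lemma~\ref{mdt}); the horizontal-prime correspondence then becomes the standard extension/contraction bijection for localisations, compatible with bidifferentiality because localisation is lifting and the localisation map is bidifferential. This identification of the generic fibre's coordinate ring, which the paper only reaches implicitly through the genericity of $\alpha$, is worth making explicit: it shows the bidifferential structure on the generic $B$-fibre is canonically determined by $(k[X],\bider)$ and $\phi^*$. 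Your explicit witness $\phi^*(g)\,h$ plays exactly the role of the paper's $X_0\cup\phi^{-1}(Y_0)$ and makes equally transparent that the hypothesis on $Y$ enters only in (ii)$\Rightarrow$(i), through the vertical primes.
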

\begin{proof}
Let $L:=k(Y)$ equipped with the induced bidifferential field structure.
Fix compatible $B$-variety structures on $X_L$ and $Y_L$ such that $\phi_L:X_L\to Y_L$  is a $B$-morphism, and fix $\alpha\in Y_L^\sharp(L)$ generic over $k$ such that $X_\alpha:=\phi_L^{-1}(\alpha)$ is a $B$-subvariety of $X_L$.
This is possible because of Theorem~\ref{genPfibre}.

Note that $k(\alpha)=k(Y)=L$.
The generic $B$-fibre referred to in~(ii) is $X_\alpha$.

We will show, first of all, that there is a bijective containment preserving correspondence between between
$$\mathcal X:=\{\text{irreducible $B$-subvarieties of $X$ over $k$ that map dominantly onto $Y$}\}$$
 and
 
$$\mathcal X_\alpha:=\{\text{irreducible $B$-subvarieties of $X_\alpha$ over $L$}\}$$ given by
$$X'\longmapsto X'_\alpha:=X'_L\cap X_\alpha.$$
That this is true if we replace ``$B$-subvariety" by ``subvariety" follows from the fact that $\alpha$ is generic in $Y$ over $k$.
So what we are claiming is that $X'$ is a $B$-subvariety if and only if $X'_\alpha$ is.
Indeed, suppose $X'\in\mathcal X$.
Then, as the base extension $X_L$ has a compatible $B$-variety structure, $X'_L$ is a $B$-subvariety of $X_L$.
Hence, so is $X'_L\cap X_\alpha=X'_\alpha$.
Conversely, suppose $Z\in\mathcal X_\alpha$.
Let $X'\subseteq X$ be the subvariety over $k$ with $X'_\alpha=Z$.
Note that $I(X')=I(Z)\cap k[X]$.
So, as $k[X]$ is a bidifferential subring of $L[X_L]$, we have that $X'$ is a $B$-subvariety.
We thus have a bijective correspondence as claimed.

Assume now that~(i) holds.
Note that the maximal elements of $\mathcal X$ are also maximal among {\em all} irreducible $B$-subvarieties of $X$ (over $k$), as any $B$-subvariety containing an element of $\mathcal X$ will also map dominantly onto $Y$.
Hence, by~(i), $\mathcal X$ has only finiteley many maximal elements.
By the bijective correspondence, $\mathcal X_\alpha$ has only finitely many maximal elements, thus establishing~(ii).

Now, suppose~(ii) holds so that $\mathcal X_\alpha$ has only finitely many maximal elements.
The bijective correspondence gives us that $\mathcal X$ has only finitely many maximal elements.
Let $X_0$ be the union of these.
So every proper irreducible $B$-subvariety of $X$ that maps dominantly onto $Y$ is contained in $X_0$.
Let $Y_0$ be a proper algebraic subvariety of $Y$ that contains all proper $B$-subvarieties of $Y$; it exists by our assumption on $Y$.
Note that the Zariski closure of the image of a $B$-subvariety under a $B$-morphism is a $B$-subvariety; this is because the intersection of a bidifferential ideal with a bidifferntial subring is a bidifferential ideal of the subring.
Hence, those $B$-subvarieties of $X$ that do not project dominantly onto $Y$ must land in $Y_0$.
It follows that every proper $B$-subvariety of $X$ lands in $X_0\cup\phi^{-1}(Y_0)$.
The latter being a proper subvariety of $X$ by irreducibility, this proves~(i).
\end{proof}

Similarly, we can say that an irreducible $B$-variety over $k$ is {\em $B$-rational} if the zero ideal in $k[X]$ is $B$-rational.
Geometrically, this means that any rational function $f\in k(X)$ such that $\{f,-\}\equiv 0$ on $k(X)$  must be algebraic over $k$.
It is even easier to see that this also has an ``image and generic fibre" characterisation:

\begin{proposition}
\label{fibreimage-r}
Suppose $\phi:X\to Y$ is a dominant $B$-morphism of irreducible $B$-varieties over $k$.
Assume that $Y$ is $B$-rational.
Then the following are equivalent:
\begin{itemize}
\item[(i)]
$X$ is $B$-rational.
\item[(ii)]
The generic $B$-fibre of $\phi$ over $k(Y)$ is $B$-rational.
\end{itemize}
\end{proposition}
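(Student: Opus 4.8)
The plan is to reduce each of~(i) and~(ii) to a statement about the constants of $k(X)$, and then to play the two algebraic closures $k^{\alg}$ and $k(Y)^{\alg}$ off against one another; the hypothesis on $Y$ enters only to bridge between them.

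First I would set up a dictionary. Put $L:=k(Y)$ with its induced bidifferential structure. Unwinding the construction of the generic $B$-fibre in Theorem~\ref{genPfibre} (as is implicit in the proof of Proposition~\ref{fibreimage-lc}), the coordinate ring $L[X_\alpha]$ is the localisation $\phi^*(k[Y]\setminus\{0\})^{-1}k[X]\subseteq k(X)$, carrying the biderivation it inherits from $(k[X],\bider)$, and under this identification the scalar field $L$ is the subfield $\phi^*(k(Y))$ of $k(X)$. (That $\phi^*\colon k(Y)\to k(X)$ is a bidifferential embedding already follows from $\phi$ being a $B$-morphism together with Corollary~\ref{fractionext}.) Write $K_0:=\phi^*(k(Y))$; it is a bidifferential subfield of $k(X)$, it contains $k$ since $\phi$ is over $k$, and it is isomorphic as a bidifferential field to $k(Y)$. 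Let $C$ be the constants of $k(X)$. Then $L(X_\alpha)=k(X)$ as bidifferential fields, so the constants of $L(X_\alpha)$ are exactly $C$, and $L^{\alg}$ inside $L(X_\alpha)$ is the set of elements of $k(X)$ algebraic over $K_0$. Hence: condition~(i) says $C\subseteq k^{\alg}$; condition~(ii) says every element of $C$ is algebraic over $K_0$; and ``$Y$ is $B$-rational'' says every constant of $K_0$ is algebraic over $k$.

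With this dictionary, (i)$\implies$(ii) is immediate: $k\subseteq K_0$, so anything algebraic over $k$ is a fortiori algebraic over $K_0$; note this direction does not use the hypothesis on $Y$. For (ii)$\implies$(i) I would descend through a minimal polynomial. Fix $c\in C$; by~(ii) it is algebraic over $K_0$, so let $P(t)=t^n+a_{n-1}t^{n-1}+\cdots+a_0\in K_0[t]$ be its minimal polynomial over $K_0$. Given $s\in K_0$, apply the derivation $\{\cdot,s\}$ of $k(X)$ to the identity $P(c)=0$; since $\{c,s\}=0$ (as $c$ is a constant), this collapses to $\sum_{i=0}^{n-1}\{a_i,s\}\,c^i=0$. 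Each $\{a_i,s\}$ lies in $K_0$ because $K_0$ is a bidifferential subfield, so this is a polynomial over $K_0$ of degree $<n$ vanishing at $c$, whence $\{a_i,s\}=0$ for all $i$ by minimality of $P$. Applying the derivation $\{s,\cdot\}$ the same way and using $\{s,c\}=0$ gives $\{s,a_i\}=0$ for all $i$. As $s\in K_0$ was arbitrary, each $a_i$ is a constant of $K_0$, hence algebraic over $k$ by the hypothesis on $Y$; so $P\in k^{\alg}[t]$, and as $k^{\alg}$ is algebraic over $k$ and $c$ is a root of $P$, we get $c\in k^{\alg}$. As $c\in C$ was arbitrary, $C\subseteq k^{\alg}$, which is~(i).

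I expect the only delicate point to be the bookkeeping in the second paragraph --- pinning down $L(X_\alpha)$ as $k(X)$ with the biderivation coming from $k[X]$, and identifying the scalar field $L$ with the copy $\phi^*(k(Y))$ sitting inside it. Once that identification is recorded, the equivalence reduces to the short minimal-polynomial descent above, which (consistent with the ``even easier'' remark preceding the proposition) is the only substantive step.
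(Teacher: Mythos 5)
Your proof is correct and follows essentially the same route as the paper: identify $L(X_\alpha)$ with $k(X)$, note that (i)$\implies$(ii) is then immediate since $k^{\alg}\subseteq L^{\alg}$, and for the converse push a constant of $k(X)$ down into $k^{\alg}$ using the $B$-rationality of $Y$. If anything you are more careful than the paper at one point: the paper passes directly from ``$f$ is a constant of $L(X_\alpha)$'' to ``$f\in L$'', whereas a priori (ii) only gives that $f$ is algebraic over $L$, and your minimal-polynomial descent --- showing the coefficients of the minimal polynomial over $K_0$ are themselves constants of $K_0$, hence in $k^{\alg}$ --- is exactly the step needed to close that small gap.
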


\begin{proof}
As before, let $L:=k(Y)$ and use Theorem~\ref{genPfibre} to fix compatible $B$-variety structures on $X_L$ and $Y_L$ such that $\phi_L:X_L\to Y_L$  is a $B$-morphism, and fix $\alpha\in Y_L^\sharp(L)$ generic over $k$ such that $X_\alpha:=\phi_L^{-1}(\alpha)$ is a $B$-subvariety of $X_L$.
So $L=k(\alpha)$ and $X_\alpha$ is our generic $B$-fibre referred to in~(ii).

Note that $k(X)=L(X_\alpha)$.
This immediately shows that~(i) implies~(ii).
For the converse, assume~(ii) and let $f\in k(X)$ be such that $\{f,-\}\equiv 0$.
Then we must have $f\in L$.
But $L=k(Y)$ and so our assumption on $Y$ implies that $f\in k^{\alg}$, establishing~(ii).
\end{proof}

Propositions~\ref{fibreimage-lc} and~\ref{fibreimage-r} suggest an inductive approach to Question~\ref{BDME}.

It is also worth observing that, even if we start with a Poisson variety $X$, the $B$-variety structure we are considering on the generic  fibres $X_\alpha$ in these propositions will not be Poisson and will be over nonconstant base fields.

\appendix

\bigskip
\section{Extending derivations}

\noindent
We review some basic facts about extending derivations.
We could not find good references because we work in the slightly more general, and somewhat unusual, multisorted setting.
That is, we work with the collection of pairs $(R,M)$ where $R$ is a (commutative unitary) ring and $M$ is an $R$-module.
Given such $(R,M)$, we consider $M$-valued derivations on $R$, namely additive maps $d:R\to M$ that satisfy the Leibniz rule $d(rs)=d(r)s+rd(s)$.
By a {\em morphism} $(R,M)\to (R'M')$, we mean a pair $(\phi,\psi)$ where $\phi:R\to R'$ is a ring homomorphism and $\psi:M\to M'$ is a group homomorphism that is $R$-linear in the sense that $\psi(rx)=\phi(r)\psi(x)$, for all $r\in R$ and $x\in M$.
Given derivations $d:R\to M$ and $d:R'\to M'$, a morphism $(R,M)\to (R',M')$ is {\em differential} if $\psi(dr)=d\phi(r)$.
In that case, we say that $d:R'\to M'$ {\em extends} $d:R\to M$.

\begin{definition}[Differential triviality]
\label{dtriv}
An $A$-algebra $R$ is said to be {\em differentially trivial} if $R$ admits no nontrivial $A$-linear derivations (with values in any $R$-module).
\end{definition}

Differential triviality ensures that any derivation on $A$ has at most one extension to $R$.
So, for example, (separably) algebraic field extensions are easily seen to be differentially trivial.
Moreover any derivation on a field does lift to algebraic extensions, so we have existence and uniqueness.
If you consider integral extensions of domains instead of algebraic extensions of fields, one does not quite have differential triviality.
Nevertheless, the following approximation holds:

\begin{fact}
\label{alg-derivation}
Suppose $A\subseteq B$ is an extension of integral domains and $b\in B$ is algebraic over $\Frac(A)$ with the minimal polynomial $P$ of $b$ over $\Frac(A)$ having coefficients in $A$.
Suppose $S$ is a $B$-algebra.
Then any derivation $d:A\to S$ extends uniquely to a derivation $A[b]\to S_f$ where $f:=\frac{dP}{dt}(b)$.
\end{fact}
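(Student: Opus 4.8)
The plan is to reduce the whole statement to the presentation $A[b]\cong A[t]/(P)$ and then treat uniqueness and existence separately. At the outset one may assume $f\neq 0$: otherwise $S_f$ is the zero ring, which admits only the zero derivation, and that does (trivially) extend $d$.

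\emph{Step 1: the identification $A[b]\cong A[t]/(P)$.} Let $\rho\colon A[t]\to B$ be evaluation at $b$, so that $A[b]$ is the image of $\rho$ and $(P)\subseteq\ker\rho$ since $P(b)=0$. For the reverse inclusion I would use that $P$ is \emph{monic with coefficients in $A$}: given $g\in\ker\rho$, divide by $P$ in $A[t]$ to write $g=Pq+r$ with $q,r\in A[t]$ and $\deg r<\deg P$; then $r(b)=0$, and minimality of $\deg P$ over $\Frac(A)$ forces $r=0$, so $g\in(P)$. Hence $\rho$ induces an isomorphism $A[t]/(P)\cong A[b]$. I expect this step to be the real point: it is exactly where the hypothesis that $P$ has coefficients in $A$ is used, and without it $\ker\rho$ is not so easy to describe. (For the relaxation alluded to in Remark~\ref{whatf} — dropping that hypothesis when $S$ has no $A$-torsion — one clears denominators to get $Q=eP\in A[t]$ with $Q(b)=0$ and then uses torsion-freeness to see that the derivation built below still descends; I would flag this but not carry it out.)

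\emph{Step 2: uniqueness.} View $S_f$ as an $A[b]$-algebra in the obvious way, so $f$ is a unit in it. If $D\colon A[b]\to S_f$ is a derivation restricting to $d$ on $A$ (followed by $S\to S_f$), then applying $D$ to $P(b)=0$ and using the Leibniz rule together with $D\!\upharpoonright_A=d$ gives
$$0=P^d(b)+\tfrac{dP}{dt}(b)\,D(b)=P^d(b)+f\,D(b),$$
where $P^d$ is obtained from $P$ by applying $d$ to its coefficients; hence $D(b)=-f^{-1}P^d(b)$ is forced. Since $A[b]$ is generated as a ring by $A\cup\{b\}$, a derivation extending $d$ is determined by its value at $b$, so at most one such $D$ exists.

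\emph{Step 3: existence.} Here I would invoke the standard construction of derivations on a polynomial ring: for an $A$-algebra $C$, a $C$-module $N$, a derivation $\delta\colon A\to N$, and elements $c\in C$ and $n\in N$, the formula $\Delta(g):=g^\delta(c)+g'(c)\,n$ defines a derivation $\Delta\colon A[t]\to N$ (with $N$ made an $A[t]$-module via $t\mapsto c$) extending $\delta$ and with $\Delta(t)=n$; additivity and the Leibniz rule are routine. Apply this with $C=A[b]$, $c=b$, $N=S_f$, $\delta=d$ followed by $S\to S_f$, and $n:=-f^{-1}P^d(b)$, the value dictated by Step~2. A one-line computation gives $\Delta(P)=P^d(b)+f\,n=0$, and then the Leibniz rule gives $\Delta(Pg)=\Delta(P)\,\sigma(g)=0$ for all $g\in A[t]$, where $\sigma\colon A[t]\to S_f$ is the structure homomorphism (so $\sigma(P)=P(b)=0$). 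Thus $\Delta$ vanishes on the ideal $(P)$ and descends, through the isomorphism of Step~1, to a derivation $A[b]\to S_f$ extending $d$. Together with Step~2 this yields existence and uniqueness, completing the argument.
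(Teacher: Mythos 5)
Your proof is correct and follows essentially the same route as the paper's: both present $A[b]$ as $A[t]/(P)$ via the monic division algorithm, derive $D(b)=-f^{-1}P^d(b)$ from $P(b)=0$ for uniqueness, and obtain existence by building the derivation on $A[t]$ with $t\mapsto -f^{-1}P^d(b)$ and checking it kills the ideal $(P)$. The only cosmetic difference is the order in which the kernel identification and the vanishing on the ideal are established.
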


\begin{proof}
The statement is meant to be understood with respect to the natural morphism $(A,S)\to(A[b],S_f)$.

Uniqueness follows from the fact that $P(b)=0$ forces $d(b)=-\frac{P^d(b)}{f}$, where $P^d\in S[t]$ is obtained by applying $d$ to the coefficients of $P$.

For existence, first we make $S_f$ an algebra over the polynomial ring $A[t]$ by the evaluation map $t\mapsto b$.
Next we extend $d:A\to S$ to a derivation $d:A[t]\to S_f$ by setting $dt:=-\frac{P^d(b)}{f}$.
Note that the transcendentality of $t$ means this is always possible, and that for any $Q\in A[t]$ we have $dQ=\frac{dQ}{dt}(b)dt+Q^d(b)$.
In particular, $dP=0$.
Let $I\subseteq A[t]$ be the ideal of polynomials vanishing at $b$.
As $P\in A[t]$ is monic, and of minimal degree satisfying $P(b)=0$, the division algorithm for monic divisors in $A[t]$ implies that $I=(P)$.
It follows that $dQ=0$ for all $Q\in I$.
So we get the desired induced $S_f$-valued derivation on $A[b]=A[t]/I$.
\end{proof}

\begin{remark}
The above fact only has content when $f$ is not nilpotent, as otherwise the localisation $S_f$ is the trivial ring.
In fact, it is usually applied when $b$ is separably algebraic over $\Frac(A)$ and $S$ has no $B$-torsion, in which case $f$ is not nilpotent.
\end{remark}

Derivations amalgamate nicely with respect to tensor products:

\begin{fact}
\label{tensor-derivation}
Suppose we have a commuting diagram of ring homomorphism
$$\xymatrix{
&R_1\ar[dr]&\\
A\ar[ur]\ar[dr]&&S\\
&R_2\ar[ur]&
}$$
and derivations $d_1:R_1\to S$ and $d_2:R_2\to S$ that agree on $A$.
Then there is a unique derivation $d:R_1\otimes_AR_2\to S$ extending both $d_1$ and $d_2$.
\end{fact}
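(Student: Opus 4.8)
The plan is the standard construction of the tensor product of two derivations, with care taken that the compatibility hypothesis $d_1\upharpoonright_A=d_2\upharpoonright_A$ is used at exactly one point. Throughout, for $x\in R_1$, $y\in R_2$ or $a\in A$ I also write $x$, $y$, $a$ for the images in $S$ under the structure maps. Uniqueness is immediate: since $R_1\otimes_AR_2$ is generated additively by the simple tensors, and a derivation is determined by additivity together with its values on a set of ring generators, any derivation $d$ extending both $d_1$ and $d_2$ must satisfy
$$d(x\otimes y)=d\big((x\otimes 1)(1\otimes y)\big)=d(x\otimes 1)\,(1\otimes y)+(x\otimes 1)\,d(1\otimes y)=d_1(x)\,y+x\,d_2(y),$$
and so is completely pinned down.

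For existence I would take this formula as a definition. Consider the biadditive map $R_1\times R_2\to S$ sending $(x,y)\mapsto d_1(x)\,y+x\,d_2(y)$. It is $A$-balanced: for $a\in A$, the Leibniz rule for $d_1$ and $d_2$ gives $d_1(ax)\,y+(ax)\,d_2(y)=a\,d_1(x)\,y+d_1(a)\,xy+a\,x\,d_2(y)$ while $d_1(x)\,(ay)+x\,d_2(ay)=a\,d_1(x)\,y+a\,x\,d_2(y)+d_2(a)\,xy$, and these coincide precisely because $d_1(a)=d_2(a)$ -- this is the sole use of the hypothesis. By the universal property of $\otimes_A$ we obtain an additive map $d:R_1\otimes_AR_2\to S$ with $d(x\otimes y)=d_1(x)\,y+x\,d_2(y)$. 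That it extends $d_1$ and $d_2$ is then automatic, since $d_2(1)=0$ forces $d(x\otimes 1)=d_1(x)$ and symmetrically $d(1\otimes y)=d_2(y)$.

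Finally I would check the Leibniz rule for $d$. First on pairs of simple tensors: expanding both $d\big((x\otimes y)(x'\otimes y')\big)$ and $d(x\otimes y)\,(x'\otimes y')+(x\otimes y)\,d(x'\otimes y')$ via the Leibniz rules for $d_1$ and $d_2$, each is seen to equal $x'yy'\,d_1(x)+xyy'\,d_1(x')+xx'y'\,d_2(y)+xx'y\,d_2(y')$. To upgrade this to all of $R_1\otimes_AR_2$, I would use the standard observation that $T:=\{u\in R_1\otimes_AR_2:\ d(uv)=d(u)\,v+u\,d(v)\text{ for all }v\}$ is a subring of $R_1\otimes_AR_2$ (closure under sums and products, and $1\in T$, being short direct computations), where membership of $u$ in $T$ need only be tested against $v$ ranging over simple tensors, since both sides are additive in $v$; as $T$ then contains every simple tensor, and these generate, we conclude $T=R_1\otimes_AR_2$, i.e.\ $d$ is a derivation. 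The only real obstacle here is bookkeeping -- keeping track of which products are formed in $R_1$, in $R_2$, or in $S$ (the mixed ones always in $S$), and being disciplined about invoking $d_1\upharpoonright_A=d_2\upharpoonright_A$ only at the balancing step and nowhere else.
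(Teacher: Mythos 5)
Your proof is correct and follows essentially the same route as the paper: define $d(x\otimes y)=d_1(x)\,y+x\,d_2(y)$, use the agreement of $d_1$ and $d_2$ on $A$ exactly once to descend to the tensor product over $A$, and verify the Leibniz rule. The only cosmetic difference is that the paper first induces a derivation on $R_1\otimes_{\mathbb Z}R_2$ and then checks it kills $a\otimes 1-1\otimes a$, whereas you verify $A$-balancedness directly; you are also more explicit than the paper about the Leibniz verification, which the paper leaves as ``easily seen.''
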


\begin{proof}
Define $D:R_1\times R_2\to S$ by $D(x,y)=(d_1x)y+x(d_2y)$.
This is $\mathbb Z$-bilinear and hence induces an additive map $d:R_1\otimes_{\mathbb Z}R_2\to S$, that is easily seen to be a derivation.
Moreover, for each $a\in A$ we have
$$d(a\otimes 1-1\otimes a)=(d_1a)1+a(d_21)-(d_11)a-1(d_2a)=d_1a-d_2a=0$$
since $d_1$ and $d_2$ agree on $A$.
So we obtain an induced derivation $d:R_1\otimes_AR_2\to S$.

The Leibniz rule ensures uniqueness.
\end{proof}

\bigskip


\end{document}